\theoremstyle{definition}
\newtheorem{definition}{Definition}[section]
\newtheorem{proposition}[definition]{Proposition}
\newtheorem{theorem}[definition]{Theorem}
\newtheorem{lemma}[definition]{Lemma}
\newtheorem{remark}[definition]{Remark}
\def\div{\mbox{div}\,}
\def\<{\mathop{<}}
\def\>{\mathop{>}}
\newcommand{\spt}{\mathrm{spt}\,}
\newcommand{\diam}{\mathrm{diam}\,}
\numberwithin{equation}{section}
\def\XXint#1#2#3{{\setbox0=\hbox{$#1{#2#3}{\int}$}
\vcenter{\hbox{$#2#3$}}\kern-.5\wd0}}
\title[Existence of MCF with transport term and forcing term]{Existence of weak solution for mean curvature flow with transport term and forcing term}
\author[K. Takasao]{Keisuke Takasao\\Department of Mathematics/Hakubi Center, Kyoto University, Kitashirakawa-Oiwakecho Sakyo Kyoto 606-8502, Japan}
\date{}
\email{k.takasao@math.kyoto-u.ac.jp}
\keywords{mean curvature flow, Allen-Cahn equation, phase field method}
\subjclass[2010]{Primary~35K93, Secondary~53C44}
\begin{document}
\maketitle

\vspace{-0.8cm}
\begin{abstract}
We study the mean curvature flow with given non-smooth transport term and forcing term, in suitable Sobolev spaces. We prove the global existence of the weak solutions for the mean curvature flow with the terms, by using the modified Allen-Cahn equation that holds useful properties such as the monotonicity formula.
\end{abstract}
\section{Introduction}
Let $d\geq 2$ and $\Omega$ be the torus, that is, $\Omega := \mathbb{T}^d =(\mathbb{R}/\mathbb{Z})^d$. Assume that $U_t \subset \Omega$ is an open set with a smooth boundary $M_t:=\partial U_t$ for $t\geq 0$. A family $\{ M_t \}_{t\geq 0}$ of hypersurfaces in $\Omega$ is called a mean curvature flow (MCF) with transport term and forcing term if the normal velocity vector $v$ of $M_t$ satisfies the following:
\begin{equation}
v=h+(u\cdot \nu+g)\nu \quad \text{on} \ M_t, \ t>0,
\label{mcf}
\end{equation}
where $u:\Omega \times (0,\infty) \to \mathbb{R}^d$ and $g:\Omega \times (0,\infty) \to \mathbb{R}$ are given functions, $\cdot$ is the inner product in $\mathbb{R}^d$, $h$ and $\nu$ are the mean curvature vector and the inner unit normal vector of $M_t$, respectively. 
In \cite{liu-sato-tonegawa, liu-sato-tonegawa2}, 
they considered the MCF with transport term ($g\equiv 0$) 
to study the incompressible and viscous non-Newtonian two-phase fluid flow
introduced by Liu and Walkington \cite{liu-walkington}.
The MCF with forcing term ($u\equiv 0$) corresponds to 
the crystal growth (see \cite{bcf,giga2006,soner1993}). 

In the case of $u\equiv 0$ and $g\equiv 0$, Brakke~\cite{brakke} defined the general weak solution (Brakke flow) for \eqref{mcf} via the geometric measure theory and proved the global existence. 
Ilmanen~\cite{Ilmanen} also showed the global existence of the Brakke flow by the phase field method.
Recently, Kim and Tonegawa~\cite{kim-tonegawa} showed the global existence of the multi-phase MCF in the sense of the Brakke flow(see also \cite{tonegawa-book}). 
For other weak solutions, it is well-known that \cite{chen-giga-goto} and \cite{evans-spluck1991} proved the existence of the global unique solution in the sense of viscosity solutions. In addition, about the global existence of the MCF, we also mention \cite{almgren-taylor-wang,ilmanen1994,luckhaus-sturzenhecker}.

 In the case of $u\not\equiv 0$ or $g\not\equiv 0$, Liu, Sato and Tonegawa~\cite{liu-sato-tonegawa} proved the global existence of the weak solution for \eqref{mcf} with $g\equiv 0$ in the sense of the Brakke flow as long as the given transport term $u$ belongs to $L^p_{\text{loc}} ((0,\infty) ;(W^{1,p} (\Omega))^d) $ for $p>(d+2)/2$ and $d=2,3$. Takasao and Tonegawa~\cite{takasao-tonegawa} also proved the existence for more general settings, that is, $d\geq 2$ and $u$ belongs to $L^q _{\text{loc}} ((0,\infty) ;(W^{1,p} (\Omega))^d) $ for $q\in (2,\infty)$ and $p\in (dq/2(q-1),\infty)$ ($p\geq 4/3$ in addition if $d=2$). 
 On the other hand, Mugnai and R{\"o}ger \cite{mugnai-roger2011} showed the global existence of the weak solution called $L^2$-flow for \eqref{mcf} with $u\in L_{\text{loc}} ^2 ((0,\infty) ;(L^\infty (\Omega))^d )$ and $g \in L_{\text{loc}} ^2 ((0,\infty) ; L^\infty (\Omega) )$ for $d=2,3$ (see \cite[Section 5.2]{mugnai-roger2011}).
As explained later in this section, the existence of the weak solution
can be expected for $g$ under the same conditions as \cite{takasao-tonegawa}. 
One motivation in this paper is the generalization of the function space of $g$ 
in the existence theorem for \eqref{mcf}.

Let $\varepsilon \in (0,1)$. In \cite{Ilmanen}, to show the existence of the weak solution for \eqref{mcf} with $u\equiv 0 $ and $g\equiv 0$ in the sense of the Brakke flow, the author studied the following Allen-Cahn equation~\cite{allen-cahn}: 
\begin{equation}
\left\{ 
\begin{array}{ll}
\varepsilon \varphi ^{\varepsilon}_t =\varepsilon \Delta \varphi ^{\varepsilon} -\dfrac{W' (\varphi ^{\varepsilon})}{\varepsilon } ,& (x,t)\in \Omega \times (0,\infty),  \\
\varphi ^{\varepsilon} (x,0) = \varphi _0 ^{\varepsilon} (x) ,  &x\in \Omega,
\end{array} \right.
\label{ac0}
\end{equation}
where $W$ is the double-well potential, such as $W(s) =(1-s^2)^2/2$. 

Set 
$d\mu _t ^\varepsilon := 
\frac{1}{\sigma} \Big( \frac{\varepsilon |\nabla \varphi ^\varepsilon (x,t)|^2 }{2} 
+\frac{W(\varphi ^\varepsilon (x,t))}{\varepsilon}\Big) \, dx$ and
$d\tilde \mu _t ^\varepsilon := 
\frac{\varepsilon }{\sigma} |\nabla \varphi ^\varepsilon (x,t)|^2 \, dx$,
where $\sigma = \int _{-1} ^1 \sqrt{2W (s)} \, ds$. 
These measures correspond to the Hausdorff measure $\mathcal{H}^{d-1} \lfloor_{M_t ^\varepsilon}$,
where $M_t ^\varepsilon =\{ x \in \Omega \, | \, \varphi ^\varepsilon (x,t)=0 \}$.
By integration by parts, we have
\begin{equation*}
\begin{split}
\frac{d}{dt} \int _\Omega \phi \, d\mu _t ^\varepsilon  
= \int_{\Omega}  \nabla \phi \cdot h^\varepsilon -\phi | h^\varepsilon |^2 \, d \tilde{\mu}_t ^\varepsilon 
+\int _{\Omega} \phi _t \, d\mu _t ^\varepsilon \quad \text{for any} \ 
\phi \in C_c ^1 (\Omega \times (0,\infty) ; [0,\infty )),
\end{split}
\end{equation*}
where $h^\varepsilon = \frac{ -\Delta \varphi ^{\varepsilon} 
-W' (\varphi ^{\varepsilon})/\varepsilon ^2 }{|\nabla \varphi ^\varepsilon|} \cdot 
\frac{\nabla \varphi ^\varepsilon}{|\nabla \varphi ^\varepsilon|}$.
The vector-valued function $h^\varepsilon$ is the approximation of the mean curvature vector for $M_t ^\varepsilon$.
Formally we obtain the limit $M_t =\lim_{\varepsilon \to 0} M_t ^\varepsilon$ and the following Brakke's inequality(see \cite{Ilmanen}):
\begin{equation}
\begin{split}
\int _{M_t} \phi \, d\mathcal{H}^{d-1} \Big|_{t=t_1} ^{t_2} 
\leq \int _{t_1} ^{t_2} \int_{M_t}  \nabla \phi \cdot h -\phi | h|^2 +\phi _t \, d\mathcal{H}^{d-1}dt 
\end{split}
\label{brakke-ineq}
\end{equation}
for any $0\leq t_1<t_2 <\infty$ and
$\phi \in C_c ^1 (\Omega \times [0,\infty) ; [0,\infty ))$. Note that 
$\int_{M_t} \phi |h|^2 \, d\mathcal{H}^{d-1} \leq \liminf _{\varepsilon \to 0} \int_{\Omega}\phi |h^\varepsilon| ^ 2\, d\mu_t ^\varepsilon$ implies the inequality of
\eqref{brakke-ineq}.
The Brakke flow is the weak solution characterized by \eqref{brakke-ineq}.
If the solution is smooth, 
then the definition of the Brakke flow and the MCF
are equivalent (see \cite[Proposition 2.1]{tonegawa-book}). In addition, for any initial data $M_0$, 
there exists the trivial solution $\{ M_t \} _{t\geq 0}$ defined by $M_t=\emptyset$ for $t>0$.
Therefore, it is necessary to ensure that the weak solution obtained is non-trivial.
One advantage of the existence theorem via \eqref{ac0} 
is that one can prove the existence of non-trivial solutions, 
since $| \{ x\in \Omega \, | \, \lim _{\varepsilon \to 0} \varphi ^\varepsilon (x,t) =1 \} | $
is a $C^{\frac12}$ function with respect to $t$
(see \cite[Proposition 8.3]{takasao-tonegawa}).

The above discussion requires $\lim_{\varepsilon \to 0} \mu _t ^\varepsilon = \lim_{\varepsilon \to 0} \tilde \mu _t ^\varepsilon$ as Radon measures, 
so the following property is important:
\begin{equation}
\int _{\Omega} \left| \frac{\varepsilon |\nabla \varphi ^\varepsilon (x,t)|^2 }{2} - \frac{W(\varphi ^\varepsilon(x,t))}{\varepsilon}\right| \, dx \to 0 \quad \text{as} \ \varepsilon \downarrow 0
\label{vanish}
\end{equation}
for a.e. $t\geq 0$. The property \eqref{vanish} is called the vanishing of the discrepancy measure (see Definition \ref{defmeas} below) and is also important to show the rectifiability 
of the limit measure $\lim _{\varepsilon \to 0} \mu_t ^\varepsilon$ (see \cite[Section 9.3]{Ilmanen})
and the existence of the $L^2$-flow. 
To prove \eqref{vanish}, Ilmanen~\cite{Ilmanen} showed the non-positivity of the discrepancy measure, that is,
\begin{equation}
 \frac{\varepsilon |\nabla \varphi ^\varepsilon (x,t)|^2 }{2} - \frac{W(\varphi ^\varepsilon(x,t))}{\varepsilon} \leq 0 , \quad (x,t) \in \Omega \times [0,\infty),
\label{negativity}
\end{equation}
for \eqref{ac0} under several suitable assumptions.
Using \eqref{negativity}, one can obtain 
an estimate called monotonicity formula, that is,
\begin{equation}
\frac{d}{dt} \int _{\mathbb{R}^d} \rho _{y,s} (x,t) \, d\mu_t ^\varepsilon (x) 
\leq  \int_{\mathbb{R}^d} \frac{\rho _{y,s} (x,t)}{2(s-t)} 
\left( \frac{\varepsilon |\nabla \varphi ^\varepsilon (x,t)|^2 }{2} 
- \frac{W(\varphi ^\varepsilon(x,t))}{\varepsilon}\right) \, dx \leq 0.
\label{intro-monotonicity1}
\end{equation}
Here
\[ \rho _{y,s} (x,t) := \frac{1}{(4\pi (s-t))^{\frac{d-1}{2}}} e^{-\frac{|x-y|^2}{4(s-t)}}, \qquad t<s, \ x,y\in \mathbb{R}^d \]
and, $\varphi ^\varepsilon$ and $\mu _t ^\varepsilon$ are extended periodically to $\mathbb{R}^d$.
The function $\rho$ is called the backward heat kernel. 
Note that 
$\rho$ converges to the Dirac delta function $\delta _y$ for a $(d-1)$-dimensional surface as $t \to s$. 
Assume that $D:=\sup_{\varepsilon \in (0,1)} \mu _0 ^\varepsilon (\Omega)<\infty$. 
The non-positivity \eqref{negativity} and the monotonicity formula \eqref{intro-monotonicity1} implies that 
there exists $C>0$ depending only on $D$ such that
\begin{equation}
\lim _{\delta \downarrow 0} 
\int_{0} ^{s-\delta} \frac{1}{s-t} \int_\Omega \rho _{y,s} (x,t) 
\left| \frac{\varepsilon |\nabla \varphi ^\varepsilon (x,t)|^2 }{2} 
- \frac{W(\varphi ^\varepsilon(x,t))}{\varepsilon}\right| \, dx dt \leq C
\label{intro-monotonicity2}
\end{equation}
for any $(y,s) \in \mathbb{R}^d \times [0,\infty)$. 
Roughly speaking, if \eqref{vanish} does not hold, then the left hand side of \eqref{intro-monotonicity2} 
is unbounded for some $(y,s)$,
since $\int _0 ^s \frac{1}{s-t} \, dt =\infty$.
Therefore \eqref{negativity} is important property in this discussion. 
In this paper, we use the results of \cite[Proposition 4.9]{roger-schatzle} to obtain \eqref{vanish}
(see Theorem \ref{rs} below and note that the result needs $d=2$ or $3$). 
So we do not use this argument in this paper, 
but \eqref{negativity} is still important in the case of $d\geq 4$, and to 
estimate $\int \rho \, d\mu _t ^\varepsilon$
and the upper bound of the density for the measure $\mu _t ^\varepsilon$ 
(see Theorem \ref{theorem-density} below). 

In \cite{liu-sato-tonegawa,takasao-tonegawa}, to  consider the MCF with additional transport term, they studied the following: 
\begin{equation}
\left\{ 
\begin{array}{ll}
\varepsilon \varphi ^{\varepsilon}_t =\varepsilon \Delta \varphi ^{\varepsilon} -\dfrac{W' (\varphi ^{\varepsilon})}{\varepsilon } - \varepsilon u^{\varepsilon} \cdot \nabla \varphi ^\varepsilon ,& (x,t)\in \Omega \times (0,\infty),  \\
\varphi ^{\varepsilon} (x,0) = \varphi _0 ^{\varepsilon} (x) ,  &x\in \Omega,
\end{array} \right.
\label{ac1}
\end{equation}
where $u^\varepsilon$ is the smooth approximation of $u$. In \cite{mugnai-roger2011}, they considered the following Allen-Cahn equation with forcing term:
\begin{equation}
\left\{ 
\begin{array}{ll}
\varepsilon \varphi ^{\varepsilon}_t =\varepsilon \Delta \varphi ^{\varepsilon} -\dfrac{W' (\varphi ^{\varepsilon})}{\varepsilon } - G^\varepsilon ,& (x,t)\in \Omega \times (0,\infty),  \\
\varphi ^{\varepsilon} (x,0) = \varphi _0 ^{\varepsilon} (x) ,  &x\in \Omega,
\end{array} \right.
\label{ac4}
\end{equation}
where $G^\varepsilon$ is smooth and satisfies $\sup _{\varepsilon >0}\int _0 ^T \int _\Omega \varepsilon^{-1} |G^\varepsilon|^2 \, dx dt<\infty$. Let $g^\varepsilon$ be the smooth approximation of $g$. Note that substituting $\varepsilon u^\varepsilon \cdot \nabla \varphi ^\varepsilon +  g^\varepsilon \sqrt{2W(\varphi^\varepsilon)}$ into $G^\varepsilon$, we obtain \eqref{mcf} as $\varepsilon \to 0$ in the sense of $L^2$-flow (see \cite[Section 5.2]{mugnai-roger2011}).

In the case of $u^\varepsilon \not \equiv 0$ or $g^\varepsilon \not \equiv 0$, the property \eqref{negativity} does not hold for \eqref{ac1} and \eqref{ac4}, generally. 
Therefore, the proof of \eqref{vanish} in \cite{Ilmanen} is not applicable 
to \eqref{ac1} or \eqref{ac4}.
To prove \eqref{vanish}, \cite{mugnai-roger2011} used the result of \cite[Proposition 4.9]{roger-schatzle} (see Theorem \ref{rs} below). On the other hand, in \cite{liu-sato-tonegawa,takasao-tonegawa}, they used weaker estimates than \eqref{negativity} to obtain \eqref{intro-monotonicity2} and \eqref{vanish}. However, we can not apply the technique for the case of $g^\varepsilon \not \equiv 0$ directly (see Remark \ref{rem4.2} below). 
Another motivation for this paper is to propose the new phase field method 
that has the property \eqref{negativity} 
even when there are transport term and forcing term.

Let $q^\varepsilon=q^\varepsilon (r)$ be a solution for
\begin{equation}
\frac{\varepsilon (q^{\varepsilon} _{r})^2 }{2}= \frac{W (q ^{\varepsilon})}{\varepsilon },\quad r\in \mathbb{R}, \quad q^\varepsilon (\pm \infty)=\pm 1, \quad q^\varepsilon (0)=0, \ \text{and} \ \quad q^\varepsilon_r (r)>0, \quad r\in \mathbb{R}.
\label{q}
\end{equation}
For example, if $W(s)=(1-s^2)^2/2$, then $q^\varepsilon (r) =\tanh (r/\varepsilon)$ satisfies \eqref{q}.
Set $T>0$. In this paper, we consider the following modified Allen-Cahn equation with transport term and forcing term:
\begin{equation}
\left\{ 
\begin{array}{ll}
\varepsilon \varphi ^{\varepsilon}_t =\varepsilon \Delta \varphi ^{\varepsilon} -\dfrac{W' (\varphi ^{\varepsilon})}{\varepsilon }  -\varepsilon u^{\varepsilon} \cdot \nabla \varphi ^\varepsilon -(g^\varepsilon + L^\varepsilon r^\varepsilon )\sqrt{2W (\varphi ^{\varepsilon})}  ,& (x,t)\in \Omega \times (0,T),  \\
\varphi ^{\varepsilon} (x,0) = \varphi _0 ^{\varepsilon} (x) ,  &x\in \Omega,
\end{array} \right.
\label{ac2}
\end{equation}
where 
\[
L^\varepsilon :=\left( 2 \sup _{(x,t) \in \Omega \times (0,T)} |\nabla u^\varepsilon (x,t)| + \sup _{(x,t) \in \Omega \times (0,T)} |\nabla g^\varepsilon (x,t)|\right)
\] 
and $r^\varepsilon=r^\varepsilon (x,t)$ is given by $\varphi^\varepsilon(x,t) = q^\varepsilon (r ^\varepsilon (x,t))$. Note that if there exists $(x,t)\in \Omega \times (0,T)$ such that $|\varphi ^\varepsilon (x,t)| =1$, then $r^\varepsilon$ is not well-defined. However, that case does not occur under suitable conditions (see Proposition \ref{prop-existence} below). Define 
\[ 
f^\varepsilon := -(u^\varepsilon \cdot \nabla r^\varepsilon) -g^\varepsilon -L^\varepsilon r^\varepsilon .
\]
We remark that by \eqref{q}, the first equation of \eqref{ac2} is equal to
\begin{equation}
\varepsilon \varphi ^{\varepsilon}_t =\varepsilon \Delta \varphi ^{\varepsilon} -\dfrac{W' (\varphi ^{\varepsilon})}{\varepsilon } + f^\varepsilon \sqrt{2W (\varphi ^{\varepsilon})} .
\label{ac3}
\end{equation}
By adding the forcing term $- L^\varepsilon r^\varepsilon \sqrt{2W (\varphi ^{\varepsilon})}$, 
we can obtain \eqref{negativity}, 
because if  the term is added to the phase field method, then an argument similar to that in \cite{Ilmanen} 
(the maximum principle for $w^\varepsilon :=|\nabla r^\varepsilon|^2-1$)
can be used (see Lemma \ref{lem-negative} below).
In addition, the additional term is very small in the framework of the phase field method under several assumptions (see Remark \ref{rem4.3} below). Roughly speaking, the reason is that $r^\varepsilon\approx 0$ near the zero level set of $\varphi ^\varepsilon$.
Therefore we can obtain the monotonicity formula and the convergence of the solutions for \eqref{ac2} to 
the global weak solution for \eqref{mcf}, with $d= 2, 3$, and $u \in L^q _{\text{loc}} ((0,\infty) ;(W^{1,p} (\Omega))^d) $ and $g \in L^q _{\text{loc}} ((0,\infty) ;W^{1,p} (\Omega)) $, where $q\in (2,\infty)$ and $p\in (dq/2(q-1),\infty)$ ($p\geq 4/3$ in addition if $d=2$). The precise statements of the main results are described in Section 3.
The condition $p\in (dq/2(q-1),\infty)$ is natural in the following sense
(same argument is mentioned in \cite{takasao-tonegawa}).
Let $\lambda >0$ and consider the standard parabolic rescaling, that is, $\tilde x =\frac{x}{\lambda}$
and $\tilde t= \frac{t}{\lambda^2}$. The functions $u$ and $g$ correspond to the velocity of $M_t$,
therefore rescaled functions should be $\tilde u(\tilde x,\tilde t) = \lambda u (x,t)$ and
$\tilde g(\tilde x,\tilde t) = \lambda g (x,t)$, since $\frac{\tilde x}{ \tilde t}= \lambda \frac{x}{t}$. 
We compute
\[
\Big( \int _0 ^\infty \Big( \int _{\mathbb{R} ^d} |\nabla w |^p dx \Big)^{\frac{q}{p}} dt \Big)^{\frac{1}{q}}
= \lambda ^{\frac{d}{p} +\frac{2}{q} -2}
\Big( \int _0 ^\infty \Big( \int _{\mathbb{R} ^d} |\nabla_{\tilde x} \tilde{w}|^p d\tilde{x}\Big)^{\frac{q}{p}} d\tilde{t} \Big)^{\frac{1}{q}},
\]
where $w=u$ or $g$.
The condition $p\in (dq/2(q-1),\infty)$ is equivalent to $\frac{d}{p} +\frac{2}{q} -2<0$.
Hence the transport term and forcing term can be regarded as perturbations.

About the phase field method for the MCF, there are a huge number of results and we mention \cite{bronsard-kohn,xchen1992, evans-soner-souganidis, giga2006, rubinstein-sternberg-keller,soner1995,soner1997} and references therein.

The paper is organized as follows. In Section 2, we set our notations and definitions. In Section 3, we explain the main results of this paper.
In Section 4, first we show the non-positivity of the discrepancy measure and the monotonicity formula. Then we prove the upper bound of the density of $\mu _t ^\varepsilon$ (Theorem \ref{theorem-density}) and the existence theorem for \eqref{mcf} (Theorem \ref{theorem-existence}). In Section 5, we explain the several theorems used in this paper as a supplement.

\section{Notation and definitions}
Throughout this paper, we consider the case of $\Omega = \mathbb{T}^d =(\mathbb{R}/\mathbb{Z})^d$. For $r>0$ and $y\in \mathbb{R}^k$ we define $B_r ^k (y):=\{ x\in \mathbb{R}^k \, | \, |x-y|<r \}$. Set $\omega _k :=\mathscr{L}^k (B_1 ^k (0))$. We denote
\[ D(t) := \max \left\{ 1,\mu _t ^\varepsilon (\Omega), \sup _{B_r ^d (x) \subset \Omega}\frac{\mu _t ^\varepsilon (B_r ^d (x))}{\omega_{d-1} r^{d-1}} \right\}, \quad t \in [0,\infty).\]
\begin{definition}\label{defmeas}
Set $\sigma := \int_{-1} ^1 \sqrt{2W(s)} \, ds $. Let $\varphi ^\varepsilon$ be a solution for \eqref{ac2}. We define a Radon measure $\mu _t ^{\varepsilon}$ and $\xi _t ^{\varepsilon}$ by
\begin{equation*}
\mu _t ^{\varepsilon}(\phi) := \frac{1}{\sigma}\int _{\Omega} \phi(x) \Big( \frac{\varepsilon |\nabla \varphi ^{\varepsilon} (x,t)|^2}{2} + \frac{W (\varphi ^{\varepsilon} (x,t))}{\varepsilon} \Big) dx
\end{equation*}
and
\[  
\xi _t ^{\varepsilon}(\phi) := \frac{1}{\sigma}\int _{\Omega} \phi (x) \Big( \frac{\varepsilon |\nabla \varphi ^{\varepsilon} (x,t)|^2}{2} - \frac{W (\varphi ^{\varepsilon} (x,t))}{\varepsilon} \Big) dx
\]
for any $\phi \in C_c (\Omega)$. The measure $\xi_t ^\varepsilon$ is called the discrepancy measure.
\end{definition}

In this paper, we suppose that a function $W$ satisfies the following:
\begin{equation}
W:\mathbb{R} \to [0,\infty) \ \text{is smooth} \quad \text{and} \quad W(\pm 1) = W' (\pm 1) =0.
\label{w1}
\end{equation}
\begin{equation}
\text{For some} \ \alpha_1 \in (-1,1), \ W'<0 \ \ \text{on} \ \ (\alpha_1,1) \quad \text{and} \quad W'>0 \ \ \text{on} \ \ (-1,\alpha_1).
\label{w2}
\end{equation}
\begin{equation}
\text{There exist} \ \alpha_2 \in (0,1) \ \text{and} \ \ \kappa>0, \ \text{such that} \quad W''(s)>0 \ \ \text{for any} \ \ \alpha_2 \leq |s| \leq 1.
\label{w3}
\end{equation}
\begin{equation}
\text{There exists} \ \Cl{const:w}>0 \ \ \text{such that} \quad (q^{-1}(s))^2 W(s) \leq \Cr{const:w} \ \ \text{for any} \ \ |s| <1.
\label{w4}
\end{equation}
Here $q$ is a solution for \eqref{q} with $\varepsilon =1$ and $q^{-1}$ is the inverse function of $q$. For example, $W(s)= (1-s^2)^2/2$ satisfies \eqref{w1}, \eqref{w2}, \eqref{w3},  and \eqref{w4}. We remark that $q(r) = \tanh r$ in the case of $W(s)= (1-s^2)^2/2$. 

Next we recall several definitions and notations from the geometric measure theory and refer to \cite{allard,brakke,federer,giusti,simon,tonegawa-book} for more details. 
For a set $U \subset \Omega$ with finite perimeter, we denote the reduced boundary by $\partial ^\ast U$, and the total variation measure of the distributional derivative $\chi _U$ is denoted by $\|\nabla \chi _ U\|$.
Let $\mu $ be a Radon measure on $\Omega$. We call $\mu $ $k$-rectifiable if $\mu $ is represented by $\mu = \theta \mathcal{H} ^k \lfloor M$, that is, $\int _{\Omega} \eta \, d\mu = \int _M \eta \theta \, d\mathcal{H}^k$ for any $\eta \in C_c (\Omega)$ (see \cite[Section 3.5]{allard} or \cite[Section 15]{simon}), where $M\subset \Omega$ is a $\mathcal{H}^k$-measurable countably $k$-rectifiable set, and $\theta \in L^1 _{loc} (\mathcal{H}^k \lfloor M)$ is a positive valued function $\mathcal{H}^k$-a.e. on $M$. In addition, if $\theta$ is positive and integer-valued $\mathcal{H}^k$-a.e. on $M$ then we call $\mu$ $k$-integral. Especially, if $\theta\equiv 1$, we say $\mu $ has unit density. Let $T$ be a hyper plane in $\mathbb{R}^d$ with $0 \in T$ and $\nu$ be the unit normal vector of $T$. We also use $T$ to denote the orthogonal projection $\mathbb{R}^d\to T$, that is, $T= \text{Id} -\nu \otimes \nu$, where $\text{Id}$ is the identity matrix.

Assume that $M$ is a countably $(d-1)$-rectifiable and $\mathcal{H}^{d-1}$-measurable subset of $\Omega$ and $\theta\in L^1_{loc} (\mathcal{H}^{d-1} (M))$ is a positive function. For a Radon measure $\mu:=\theta \mathcal{H}^{d-1} \lfloor _{M}$, $h$ is called a generalized mean curvature vector if
\[ \int  _{\Omega} \text{div} _{M} \, \Phi \, d\mu =- \int_{\Omega}  h \cdot \Phi \, d\mu \]
holds for any $\Phi \in C_c ^1 (\Omega ; \mathbb{R} ^d)$ (see \cite[Section 2.9]{brakke} or \cite[Section 16]{simon}).

\bigskip

The following definition is similar to the formulation of the Brakke flow~\cite{brakke}:
\begin{definition}[$L^2$-flow~\cite{mugnai-roger2008}]
Let $T>0$ and $\{ \mu _t \}_{t\in (0,T)}$ be a family of Radon measures on $\Omega$. Set $d\mu :=d\mu_t dt$. We call $\{ \mu _t \}_{t\in (0,T)}$ an $L^2$-flow if the following holds:
\begin{enumerate}
\item $\mu _t$ is $(d-1)$-integral and has a generalized mean curvature vector $h \in L^2 (\mu _t;\mathbb{R}^d)$ a.e. $t\in (0,T)$, 
\item and there exist $C>0$ and a vector $v\in L^2 (0,T;(L^2 (\mu _t) )^d)$ such that
\begin{equation}
v(x,t)\perp T_x \mu _t \quad \text{for} \ \mu \text{-a.e.} \ (x,t) \in \Omega \times (0,T)
\label{velo1}
\end{equation}
and
\begin{equation}
\Big| \int _0 ^T \int _{\Omega} (\eta _t + \nabla \eta \cdot v ) \, d\mu_t dt \Big| \leq C \| \eta \|_{\infty}
\label{velo2}
\end{equation}
for any $\eta \in C_c ^1 (\Omega\times (0,T))$. Here $T_x \mu _t $ is the approximate tangent plane of $\mu _t$ at $x$.
\end{enumerate}
In addition, the above vector $v\in L^2 (0,T;(L^2 (\mu _t) )^d)$ is called a generalized velocity vector.
\end{definition}
\begin{remark}
If $\{ \mu _t \}_{t\in (0,T)}$ is an integral Brakke flow, then it is also $L^2$-flow (see\cite[Section 2.5]{bertini-butta-pisante}).
\end{remark}
\section{Main results}
In this paper, first we show the non-positivity of the discrepancy measure and the upper bound of the density for the measure $\mu _t ^\varepsilon$.
\begin{theorem}\label{theorem-density}
Assume that $T>0$, $d\geq 2$, $2<q<\infty$, $p\in[\frac{2d}{d+1} ,\infty) \cap ( \frac{dq}{2(q-1)}, \infty)$, and 
\begin{equation}
0< \gamma<\frac12.
\label{assumption-gamma}
\end{equation}
Suppose that $\varphi ^\varepsilon$ is a classical solution for \eqref{ac2} with $\max_{x\in \Omega}|\varphi _0 ^\varepsilon (x)| <1$ and
\begin{equation*}
 \frac{\varepsilon |\nabla \varphi ^\varepsilon _0 (x)|^2 }{2} - \frac{W(\varphi ^\varepsilon _0(x) )}{\varepsilon} \leq 0 , \quad x \in \Omega,
\end{equation*}
and $u^\varepsilon \in ( C ^\infty (\Omega \times [0,T]) )^d$, $g^\varepsilon \in  C ^\infty (\Omega \times [0,T]) $ with
\begin{equation}
L^\varepsilon = 2 \sup _{(x,t) \in \Omega \times (0,T)} |\nabla u^\varepsilon (x,t)| + \sup _{(x,t) \in \Omega \times (0,T)} |\nabla g^\varepsilon (x,t)| \leq \varepsilon ^{ -\gamma},
\label{eq3.2}
\end{equation}
\begin{equation}
\| u^\varepsilon \|^2 _{L^q ([0,T] ; (W^{1,p} (\Omega))^d)} + \| g^\varepsilon \|^2 _{L^q ([0,T] ; W^{1,p} (\Omega))}<\infty,
\label{eq3.3}
\end{equation}
and there exists $D_0 >0$ such that
\begin{equation}
D(0)\leq D_0.
\end{equation}
Then the following hold:
\begin{enumerate}
\item The non-positivity \eqref{negativity} holds for any $(x,t) \in \Omega \times [0,T)$.
\item There exist $D_1 >0$ and $\epsilon \in (0,1) $ such that
\begin{equation}
\sup _{0\leq	 t \leq T} D(t)\leq D_1, \quad \varepsilon \in (0,\epsilon).
\label{density-ineq}
\end{equation}
\end{enumerate}
\end{theorem}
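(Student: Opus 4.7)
The plan is to handle the two conclusions separately, with part (1) providing the key input for part (2).

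For part (1), the structural reduction is that, since $\varphi^\varepsilon = q^\varepsilon(r^\varepsilon)$ and $q^\varepsilon$ satisfies \eqref{q}, one has the algebraic identity
\[
\frac{\varepsilon |\nabla \varphi^\varepsilon|^2}{2} - \frac{W(\varphi^\varepsilon)}{\varepsilon} = \frac{W(\varphi^\varepsilon)}{\varepsilon}\bigl(|\nabla r^\varepsilon|^2 - 1\bigr),
\]
so \eqref{negativity} reduces to showing $w^\varepsilon := |\nabla r^\varepsilon|^2 - 1 \leq 0$ on $\Omega \times [0,T)$. I would first translate \eqref{ac3} into a PDE for $r^\varepsilon$: using $\sqrt{2W(\varphi^\varepsilon)} = \varepsilon q^\varepsilon_r$ and $W'(\varphi^\varepsilon) = \varepsilon^2 q^\varepsilon_{rr}$, the chain rule gives
\[
r^\varepsilon_t = \Delta r^\varepsilon + \frac{q^\varepsilon_{rr}}{q^\varepsilon_r}\, w^\varepsilon - u^\varepsilon\cdot\nabla r^\varepsilon - g^\varepsilon - L^\varepsilon r^\varepsilon.
\]
Differentiating this in space and invoking the Bochner-type identity $2\nabla r^\varepsilon \cdot \Delta \nabla r^\varepsilon = \Delta w^\varepsilon - 2|D^2 r^\varepsilon|^2$, I would then compute $w^\varepsilon_t - \Delta w^\varepsilon$ and apply the parabolic maximum principle to the perturbation $w^\varepsilon - \delta t$. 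At an interior maximum $(x_0,t_0)$ with $(w^\varepsilon-\delta t)(x_0,t_0) > 0$, the negative contributions $-2L^\varepsilon |\nabla r^\varepsilon|^2$ (from $2\nabla r^\varepsilon\cdot \nabla(-L^\varepsilon r^\varepsilon)$) and $-2|D^2 r^\varepsilon|^2$ must dominate the bad contributions $2|\nabla u^\varepsilon||\nabla r^\varepsilon|^2$ and $2|\nabla g^\varepsilon||\nabla r^\varepsilon|$; splitting $2|\nabla g^\varepsilon||\nabla r^\varepsilon| \leq |\nabla g^\varepsilon|(|\nabla r^\varepsilon|^2+1)$ and invoking the definition of $L^\varepsilon$, the right-hand side becomes strictly negative whenever $|\nabla r^\varepsilon|^2 > 1$, contradicting $w^\varepsilon_t - \Delta w^\varepsilon \geq \delta > 0$ at the maximum. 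The assumptions \eqref{w1}--\eqref{w4} on $W$ are used to verify that the coefficient $(q^\varepsilon_{rr}/q^\varepsilon_r)'$ of $w^\varepsilon$ itself contributes with a sign compatible with this absorption. Letting $\delta \downarrow 0$ then yields $w^\varepsilon \leq 0$.

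For part (2), with \eqref{negativity} in hand I would derive a monotonicity formula in the spirit of \eqref{intro-monotonicity1}, adapted to the presence of transport and forcing terms. Testing the evolution of $\int \rho_{y,s}(x,t)\,d\mu_t^\varepsilon$ against the backward heat kernel, the leading discrepancy integral is non-positive by (1), while the extra error integrals produced by $\varepsilon u^\varepsilon\cdot\nabla\varphi^\varepsilon$ and $(g^\varepsilon + L^\varepsilon r^\varepsilon)\sqrt{2W(\varphi^\varepsilon)}$ are controlled via Cauchy--Schwarz against $d\tilde\mu_t^\varepsilon$ and then H\"older in time combined with the Sobolev embedding for $W^{1,p}$. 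The scaling condition $p \in (dq/2(q-1),\infty)$ is precisely equivalent to $d/p + 2/q - 2 < 0$, which renders the time integrand $\rho_{y,s}$-integrable and delivers an $\varepsilon$-uniform bound
\[
\sup_{t \in [0, T]} \int \rho_{y,s}(x,t) \, d\mu_t^\varepsilon(x) \leq C\bigl(D_0, T, \|u^\varepsilon\|_{L^q W^{1,p}}, \|g^\varepsilon\|_{L^q W^{1,p}}\bigr).
\]
Choosing $s-t \sim r^2$ and $y = x$ in this estimate yields $\mu_t^\varepsilon(B_r^d(x)) \leq C r^{d-1}$, which gives \eqref{density-ineq}.

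The main technical obstacle will be the forcing contribution $L^\varepsilon r^\varepsilon \sqrt{2W(\varphi^\varepsilon)}$ in the monotonicity formula, because $L^\varepsilon$ may blow up like $\varepsilon^{-\gamma}$. The pointwise estimate $(q^{-1}(s))^2 W(s) \leq \Cr{const:w}$ from \eqref{w4} is precisely what saves this term: since $r^\varepsilon = \varepsilon q^{-1}(\varphi^\varepsilon)$, one gets $|r^\varepsilon|\sqrt{2W(\varphi^\varepsilon)} \leq C \varepsilon$, so the forcing contribution picks up an extra factor of $\varepsilon$, and the net power $\varepsilon^{1-\gamma}$ vanishes as $\varepsilon\downarrow 0$ exactly because $\gamma < 1/2$. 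Balancing this cancellation against the monotonicity errors in the critical scaling regime for $(p,q)$, and verifying that the bound propagates uniformly up to time $T$, is the heart of the argument.
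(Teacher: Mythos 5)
Your part (1) matches the paper's Lemma \ref{lem-negative}: the same reduction to $w^\varepsilon := |\nabla r^\varepsilon|^2 - 1$, the same PDE \eqref{eq4.2} for $r^\varepsilon$, and the same maximum-principle closure. One small correction: the coefficient $\frac{2}{\varepsilon}\nabla r^\varepsilon\cdot\nabla h_q$ multiplying $w^\varepsilon$ does not need to carry a favorable sign for the argument to work; the parabolic maximum principle (via the standard $e^{-Kt}$ conjugation or your $-\delta t$ perturbation) needs only that it be bounded on the compact slab $\Omega\times[0,T']$, which holds because $\varphi^\varepsilon$ is classical and $|\varphi^\varepsilon|<1$. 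The structure conditions on $W$ enter through smoothness and boundedness of $h_q$, not through any sign condition on its derivative.

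The gap is in part (2). You assert that Cauchy--Schwarz plus H\"older in time plus the Sobolev embedding produce an $\varepsilon$-uniform bound
\[
\sup_{t\in[0,T]}\int\rho_{y,s}(x,t)\,d\mu_t^\varepsilon(x)\leq C\bigl(D_0,T,\|u^\varepsilon\|,\|g^\varepsilon\|\bigr),
\]
but this is circular as written. The trace estimate needed to control $\int|u^\varepsilon|^2\,d\mu_t^\varepsilon$ and $\int|g^\varepsilon|^2\,d\mu_t^\varepsilon$ against the measure $\mu_t^\varepsilon$ is the Meyers--Ziemer inequality \eqref{mzineq2}, which unavoidably brings in a factor of the density ratio $D(t)$ at the \emph{current} time, not $D_0$. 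What the monotonicity formula actually delivers (Lemma \ref{propmono} combined with Lemma \ref{lem4.7}) is an estimate of the schematic form
\[
\frac{\mu_{t_2}^\varepsilon(B_R(y))}{\omega_{d-1}R^{d-1}}
\leq C\, D(t_1)+C(t_2-t_1)^{\hat p}\bigl(\|u^\varepsilon\|^2_{L^q W^{1,p}}+\|g^\varepsilon\|^2_{L^q W^{1,p}}\bigr)\sup_{t\in[t_1,t_2]}D(t)+o(1),
\]
and the unknown $\sup D$ sits on both sides. The paper breaks this circularity with Lemma \ref{lemma4.8}, which says that if $D$ is multiplied by a fixed factor $\tilde c\geq 2$ over $[t_1,t_2]$, then the time lapse must satisfy $(t_2-t_1)^{\hat p}(\|u^\varepsilon\|^2+\|g^\varepsilon\|^2)\geq c'$; since the $L^q_t W^{1,p}_x$ norms are finite, $D$ can undergo at most $[T/T_b]+1$ such jumps on $[0,T]$, giving $D(t)\leq D_0\tilde c^{[t/T_b]+1}$. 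Your proposal correctly identifies the scaling role of $p>dq/2(q-1)$ and the crucial role of \eqref{w4} in taming $L^\varepsilon r^\varepsilon\sqrt{2W}$, but without the doubling-time/iteration step (or an equivalent small-time-interval Gr\"onwall) the density bound does not close.
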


\bigskip

\begin{remark}
Similar result about the density bound 
has been obtained in \cite{liu-sato-tonegawa,takasao-tonegawa}. 
The difficult part of the proof of the density bound is
the estimate of the positive part of the discrepancy measure.
Therefore, one of the advantages of this paper is that 
the phase field method for \eqref{mcf} with the non-positivity \eqref{negativity} was obtained.
The property is also useful for obtaining the monotonicity formula and the vanishing of 
the discrepancy measure (see Lemma \ref{propmono} below).
In addition, in the case of $g^\varepsilon \not=0$,
it will be difficult to obtain the estimate of the discrepancy measure 
via the phase field method without the additional term  
$- L^\varepsilon r^\varepsilon \sqrt{2W (\varphi ^{\varepsilon})}$ 
(see Remark \ref{rem4.2} below).
\end{remark}
\begin{remark}
For the regularity corresponding to \eqref{eq3.2}, 
\[
\sup _{\Omega \times [0,T]} |  u^\varepsilon | \leq \varepsilon ^{ -\gamma} \quad \text{and} \quad \sup _{\Omega \times [0,T]} | \nabla u^\varepsilon | \leq \varepsilon ^{ -(\gamma+1)}
\]
are assumed in \cite{takasao-tonegawa}, where $\gamma \in (0,\frac12)$.
In Theorem \ref{theorem-density}, the estimate of $\sup _{\Omega \times [0,T]} |  u^\varepsilon | $
is not required. However, the assumption for 
$\sup _{\Omega \times [0,T]} | \nabla u^\varepsilon | $
is stronger than that in \cite{takasao-tonegawa}.
\end{remark}

\begin{remark}
The assumption \eqref{eq3.2} is used to prove that the additional term 
$- L^\varepsilon r^\varepsilon \sqrt{2W (\varphi ^{\varepsilon})}$ converges to $0$ (see Remark \ref{rem4.3}), 
and \eqref{eq3.3} is mainly necessary for the $L^2$-estimates of transport term and forcing term (see Lemma \ref{lemma4.6} and Lemma \ref{lem4.7}).
\end{remark}

\bigskip

Set 
\[
  v^\varepsilon = \begin{cases}
    \frac{-\varphi^\varepsilon _t}{|\nabla\varphi^\varepsilon|} \frac{\nabla\varphi^\varepsilon}{|\nabla\varphi^\varepsilon|} & \text{if} \ |\nabla\varphi^\varepsilon|\not=0 ,\\
   \qquad 0 & \text{otherwise}.
  \end{cases}
\]
Let $\Psi _\delta \in C_c ^\infty (B_\delta (0))$ be the Dirac sequence, and $\{ \delta _i \} _{i=1} ^\infty$ 
and $\{ T_i \}_{i=1} ^\infty$ be positive sequences with $\delta _i \to 0$ and 
$T _i \to \infty$ as $i\to \infty$, respectively. 
For $\gamma \in (0,\frac12)$, $u \in L^q _{loc} ([0,\infty) ; (W^{1,p} (\Omega))^d)$, and $g \in L^q _{loc}  ([0,\infty) ; W^{1,p} (\Omega))$,
we choose a positive sequence $\{ \varepsilon _i \} _{i=1} ^\infty $ such that 
$\varepsilon _i \to 0$, 
\begin{equation}
\sup _{\Omega \times [0,T_i]} | \nabla u^{\varepsilon_i} | \leq \varepsilon _i ^{ -\gamma}, 
\quad \text {and} 
\quad \sup _{\Omega \times [0,T_i]} | \nabla g^{\varepsilon_i} | \leq \varepsilon_i ^{ -\gamma} 
\qquad \text{for any} \ i\geq 1,
\label{delta-conv}
\end{equation}
where $u^{\varepsilon_i} :=\Psi _{\delta _i } \ast u$, 
and $g^{\varepsilon _i} :=\Psi _{\delta _i } \ast g$. Note that
\[
u^{\varepsilon_i}  \to u \quad \text{in} \ L^q _{loc} ([0,\infty) ; (W^{1,p} (\Omega))^d)
 \quad \text{and} \quad
g^{\varepsilon_i}  \to g \quad \text{in} \ L^q _{loc}  ([0,\infty) ; W^{1,p} (\Omega)).
\]
 For the solution $\varphi ^{\varepsilon_i}$ for \eqref{ac2} with $\varepsilon =\varepsilon_i$ and $T=T_i$, we define $\varphi ^{\varepsilon _i} (x,t) = 1$ if $t \geq T_i$, for the following theorem.
By using Theorem \ref{theorem-density}, we show the vanishing of the discrepancy measure and the existence of the weak solution for \eqref{mcf}:

\begin{theorem}\label{theorem-existence}
Let $d=2,3$ and $u \in L^q _{loc} ([0,\infty) ; (W^{1,p} (\Omega))^d)$ and $g \in L^q _{loc}  ([0,\infty) ; W^{1,p} (\Omega))$. 
Let $\{ \delta _i \} _{i=1} ^\infty$, $\{ \varepsilon _i \} _{i=1} ^\infty$ and $\{ T_i \} _{i=1} ^\infty$ 
be positive sequences such that \eqref{delta-conv} holds.
Assume that for any $i\geq 1$ all assumptions of Theorem \ref{theorem-density} hold with 
$\varepsilon =\varepsilon _i$, $T=T_i$.
Then there exists a subsequence (we denote $\varepsilon_{i_j}$ by $\varepsilon$ for simplicity) and the following hold:
\begin{enumerate}
\item There exists a family of $(d-1)$-integral Radon measures $\{ \mu _t \}_{t\in [0,\infty)}$ on $\Omega$ such that
\begin{enumerate}
\item[(1a)] $\mu ^\varepsilon \to \mu$ as Radon measures on $\Omega \times [0,\infty)$, where $d\mu =d\mu_t dt$.
\item[(1b)] $\mu ^\varepsilon _t \to \mu _t$ as Radon measures on $\Omega$ for all $t \in [0,\infty)$.
\end{enumerate}
\item There exists $\psi \in BV _{loc}( \Omega \times [0,\infty)) \cap C^{\frac{1}{2}} _{loc} ([0,\infty) ;L^1 (\Omega))$ such that
\begin{enumerate}
\item[(2a)] $\varphi ^{\varepsilon} \to 2\psi -1 \ \ \text{in} \ L^1 _{loc} ( \Omega \times [0,\infty))$ and a.e. pointwise.
\item[(2b)] $\psi =0$ or $1$ a.e. on $\Omega \times [0,\infty)$.
\item[(2c)] $\| \nabla \psi (\cdot,t) \| (\phi) \leq \mu _t (\phi) $ for any $t\in [0,\infty)$ and $\phi \in C_c (\Omega;[0,\infty))$. Moreover $\spt\| \nabla \psi (\cdot,t) \| \subset \spt \mu _t  $ for any $t\in [0,\infty)$.
\end{enumerate}
\item $\xi ^\varepsilon _t \to 0$ as Radon measures on $\Omega$ for a.e. $t \in [0,\infty)$.
\item For any $\Phi \in C_c (\Omega \times[0,\infty);\mathbb{R}^d)$ we have
\begin{equation*}
\begin{split}
\lim _{\varepsilon \to 0} \frac{1}{\sigma}\int _{\Omega \times (0,\infty)} u^\varepsilon \cdot \Phi  \, \varepsilon |\nabla \varphi^\varepsilon|^2dxdt = \int _{\Omega \times (0,\infty)} u \cdot \Phi  \, d\mu.
\end{split}
\label{weakconv1}
\end{equation*}
\item There exists a vector valued function $\tilde g \in L^2 _{loc} (0,\infty ; (L^2 (\mu _t))^d)$ such that
\begin{equation*}
\begin{split}
\lim _{\varepsilon \to 0} \frac{1}{\sigma}\int _{\Omega \times (0,\infty)} g^\varepsilon \sqrt{2W(\varphi^\varepsilon)} \nabla \varphi^\varepsilon \cdot \Phi  \, dxdt = \int _{\Omega \times (0,\infty)} \tilde g \cdot \Phi  \, d\mu
\end{split}
\label{weakconv2}
\end{equation*}
for any $\Phi \in C_c (\Omega \times[0,\infty);\mathbb{R}^d)$.
\item $\{ \mu _t \}_{t\in (0,\infty)}$ is an $L^2$-flow with a generalized velocity vector 
\begin{equation}
v(x,t)=h(x,t)+(\text{Id} -T_{x} \mu _t ) u(x,t) + \tilde g(x,t),
\label{eq3.6}
\end{equation}
where $h$ is the generalized mean curvature vector of $\mu_t$, $T_{x} \mu _t$ is the approximate tangent plane of $\mu _t$ at $x$, and
\begin{equation}
\lim _{\varepsilon \to 0} \int _{\Omega \times (0,\infty)}  v^\varepsilon  \cdot \Phi \, d\mu ^\varepsilon = \int _{\Omega \times (0,\infty)} v \cdot \Phi \, d\mu
\label{conv-v}
\end{equation}
for any $\Phi \in C_c (\Omega \times[0,\infty);\mathbb{R}^d)$. 
Moreover $\spt \tilde g \subset \partial ^{\ast} \{ \psi =1 \}$ and there exists a measurable function $\theta : \partial ^{\ast} \{ \psi =1 \}\to \mathbb{N}$ such that 
\begin{equation}
\tilde g =\frac{1}{\theta} g \nu \qquad \mathcal{H}^{d}\text{-a.e. on} \ \partial ^{\ast} \{ (x,t) \, | \, \psi (x,t) =1 \},
\label{eq:3.8}
\end{equation}
where $\nu (\cdot, t)$ is the inner unit normal vector of $\{ \psi(\cdot,t) =1 \}$ on $\partial ^{\ast} \{ \psi(\cdot,t) =1 \}$.
\end{enumerate}
\end{theorem}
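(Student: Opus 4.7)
The plan is to build the limit measure $\mu$ from the modified Allen--Cahn solutions using the uniform density estimate of Theorem \ref{theorem-density}, then verify the six structural properties in turn.

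\textbf{Compactness and the limit phase.} The bound $\sup_{\varepsilon,\,t\in[0,T_i]}\mu_t^\varepsilon(\Omega) \leq D_1$ produces, via a diagonal subsequence, a Radon limit $\mu$ on $\Omega\times[0,\infty)$ with disintegration $d\mu = d\mu_t\,dt$, giving (1a). For (1b), the monotonicity formula (Lemma \ref{propmono}) together with the non-positivity of $\xi_t^\varepsilon$ (Lemma \ref{lem-negative}) yields enough equicontinuity in $t$ of $\mu_t^\varepsilon(\phi)$ for each $\phi\in C_c(\Omega)$ to extract a further subsequence converging for every $t$. The phase limit (2) follows from the $L^1$-bound $\int W(\varphi^\varepsilon)/\varepsilon\,dx\,dt \leq \sigma D_1 T_i$, which forces $\varphi^\varepsilon \to \pm 1$ a.e., and from a BV bound on $G(\varphi^\varepsilon) = \int_0^{\varphi^\varepsilon}\sqrt{2W(s)}\,ds$ by Cauchy--Schwarz against $\mu_t^\varepsilon$; the H{\"o}lder-$\tfrac12$ continuity in (2) comes from testing \eqref{ac2} against a spatial mollifier and using the $L^2$-estimate on $\sqrt\varepsilon\,\varphi_t^\varepsilon$, as in \cite[Proposition 8.3]{takasao-tonegawa}, and (2c) from lower semicontinuity.

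\textbf{Vanishing discrepancy and limits of $u^\varepsilon, g^\varepsilon$.} For (3) I apply Theorem \ref{rs} (R{\"o}ger--Sch{\"a}tzle, the reason for restricting to $d=2,3$); its hypothesis, an $L^2$-bound on $\varepsilon\Delta\varphi^\varepsilon - W'(\varphi^\varepsilon)/\varepsilon$, is verified from \eqref{ac2} by multiplying by $\varphi_t^\varepsilon$ to control $\sqrt\varepsilon\,\varphi_t^\varepsilon$ in $L^2$, then using \eqref{eq3.3} with Lemmas \ref{lemma4.6}, \ref{lem4.7} for the transport and forcing contributions, and Remark \ref{rem4.3} for the $L^\varepsilon r^\varepsilon$ correction. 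This gives (3) and the $(d-1)$-integrality of $\mu_t$. Statement (4) then follows from the identity $\tilde\mu_t^\varepsilon = \mu_t^\varepsilon - \sigma^{-1}\xi_t^\varepsilon$ combined with the strong convergence $u^\varepsilon \to u$ in $L^q_{\text{loc}}(W^{1,p})$. For (5), a uniform $L^1$-bound on $g^\varepsilon\sqrt{2W(\varphi^\varepsilon)}\,\nabla\varphi^\varepsilon$ via Cauchy--Schwarz and \eqref{eq3.3}, together with weak-$*$ compactness, yields a limit vector measure; since $\sqrt{2W(\varphi^\varepsilon)}\,\nabla\varphi^\varepsilon = \nabla G(\varphi^\varepsilon) \to \sigma\nabla\chi_{\{\psi=1\}}$ in distributions, this limit is supported on $\partial^*\{\psi=1\}$, and the representation \eqref{eq:3.8} with the factor $1/\theta$ is obtained by comparing $\mu_t = \theta\,\mathcal H^{d-1}\lfloor M_t$ with the unit-multiplicity $\|\nabla\chi_{\{\psi(\cdot,t)=1\}}\| = \mathcal H^{d-1}\lfloor\partial^*\{\psi(\cdot,t)=1\}$.

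\textbf{$L^2$-flow.} For (6), $v^\varepsilon$ is by construction normal to the level sets of $\varphi^\varepsilon$, and from \eqref{ac2} one reads off $v^\varepsilon\cdot\nu^\varepsilon = h^\varepsilon\cdot\nu^\varepsilon + u^\varepsilon\cdot\nu^\varepsilon + g^\varepsilon + L^\varepsilon r^\varepsilon$ with $\nu^\varepsilon = \nabla\varphi^\varepsilon/|\nabla\varphi^\varepsilon|$. Integrating $v^\varepsilon\cdot\Phi$ against $d\mu^\varepsilon$, the mean-curvature term passes to the limit by the Ilmanen-type varifold argument combined with (3); the transport term passes by (4), with only the normal component $(u\cdot\nu)\nu = (\mathrm{Id}-T_x\mu_t)u$ surviving because $v^\varepsilon\parallel\nu^\varepsilon$; and the forcing term passes by (5), giving \eqref{eq3.6} and \eqref{conv-v}. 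Normality \eqref{velo1} is inherited from $v^\varepsilon\parallel\nu^\varepsilon$, and \eqref{velo2} follows by Cauchy--Schwarz using the $L^2$-energy bound on $v^\varepsilon$. \textbf{The main obstacle} is the forcing limit: identifying $\tilde g$ and the multiplicity factor $1/\theta$ in \eqref{eq:3.8} requires reconciling the unit-multiplicity BV-limit of $G(\varphi^\varepsilon)$ with the possibly higher integer multiplicity of $\mu_t$, and showing $\tilde g\in L^2(\mu_t)$ demands sharp use of \eqref{eq3.3}, the density bound (5), and lower semicontinuity under weak convergence of measures.
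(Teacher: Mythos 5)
Your proposal captures the correct technical ingredients for items (3)--(6): R\"oger--Sch\"atzle (Theorem~\ref{rs}) for the vanishing discrepancy and integrality, the antiderivative $k(s)=\int_0^s\sqrt{2W}$ with the BV structure for the forcing term, the Radon--Nikodym derivative $\frac{d\|\nabla\psi(\cdot,t)\|}{d\mu_t}$ for the multiplicity factor $1/\theta$, and the varifold convergence $\int(\mathrm{Id}-\nu^\varepsilon\otimes\nu^\varepsilon)\Psi\,d\mu_t^\varepsilon\to\int T_x\mu_t\,\Psi\,d\mu_t$ for the transport piece. These match the paper. The significant structural divergence is that the paper does not construct the $L^2$-flow by hand: after verifying the uniform bound $\sup_{\varepsilon}\int_0^T\int_\Omega \varepsilon^{-1}|G^\varepsilon|^2\,dxdt<\infty$ (which follows from Lemma~\ref{lemma4.6} and \eqref{density-ineq}), it invokes Theorem~\ref{mr} (Mugnai--R\"oger) wholesale. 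That theorem immediately delivers (1), the $(d-1)$-integrality, the $L^2$-flow structure $v=h+\vec G$, the normality \eqref{velo1}, the bound \eqref{velo2}, and the convergence \eqref{conv-v}; the paper's remaining task is only to decompose $\vec G$ into $(\mathrm{Id}-T_x\mu_t)u+\tilde g$, establish \eqref{eq:3.8}, and show the correction term vanishes (\eqref{vanish-L}). Your sketch attempts to re-derive the $L^2$-flow scaffolding from scratch (equicontinuity for (1b), normality of $v$, the $L^2$-energy bound, lower semicontinuity for $h$); this is in principle the same content that goes into Mugnai--R\"oger, but it is substantially more work than the sketch acknowledges, and the monotonicity formula alone does not directly give the uniform-in-$t$ convergence $\mu_t^\varepsilon\to\mu_t$ you claim --- you would need the near-monotonicity of $t\mapsto\mu_t^\varepsilon(\phi)$ furnished by the energy estimate of Lemma~\ref{lem:4.5}, which is exactly what Mugnai--R\"oger exploit. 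Two small inaccuracies worth flagging: the identity is $\tilde\mu_t^\varepsilon=\mu_t^\varepsilon+\xi_t^\varepsilon$, not $\mu_t^\varepsilon-\sigma^{-1}\xi_t^\varepsilon$ (both measures as defined in the paper already carry the factor $\sigma^{-1}$); and reading off the normal velocity from \eqref{ac2} gives $v^\varepsilon\cdot\nu^\varepsilon=h^\varepsilon\cdot\nu^\varepsilon+u^\varepsilon\cdot\nu^\varepsilon+(g^\varepsilon+L^\varepsilon r^\varepsilon)\frac{\sqrt{2W(\varphi^\varepsilon)}}{\varepsilon|\nabla\varphi^\varepsilon|}$, not simply $\cdots+g^\varepsilon+L^\varepsilon r^\varepsilon$, though the extra factor tends to $1$ where the equipartition holds.
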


\begin{remark}
The assumption for $d$ comes from Theorem \ref{rs}.
In the case of $d\geq 4$, then we may need several arguments similar to that in \cite{Ilmanen,takasao-tonegawa}.
The term $(\text{Id} -T_{x} \mu _t ) u$ corresponds to $(u\cdot \nu) \nu$ 
if $\mu _t$ is given by a smooth hypersurface.
\end{remark}

\begin{remark}\label{rem3.7}
In \cite[Section 5.2]{mugnai-roger2011}, they showed the existence theorem with \\
$u\in L_{\text{loc}} ^2 ((0,\infty) ;(L^\infty (\Omega))^d )$ and
 $g \in L_{\text{loc}} ^2 ((0,\infty) ; L^\infty (\Omega) )$ for $d=2,3$.
As mentioned in Section 1, natural function spaces are considered in Theorem \ref{theorem-density} and 
Theorem \ref{theorem-existence}.

In the case of $g\equiv 0$, the existence of the weak solution for \eqref{mcf} in the sense of Brakke flow with $u \in L^q _{loc} ([0,\infty) ; (W^{1,p} (\Omega))^d)$ and $d\geq 2$ has already been proven in \cite{takasao-tonegawa}. 
Here, a family of $(d-1)$-integral Radon measures $\{ \mu _t \} _{t\in [0,\infty)}$ is called a Brakke flow with transport term $u$ if
\[
\int _{\Omega} \phi \, d\mu _t \Big| _{t=t_1} ^{t_2}
\leq 
\int_{t_1} ^{t_2} \int _{\Omega} (\nabla \phi -\phi h) \cdot (h +(u\cdot \nu) \nu)+\phi _t \, d\mu _t dt 
\]
holds for any $\phi \in C_c ^1 (\Omega\times [0,\infty) ; [0,\infty))$. 
Note that the regularity of the Brakke flow is also known (see \cite{kasai-tonegawa,tonegawa2014}). 
The main differences of the phase field methods between \cite{takasao-tonegawa} and this paper are 
having or not having the proofs of the estimates of the positive part of the discrepancy measure, and the additional forcing term $- L^\varepsilon r^\varepsilon \sqrt{2W (\varphi ^{\varepsilon})}$. Because the term is very small in the sense of the Brakke flow (see Remark \ref{rem4.3}), 
it is expected that same existence theorem of the Brakke flow in \cite{takasao-tonegawa} ($d\geq 2$) will be obtained 
via the phase field model \eqref{ac2}. 
In addition, \eqref{negativity} would make it easier to prove the vanishing of the discrepancy measure
than that in \cite{takasao-tonegawa}.

However, in the case of $g \not \equiv 0$, it is difficult to consider the weak solution for \eqref{mcf} in the sense of the Brakke flow, since weak convergences of $\nu^\varepsilon$ and $h^\varepsilon$ are insufficient to make sense of the convergence
\[
\int\phi  g^\varepsilon \nu ^\varepsilon \cdot h^\varepsilon \, d\tilde \mu _t ^\varepsilon \to \int \phi g \nu  \cdot h \, d\mu _t \quad \text{for any} \ \phi \in C_c (\Omega \times [0,\infty)),
\]
where $\nu ^\varepsilon =\frac{\nabla \varphi ^\varepsilon}{|\nabla \varphi ^\varepsilon|}$, $h^\varepsilon = \frac{-\Delta \varphi ^\varepsilon +\frac{W'( \varphi ^\varepsilon)}{\varepsilon} }{|\nabla \varphi ^\varepsilon|} \nu ^\varepsilon$, and $d\tilde \mu _t ^\varepsilon=\frac{\varepsilon}{\sigma} |\nabla \varphi ^\varepsilon|^2 dx$. In particular, when $\mu _t$ is not a unit density measure, the treatment of the orientation of $\nu$ is a problem.
On the other hand, this problem does not occur when $L^2$-flow is considered, because 
the computation of the inner product is not necessary in the definition of the $L^2$-flow and
the characterization of the generalized velocity \eqref{eq3.6}.
\end{remark}

\begin{remark}
Regarding energy estimates, there is no difference in the handling of transport term and forcing term. 
However, regarding convergence, the forcing term converges with respect to 
the measure $\| \nabla \psi (\cdot,t)\|$ (see \eqref{eq4.34}). The function $\theta$ in \eqref{eq:3.8} is the inverse
of the Radon-Nikodym Derivative $\frac{d\| \nabla \psi (\cdot,t) \|}{d\mu_t}$.

\end{remark}

\section{Proof of main theorems}
In this section, we assume all the assumptions of Theorem \ref{theorem-density}. First we prove the well-posedness of the phase field model \eqref{ac2}. Next we show the monotonicity formula via the arguments in \cite{Ilmanen} and the upper bound of the density of $\mu_t ^\varepsilon$ by using the arguments in \cite{liu-sato-tonegawa,takasao-tonegawa}. The upper bound estimates, Theorem \ref{mr}, and standard measure theoretic arguments imply the existence theorem.

\subsection{Well-posednes of \eqref{ac2} }\mbox{}\\[0.2cm]
Let $\delta \in (0,1)$ and $r^\varepsilon _\delta : \mathbb{R} \to \mathbb{R}$ be a $C^\infty$ function such that 
\[
  r^\varepsilon _\delta ( s ) = 
  \begin{cases}
    (q^\varepsilon)^{-1} (-1+\delta) -1 & \text{if} \ s<-1,\\
    (q^\varepsilon)^{-1} (s)  & \text{if} \ s\in [-1+\delta , 1-\delta] ,\\
     (q^\varepsilon)^{-1} (1-\delta) +1 & \text{if} \ s>1.
  \end{cases}
\]

From the definition of $r^\varepsilon$ in \eqref{ac2}, we need the a priori estimate $\varphi ^\varepsilon (x,t) \in (-1,1)$
for any $(x,t) \in \Omega \times [0,T)$. Therefore first we consider the following modified equation:
\begin{equation}
\left\{ 
\begin{array}{ll}
\varepsilon \varphi ^{\varepsilon}_t =\varepsilon \Delta \varphi ^{\varepsilon} -\dfrac{W' (\varphi ^{\varepsilon})}{\varepsilon }  -\varepsilon u^{\varepsilon} \cdot \nabla \varphi ^\varepsilon -(g^\varepsilon + L^\varepsilon r^\varepsilon _\delta (\varphi ^\varepsilon) )\sqrt{2W (\varphi ^{\varepsilon})}  ,& (x,t)\in \Omega \times (0,T),  \\
\varphi ^{\varepsilon} (x,0) = \varphi _0 ^{\varepsilon} (x) ,  &x\in \Omega.
\end{array} \right.
\label{ac5}
\end{equation}

The estimate $\varphi ^\varepsilon  \in (-1,1)$ can be obtained as follows from the maximum principle.

\begin{lemma}\label{prop4.1}
Let $T>0$ and $a \in (0,1)$. Then there exists $\delta \in (0,1)$ such that the following hold: Let $\varphi ^\varepsilon$ be a classical solution for \eqref{ac5} with $\delta>0$ and
$\max _{x\in \Omega} |\varphi ^\varepsilon _0 (x) | \leq 1-a$. Then
$\sup _{(x,t) \in \Omega \times [0,T)} |\varphi ^\varepsilon (x,t)| \leq 1-\delta$.
Moreover, $\varphi ^\varepsilon$ is also a solution for \eqref{ac2} in $\Omega \times [0,T)$.
\end{lemma}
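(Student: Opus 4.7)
My plan is to transfer the a priori bound from $\varphi^\varepsilon$ to the auxiliary variable $r^\varepsilon := (q^\varepsilon)^{-1}(\varphi^\varepsilon)$ -- in which the new term $-L^\varepsilon r^\varepsilon \sqrt{2W(\varphi^\varepsilon)}$ in \eqref{ac5} becomes a linear damping $-L^\varepsilon r^\varepsilon$ -- apply the scalar maximum principle to obtain a bound on $r^\varepsilon$ that is \emph{independent of} $\delta$, and then pick $\delta$ small enough that the induced bound on $\varphi^\varepsilon = q^\varepsilon(r^\varepsilon)$ falls strictly inside $(-1+\delta, 1-\delta)$. Existence of a classical solution to \eqref{ac5} for every $\delta \in (0,1)$ is standard: because $r^\varepsilon_\delta$ is $C^\infty$ and bounded on $\mathbb{R}$ by construction, the right-hand side of \eqref{ac5} is a smooth semilinear nonlinearity with smooth coefficients inherited from $u^\varepsilon, g^\varepsilon$, and quasi-linear parabolic theory yields a unique $\varphi^\varepsilon \in C^{2,1}(\Omega \times [0,T])$.

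On an open subset of $\Omega \times [0,T)$ where $\varphi^\varepsilon \in (-1+\delta, 1-\delta)$ the truncation is inactive, so $r^\varepsilon_\delta(\varphi^\varepsilon) = r^\varepsilon$; differentiating $\varphi^\varepsilon = q^\varepsilon(r^\varepsilon)$ together with the identities $q^\varepsilon_r = \sqrt{2W(q^\varepsilon)}/\varepsilon$ and $q^\varepsilon_{rr} = W'(q^\varepsilon)/\varepsilon^2$ from \eqref{q} converts \eqref{ac5} into
\begin{equation*}
 r^\varepsilon_t = \Delta r^\varepsilon + \frac{q^\varepsilon_{rr}}{q^\varepsilon_r} \bigl( |\nabla r^\varepsilon|^2 - 1 \bigr) - u^\varepsilon \cdot \nabla r^\varepsilon - g^\varepsilon - L^\varepsilon r^\varepsilon.
\end{equation*}
At an interior space-maximum of $r^\varepsilon(\cdot,t)$ the $\nabla r^\varepsilon$ and $\Delta r^\varepsilon$ terms drop out favourably, and $|q^\varepsilon_{rr}/q^\varepsilon_r| = |W'(q^\varepsilon)|/(\varepsilon \sqrt{2W(q^\varepsilon)}) \leq K_0/\varepsilon$ with $K_0$ a uniform bound for $|W'|/\sqrt{2W}$ on $(-1,1)$, finite by \eqref{w1}--\eqref{w3}. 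A standard ODE comparison on $M(t) := \max_x r^\varepsilon(\cdot, t)$ therefore gives
\begin{equation*}
 |r^\varepsilon(x,t)| \leq M_* := \max\Bigl\{ (q^\varepsilon)^{-1}(1-a),\ \tfrac{K_0/\varepsilon + \|g^\varepsilon\|_\infty}{L^\varepsilon} \Bigr\},
\end{equation*}
and the crucial observation is that $M_*$ depends on $a, \varepsilon, u^\varepsilon, g^\varepsilon$ but not on $\delta$.

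The bound $|r^\varepsilon| \leq M_*$ translates to $|\varphi^\varepsilon| \leq q^\varepsilon(M_*) < 1$. Pick any $\delta \in (0, \min\{a,\, 1 - q^\varepsilon(M_*)\})$ and set $A := \{(x,t) \in \Omega \times [0,T) : |\varphi^\varepsilon(x,t)| < 1-\delta\}$, which contains $\Omega \times \{0\}$ since $\delta < a$. An open-closed argument closes the loop: were $A \neq \Omega \times [0,T)$, at the first contact time $t_0$ there would exist $x_0$ with $|\varphi^\varepsilon(x_0, t_0)| = 1-\delta$, but on $[0, t_0)$ the $r^\varepsilon$-equation above is valid globally, so $|r^\varepsilon| \leq M_*$ persists up to $t_0$ and forces $|\varphi^\varepsilon(x_0, t_0)| \leq q^\varepsilon(M_*) < 1-\delta$, a contradiction. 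Thus $A = \Omega \times [0,T)$ and $r^\varepsilon_\delta(\varphi^\varepsilon) = (q^\varepsilon)^{-1}(\varphi^\varepsilon)$ everywhere, so $\varphi^\varepsilon$ also solves \eqref{ac2}. The principal subtlety is exactly this circular interplay between where the $r^\varepsilon$-equation is valid and the bound derived from it; it is unlocked precisely because $M_*$ is independent of the truncation parameter, which in turn rests on the uniform bound on $|W'|/\sqrt{2W}$ near the wells of $W$.
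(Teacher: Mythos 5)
Your overall strategy is the same as the paper's: pass to $r^\varepsilon = (q^\varepsilon)^{-1}(\varphi^\varepsilon)$, convert \eqref{ac5} into a semilinear equation for $r^\varepsilon$, and apply the scalar maximum principle to obtain a $\delta$-independent bound $|r^\varepsilon|\leq M_*$, from which the choice $\delta < 1-q^\varepsilon(M_*)$ follows. The paper's version is a contradiction argument at the first time $T_1$ where $|\varphi^\varepsilon|$ touches $1$; yours is a first-touch/open-closed argument at the level $1-\delta$; these are essentially interchangeable. The computation $q^\varepsilon_{rr}/q^\varepsilon_r = W'(q^\varepsilon)/(\varepsilon\sqrt{2W(q^\varepsilon)})$ and the observation that $K_0 := \sup_{|s|<1}|W'(s)|/\sqrt{2W(s)}$ is finite by \eqref{w1}--\eqref{w3} are both correct and match the paper's $\max_{|s|\leq 1}|h_q(s)|$.

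There is, however, a genuine gap in the bound itself. You keep the term $-L^\varepsilon r^\varepsilon$ as a damping and write
$M_* = \max\bigl\{(q^\varepsilon)^{-1}(1-a),\ (K_0/\varepsilon+\|g^\varepsilon\|_\infty)/L^\varepsilon\bigr\}$,
stressing that $M_*$ is independent of $\delta$. But nothing in the hypotheses of Lemma~\ref{prop4.1} forces $L^\varepsilon > 0$: by definition $L^\varepsilon = 2\sup|\nabla u^\varepsilon| + \sup|\nabla g^\varepsilon|$, which vanishes whenever $u^\varepsilon$ and $g^\varepsilon$ are spatially constant. In that case your $M_*$ is a division by zero, and the ODE differential inequality $\dot M \leq K_0/\varepsilon + \|g^\varepsilon\|_\infty - L^\varepsilon M$ only gives the linear-in-time bound $M(t)\leq M(0)+(K_0/\varepsilon + \|g^\varepsilon\|_\infty)t$. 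The paper avoids this by not trying to exploit the damping at all: it simply notes $r^\varepsilon_\delta\geq 0$ on $\{r^\varepsilon>0\}$, drops $-L^\varepsilon r^\varepsilon_\delta$ from the differential inequality, and accepts the bound $\max r^\varepsilon \leq \max|r^\varepsilon_0| + M^\varepsilon T$, which is uniformly valid. Your argument is easily patched by replacing $M_*$ with $\max|r^\varepsilon_0| + (K_0/\varepsilon+\|g^\varepsilon\|_\infty)T$ (or by splitting into the cases $L^\varepsilon>0$ and $L^\varepsilon=0$), but as written it does not cover the degenerate case, so the claim ``$M_*$ is finite and $\delta$-independent'' is not established.
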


\begin{proof}
Let $\varphi ^\varepsilon$ be a classical solution for \eqref{ac5} with $\delta>0$ and
$\max _{x\in \Omega} |\varphi ^\varepsilon _0 (x) | \leq 1-a$.
By the definition, $r^\varepsilon _\delta (\varphi ^\varepsilon (x,t)) =r^\varepsilon (x,t) $ 
if $|\varphi ^\varepsilon (x,t)| \leq 1-\delta$. So we only need to prove 
$\sup _{(x,t) \in \Omega \times [0,T)} |\varphi ^\varepsilon (x,t)| \leq 1-\delta$.

By the maximum principle, we obtain $\sup _{x\in \Omega, t\in[0,T)} |\varphi ^\varepsilon (x,t)| \leq 1$
easily. Assume that there exists $(x,t) \in \Omega \times [0,T)$ such that $ \varphi ^\varepsilon (x,t) =1$.
Then $T_1 := \inf \{ t \in (0,T] \, | \, \varphi (x,t)=1 \ \text{for some} \ x \in \Omega  \}<T$.
Note that $r^\varepsilon (x,t) = (q^\varepsilon)^{-1} (\varphi ^\varepsilon (x,t))$ is well-defined for any $(x,t) \in \Omega \times [0,T_1)$.

Set $h (q):=\sqrt{2W (q)}$ for $q \in \mathbb{R}$. By \eqref{q} we obtain
\begin{equation}
q^{\varepsilon} _r =\frac{ h (q^{\varepsilon})}{\varepsilon} \qquad \text{and} \qquad q^{\varepsilon} _{rr}= \frac{( h (q^{\varepsilon}) )_r}{\varepsilon}=\frac{h _q ( q^{\varepsilon} ) }{\varepsilon} q^{\varepsilon} _r.
\label{q2'} 
\end{equation}
By \eqref{q}, \eqref{ac5}, and \eqref{q2'} we have
\begin{equation*}
\begin{split}
 q_r ^{\varepsilon} r_t ^\varepsilon  &= q_r ^{\varepsilon} \Delta r^\varepsilon  + q_{rr} ^{\varepsilon} |\nabla r^\varepsilon  |^2 -q_{rr} ^{\varepsilon} -(u^\varepsilon \cdot \nabla r^\varepsilon) q^\varepsilon _r -(g^\varepsilon+ L^\varepsilon r^\varepsilon _\delta )q^\varepsilon _r \\
&=  q_r ^{\varepsilon} \Delta r^\varepsilon  + q_{r} ^{\varepsilon} \frac{h _q}{\varepsilon} (|\nabla r^\varepsilon |^2 -1)  -(u^\varepsilon \cdot \nabla r^\varepsilon) q^\varepsilon _r  - (g^\varepsilon+ L^\varepsilon r^\varepsilon _\delta)q^\varepsilon _r 
\end{split}
\end{equation*}
for any $(x,t) \in \Omega \times (0,T_1)$. Thus we obtain
\begin{equation}
r_t^\varepsilon =  \Delta r^\varepsilon + \frac{h _q}{\varepsilon} (|\nabla r^\varepsilon |^2 -1) -u^\varepsilon \cdot \nabla r^\varepsilon - g^\varepsilon - L^\varepsilon r^\varepsilon _\delta \qquad \text{in} \ \Omega \times (0,T_1).
\label{eq4.2'}
\end{equation}
Set $M^\varepsilon := \varepsilon ^{-1} \max _{|s| \leq 1} |h_q (s)| 
+ \sup _{x\in \Omega , t\in [0,T_1)}| g^\varepsilon (x,t)| $. 
We remark that $\frac{h_q (q^\varepsilon)}{\varepsilon} \leq M^\varepsilon$ 
by $\sup _{x\in \Omega, t\in[0,T)} |\varphi ^\varepsilon (x,t)| \leq 1$.
From the definition, $r^\varepsilon _\delta >0$ in $U_T^b := \{ (x,t) \in \Omega \times (0,T_1-b) \, | \, r^\varepsilon (x,t) > 0 \}$ for $b\in (0,T_1/2)$. 
Therefore we have
\begin{equation*}
\tilde r_t^\varepsilon \leq  \Delta \tilde r^\varepsilon + \Big( \frac{h _q}{\varepsilon} \nabla \tilde r^\varepsilon -u^\varepsilon \Big) \cdot \nabla \tilde r^\varepsilon \qquad \text{in} \ U_T^b,
\end{equation*}
where $\tilde r ^\varepsilon := r^\varepsilon -M^\varepsilon t$. 
By the maximum principle, we obtain 
\begin{equation}
\max _{x\in \Omega , t\in [0,T_1-b]}  r ^\varepsilon (x,t)  \leq \max _{x\in \Omega} | r^\varepsilon (x,0)| + M^\varepsilon T_1.
\label{eq4.4'}
\end{equation}
The definition of $T_1$ implies $\lim _{b\downarrow 0} \max _{x\in \Omega , t\in [0,T_1-b]}  r ^\varepsilon (x,t) =\infty$.
This contradicts \eqref{eq4.4'} and $\varphi ^\varepsilon (x,t) <1$ for any $(x,t) \in \Omega \times [0,T)$. 
Similarly, we obtain
$\varphi ^\varepsilon (x,t) >-1$ for any $(x,t) \in \Omega \times [0,T)$.
In addition, $\max_{x\in \Omega} |r^\varepsilon (x,0)| \leq (q^\varepsilon)^{-1} (1-a) $ imply
\[
\max _{x\in \Omega, t \in [0,T] } |\varphi ^\varepsilon (x,t)| 
\leq
q^\varepsilon ((q^\varepsilon)^{-1} (1-a)+M^\varepsilon T) <1.
\]
Thus $\sup _{(x,t) \in \Omega \times [0,T)} |\varphi ^\varepsilon (x,t)| \leq 1-\delta$ 
holds for sufficiently small $\delta>0$.
\end{proof}

By Lemma \ref{prop4.1}, the standard parabolic PDE
theory shows 
\begin{proposition}\label{prop-existence}
Let $T>0$ and $\varphi ^\varepsilon $ be a smooth function on $\Omega$ with $\max_{x\in \Omega}|\varphi _0 ^\varepsilon (x)| <1$. Then there exists a unique solution $\varphi^\varepsilon$ for \eqref{ac2} with initial data $\varphi _0 ^\varepsilon$ and $\sup_{x\in \Omega, t\in [0,T)}|\varphi ^\varepsilon (x,t)| <1$ for any $t\in (0,T)$.
\end{proposition}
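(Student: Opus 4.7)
The plan is to combine Lemma \ref{prop4.1} with standard semilinear parabolic theory applied to the truncated equation \eqref{ac5}. Since $\max_{x\in\Omega}|\varphi_0^\varepsilon(x)|<1$, I can fix $a\in(0,1)$ with $\max_{x\in\Omega}|\varphi_0^\varepsilon(x)|\leq 1-a$, and then let $\delta\in(0,1)$ be the constant produced by Lemma \ref{prop4.1} for this $T$ and $a$.

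The first step is to construct a unique smooth solution of the truncated equation \eqref{ac5} on $\Omega\times[0,T)$. The nonlinear term
\[
F(\varphi,\nabla\varphi):=-\frac{W'(\varphi)}{\varepsilon^2}-u^\varepsilon\cdot\nabla\varphi-\frac{g^\varepsilon+L^\varepsilon r^\varepsilon_\delta(\varphi)}{\varepsilon}\sqrt{2W(\varphi)}
\]
is smooth in its arguments, globally Lipschitz in $\varphi$ (because $r^\varepsilon_\delta$, $W$ and $\sqrt{2W}$ are smooth and $r^\varepsilon_\delta$ has sublinear growth on $\mathbb{R}$), linear in $\nabla\varphi$, and has smooth coefficients in $(x,t)$ since $u^\varepsilon,g^\varepsilon\in C^\infty(\Omega\times[0,T])$. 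A standard contraction-mapping argument in a parabolic H\"older space yields local existence of a classical solution on the torus, while the maximum principle gives the a priori bound $|\varphi^\varepsilon|\leq 1$; combining this with parabolic Schauder estimates and bootstrapping produces a unique classical solution of \eqref{ac5} on all of $\Omega\times[0,T)$, smooth in $(x,t)$.

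With this solution in hand, Lemma \ref{prop4.1} applies directly and yields $\sup_{\Omega\times[0,T)}|\varphi^\varepsilon|\leq 1-\delta<1$. On the range $|\varphi^\varepsilon|\leq 1-\delta$ the truncation is inactive: by the definition of $r^\varepsilon_\delta$ one has $r^\varepsilon_\delta(\varphi^\varepsilon)=(q^\varepsilon)^{-1}(\varphi^\varepsilon)=r^\varepsilon$, so $\varphi^\varepsilon$ is automatically a classical solution of the original equation \eqref{ac2} with the required strict bound. Uniqueness of solutions of \eqref{ac2} with $\sup|\varphi^\varepsilon|<1$ then reduces to uniqueness for \eqref{ac5}: any such solution satisfies $\sup|\varphi^\varepsilon|\leq 1-\delta'$ for some $\delta'>0$, and choosing $\delta$ in the truncation smaller than $\delta'$ identifies it with a solution of \eqref{ac5}, which is unique.

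The only genuine obstacle is the local-in-time existence step, since the forcing term $g^\varepsilon+L^\varepsilon r^\varepsilon_\delta(\varphi^\varepsilon)$ is nonstandard; the key point that makes the fixed-point argument close is that the truncation built into $r^\varepsilon_\delta$ makes it globally Lipschitz on $\mathbb{R}$, so no blow-up can occur before $t=T$. Once this is secured, Lemma \ref{prop4.1} delivers both the strict bound $|\varphi^\varepsilon|<1$ and the identification with \eqref{ac2} at no extra cost.
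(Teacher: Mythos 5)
Your proposal follows the same route as the paper, which itself just invokes Lemma~\ref{prop4.1} together with ``standard parabolic PDE theory''; you correctly identify that the role of the truncation $r^\varepsilon_\delta$ is to make the local existence argument close, and that Lemma~\ref{prop4.1} then both sharpens the bound to $|\varphi^\varepsilon|\leq 1-\delta$ and removes the truncation. One small imprecision: $F$ is not literally globally Lipschitz in $\varphi$ for a general double-well $W$ (e.g.\ $W'(s)=-2s+2s^3$ is cubic), but this does not matter because the maximum principle a priori bound $|\varphi^\varepsilon|\leq 1$, which you do invoke, confines $\varphi^\varepsilon$ to a compact interval on which all the coefficients are Lipschitz, so the local-existence-plus-continuation scheme goes through unchanged.
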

%

\subsection{Non-positivity of the discrepancy measure}\mbox{}\\[0.2cm]
Set $\xi_\varepsilon (x,t) := \dfrac{\varepsilon |\nabla \varphi ^{\varepsilon} (x,t)|^2}{2} - \dfrac{W (\varphi ^{\varepsilon} (x,t))}{\varepsilon}$ for the solution $\varphi^\varepsilon$ for \eqref{ac2}. One of the key lemmas of this paper is the following:
\begin{lemma}\label{lem-negative}
Assume that $|\nabla r ^{\varepsilon} (x,0)|\leq 1$ for any $x \in \Omega$. Then we have $|\nabla r ^{\varepsilon} (x,t)|\leq 1$ and $\xi _{\varepsilon} (x,t)\leq 0$ for any $(x,t) \in \Omega \times [0,T)$. Moreover $\xi _t ^{\varepsilon} $ is a non-positive measure for $t \in [0,T)$.
\end{lemma}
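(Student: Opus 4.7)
The plan is to reduce the lemma to the scalar pointwise inequality $w^\varepsilon := |\nabla r^\varepsilon|^2-1\leq 0$ and then run a parabolic maximum principle whose source terms are absorbed by the damping built into the added forcing $-L^\varepsilon r^\varepsilon\sqrt{2W(\varphi^\varepsilon)}$ in \eqref{ac2}. First I would identify $\xi_\varepsilon$ with $w^\varepsilon$: writing $\varphi^\varepsilon=q^\varepsilon(r^\varepsilon)$ gives $|\nabla\varphi^\varepsilon|^2=(q^\varepsilon_r)^2|\nabla r^\varepsilon|^2$, and the profile identity \eqref{q} gives $\varepsilon(q^\varepsilon_r)^2/2=W(q^\varepsilon)/\varepsilon$, so
\[
\xi_\varepsilon=\frac{W(\varphi^\varepsilon)}{\varepsilon}\,w^\varepsilon.
\]
Since $W\geq 0$ by \eqref{w1}, the pointwise bound $w^\varepsilon\leq 0$ is equivalent to $\xi_\varepsilon\leq 0$, and $\xi_t^\varepsilon(\phi)\leq 0$ for $\phi\geq 0$ is then immediate.

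Next I would derive a parabolic PDE for $w^\varepsilon$ starting from the scalar equation \eqref{eq4.2'}. Taking $\nabla$, contracting with $2\nabla r^\varepsilon$, and using the Bochner identity $2\nabla r^\varepsilon\cdot\nabla\Delta r^\varepsilon=\Delta w^\varepsilon-2|\nabla^2 r^\varepsilon|^2$, the chain rule $\nabla(h_q(\varphi^\varepsilon)/\varepsilon)=(h_{qq}q^\varepsilon_r/\varepsilon)\nabla r^\varepsilon$, and $2\nabla r^\varepsilon\cdot(\nabla^2 r^\varepsilon)u^\varepsilon=u^\varepsilon\cdot\nabla w^\varepsilon$, one arrives at
\begin{align*}
\partial_t w^\varepsilon-\Delta w^\varepsilon&-\tfrac{2h_q}{\varepsilon}\nabla r^\varepsilon\cdot\nabla w^\varepsilon+u^\varepsilon\cdot\nabla w^\varepsilon\\
&=-2|\nabla^2 r^\varepsilon|^2+\tfrac{2h_{qq}q^\varepsilon_r}{\varepsilon}w^\varepsilon|\nabla r^\varepsilon|^2-2\nabla r^\varepsilon\cdot(\nabla u^\varepsilon)^T\nabla r^\varepsilon-2\nabla r^\varepsilon\cdot\nabla g^\varepsilon-2L^\varepsilon|\nabla r^\varepsilon|^2.
\end{align*}
The damping $-2L^\varepsilon|\nabla r^\varepsilon|^2$ is precisely the contribution of the added forcing to this equation.

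Then I would apply the parabolic maximum principle to $\bar w:=w^\varepsilon-\alpha t$ for arbitrary small $\alpha>0$. The hypothesis $|\nabla r^\varepsilon(\cdot,0)|\leq 1$ gives $\bar w(\cdot,0)\leq 0$, so if $\bar w$ ever became positive there would be a first interior maximum $(x_0,t_0)$ at which $\partial_t w^\varepsilon\geq\alpha$, $\nabla w^\varepsilon=0$, $\Delta w^\varepsilon\leq 0$, and $|\nabla r^\varepsilon|^2=w^\varepsilon+1\geq 1$, so in particular $|\nabla r^\varepsilon|\leq|\nabla r^\varepsilon|^2$. Evaluating the PDE at $(x_0,t_0)$, the drift terms drop out and crude bounds yield
\[
\alpha\leq -2|\nabla^2 r^\varepsilon|^2+\tfrac{2h_{qq}q^\varepsilon_r}{\varepsilon}w^\varepsilon(w^\varepsilon+1)+\bigl(2|\nabla u^\varepsilon|+2|\nabla g^\varepsilon|-2L^\varepsilon\bigr)(w^\varepsilon+1).
\]
The very definition $L^\varepsilon=2\sup|\nabla u^\varepsilon|+\sup|\nabla g^\varepsilon|$ forces the last bracket to be $\leq-2\sup|\nabla u^\varepsilon|\leq 0$, so the damping exactly kills the worst contributions from $\nabla u^\varepsilon$ and $\nabla g^\varepsilon$. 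Letting $\alpha\downarrow 0$ would then give $w^\varepsilon\leq 0$ throughout $\Omega\times[0,T)$.

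The hardest point will be controlling the coefficient $h_{qq}(\varphi^\varepsilon)$: its non-positivity is automatic for the model potential $W(s)=(1-s^2)^2/2$ (where $h\equiv 1-s^2$ on $(-1,1)$ and $h_{qq}\equiv -2$), but it is not guaranteed by the axioms \eqref{w1}--\eqref{w4} alone. In full generality I would therefore either impose this additional structural property on $W$ or, following Ilmanen, absorb the term by a more delicate barrier argument that exploits $w^\varepsilon=0$ on the boundary of the positivity region.
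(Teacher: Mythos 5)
Your strategy — reduce $\xi_\varepsilon\leq 0$ to $w^\varepsilon:=|\nabla r^\varepsilon|^2-1\leq 0$, derive a parabolic inequality for $w^\varepsilon$, and run a maximum principle whose source terms are killed by the $L^\varepsilon$-damping — is exactly the paper's approach, and the identification $\xi_\varepsilon=\tfrac{W(\varphi^\varepsilon)}{\varepsilon}w^\varepsilon$ and the Bochner computation are correct. The algebra showing $2\sup|\nabla u^\varepsilon|+\sup|\nabla g^\varepsilon|=L^\varepsilon$ absorbs the transport and forcing contributions is also right, and mirrors the paper's \eqref{eq4.3}.

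However, the final maximum-principle step has a genuine gap, and your concluding hedge about $h_{qq}$ is the symptom. After you move everything to one side, the only term not visibly $\leq 0$ at a positive maximum is $\tfrac{2h_{qq}q^\varepsilon_r}{\varepsilon}\,w^\varepsilon\,|\nabla r^\varepsilon|^2$. You observe correctly that this can be positive if $h_{qq}>0$, and you then propose to impose an extra structural hypothesis on $W$. But the paper imposes no such hypothesis and does not need to. The crucial structural feature you should exploit is that this term is \emph{proportional to $w^\varepsilon$ itself}: the inequality \eqref{max2} has the form
\[
\partial_t w^\varepsilon\ \leq\ \Delta w^\varepsilon+b\cdot\nabla w^\varepsilon+c\,w^\varepsilon,
\]
where $b,c$ are merely continuous (hence bounded on $\Omega\times[0,T']$ for any $T'<T$, since $\varphi^\varepsilon$ stays in a compact subset of $(-1,1)$ by Proposition \ref{prop-existence}). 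The standard parabolic maximum principle for such linear differential inequalities on a closed manifold gives $w^\varepsilon\leq 0$ from $w^\varepsilon(\cdot,0)\leq 0$, \emph{regardless of the sign of $c$}. Your linear-in-time barrier $\alpha t$ is too weak to see this: at the touching point one only gets $\alpha\leq c(x_0,t_0)\,\alpha t_0$, which is not contradictory when $c>0$ and $t_0$ is not small. The correct remedy is the standard exponential rescaling: take $\lambda>\sup_{\Omega\times[0,T']}c$ and consider $\tilde w:=e^{-\lambda t}w^\varepsilon$ (equivalently a barrier $\alpha e^{\lambda t}$). Then $\partial_t\tilde w\leq\Delta\tilde w+b\cdot\nabla\tilde w+(c-\lambda)\tilde w$ with $c-\lambda<0$, and at a first positive interior maximum of $\tilde w$ one gets $0\leq(c-\lambda)\tilde w<0$, a genuine contradiction; letting $T'\uparrow T$ finishes the argument. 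In short: you do not need $h_{qq}\leq 0$, you need to replace $\alpha t$ by $\alpha e^{\lambda t}$ (or simply invoke the comparison principle for linear parabolic inequalities with bounded zeroth-order coefficient, which is what the paper's phrase ``by the maximum principle'' means).
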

\begin{proof}
By \eqref{q} we have
\[ \frac{\varepsilon |\nabla \varphi ^{\varepsilon} |^2/2}{W(\varphi ^{\varepsilon}) / \varepsilon}\leq |\nabla r^\varepsilon |^2 \ \ \text{on} \ \ \Omega \times[0,T).\]
Therefore, if  $|\nabla r^\varepsilon |\leq 1$ then $\xi _\varepsilon \leq 0$ and $\xi _t ^\varepsilon$ is a non-positive measure.
Thus we only need to prove that $|\nabla r^\varepsilon |\leq 1$ on $\Omega \times [0,T)$. 

By an argument similar to that in \eqref{eq4.2'},  we obtain
\begin{equation}
r_t^\varepsilon =  \Delta r^\varepsilon + \frac{h _q}{\varepsilon} (|\nabla r^\varepsilon |^2 -1) -u^\varepsilon \cdot \nabla r^\varepsilon - g^\varepsilon - L^\varepsilon r^\varepsilon,
\label{eq4.2}
\end{equation}
where $h (q)=\sqrt{2W (q)}$ for $q \in \mathbb{R}$. We compute
\begin{equation}
\begin{split}
& \nabla ( -u^\varepsilon \cdot \nabla r^\varepsilon -g^\varepsilon - L^\varepsilon r^\varepsilon) \cdot \nabla r^\varepsilon \\ 
\leq & -\frac12 u^\varepsilon \cdot \nabla |\nabla r ^\varepsilon|^2 + |\nabla r ^\varepsilon |^2 |\nabla u^\varepsilon| + \frac12 |\nabla g^\varepsilon| (1+| \nabla r^\varepsilon|^2)- L^\varepsilon | \nabla r^\varepsilon|^2\\
 \leq & -\frac12 u^\varepsilon \cdot \nabla |\nabla r ^\varepsilon|^2 + \frac12 L^\varepsilon (1-| \nabla r^\varepsilon|^2).
\end{split}
\label{eq4.3}
\end{equation}
By \eqref{eq4.2} and \eqref{eq4.3}, we have
\begin{equation}
\begin{split}
\partial _t |\nabla r^\varepsilon |^2 \leq & \Delta |\nabla r^\varepsilon |^2 - 2|\nabla^2 r^\varepsilon |^2 +\frac{2}{\varepsilon} \nabla r^\varepsilon \cdot \nabla h _q (|\nabla r^\varepsilon |^2 -1) + \left( \frac{2h _q}{\varepsilon} \nabla r^\varepsilon-\frac{u^\varepsilon}{2} \right)\cdot \nabla |\nabla r^\varepsilon |^2\\
&+\frac12 L^\varepsilon (1-| \nabla r^\varepsilon|^2).
\end{split}
\label{max}
\end{equation}
Set $w^\varepsilon :=|\nabla r^\varepsilon|^2 -1 $. By \eqref{max} we obtain
\begin{equation}
\partial _t w^\varepsilon \leq \Delta w^\varepsilon  + \left( \frac{2h _q}{\varepsilon} \nabla r^\varepsilon-\frac{u^\varepsilon}{2} \right)\cdot \nabla w^\varepsilon +\left( \frac{2}{\varepsilon} \nabla r^\varepsilon \cdot \nabla h _q -\frac12 L^\varepsilon \right)w^\varepsilon .
\label{max2}
\end{equation}
By the assumption we have $w^\varepsilon (\cdot,0) =|\nabla r^\varepsilon (\cdot,0)|^2 -1\leq 0$ on $\Omega$. Therefore by \eqref{max2} and the maximum principle we obtain $w^\varepsilon \leq 0$ on $\Omega \times [0,T)$. Hence we have $|\nabla r^\varepsilon| \leq 1$ on $\Omega \times [0,T)$.
\end{proof}

\begin{remark}
In the case of the volume preserving MCF, that is, $u^\varepsilon \equiv 0$, $L^\varepsilon \equiv 0$, and $g^\varepsilon =g^\varepsilon (t)$ be a non-local term of $\varphi^\varepsilon$, similar estimates (including the monotonicity formula below) have been proven in \cite{takasao2017}.
\end{remark}

\begin{remark}\label{rem4.2}
To obtain the estimate for $\xi _\varepsilon$, a method of applying 
the maximum principle directly to $\xi_\varepsilon$ with some additional term 
is also well known 
(\cite{xchen1996, liu-sato-tonegawa, modica, takasao-tonegawa}) 
in the case of $g^\varepsilon \equiv 0$.
In \cite{takasao-tonegawa}, they considered the maximum principle for $\tilde \xi^\varepsilon := \dfrac{\varepsilon |\nabla \varphi ^{\varepsilon} (x,t)|^2}{2} - \dfrac{W (\varphi ^{\varepsilon} (x,t))}{\varepsilon} -\dfrac{G (\varphi ^{\varepsilon} (x,t))}{\varepsilon}$ to show the following estimate:
\begin{equation}
\dfrac{\varepsilon |\nabla \varphi ^{\varepsilon} (x,t)|^2}{2} - \dfrac{W (\varphi ^{\varepsilon} (x,t))}{\varepsilon} \leq 10 \varepsilon ^{-\beta} \qquad \text{in} \ \Omega \times [0,T],
\label{rem4.2-eq1}
\end{equation}
where $\varphi ^\varepsilon$ is a solution for \eqref{ac1}, $\beta \in (0,\frac12)$ and $G$ is a function such as $G (\varphi ^{\varepsilon}) = \varepsilon ^{\frac12} \Big( 1-\frac18 (\varphi ^\varepsilon -\alpha_1 )^2\Big)$. Clearly, \eqref{rem4.2-eq1} is weaker than \eqref{negativity}, and the key of the proof of \eqref{rem4.2-eq1} is that $\tilde \xi ^\varepsilon$ satisfies 
\begin{equation*}
\partial_t \tilde \xi^\varepsilon + u^\varepsilon \cdot \nabla \tilde \xi ^\varepsilon -\Delta \tilde \xi ^\varepsilon \leq F(\varepsilon, W',G',G'',\nabla \varphi ^\varepsilon,\nabla u^\varepsilon)
\end{equation*}
for suitable $F$ (see \cite[(4.32)]{takasao-tonegawa}). However, in the case of $g\not =0$, it is not known whether similar estimates can be obtained in this way, because $F\leq 0$ is not necessarily and the control of the term $g^\varepsilon \tilde \xi ^\varepsilon$ is more difficult than that of the term $u^\varepsilon \cdot \nabla \tilde \xi ^\varepsilon$, from the viewpoint of the maximum principle.
\end{remark}


\subsection{$L^2$-estimates of transport term and forcing term}\mbox{}\\[0.2cm]
The following estimate corresponds to the $L^2 (\mu ^\varepsilon _t )$-estimate of $f^\varepsilon$.
\begin{lemma}\label{lemma4.6}
Assume that $|\varphi^\varepsilon | <1 $ and $|\nabla r^\varepsilon| \leq 1$ in $\Omega \times [0,T)$, $p\in[2d/(d+1) ,\infty)$, and $0\leq L^\varepsilon \leq \varepsilon^{ -\gamma}$ for $\gamma >0$. Then we have
\begin{equation}
\begin{split}
& \int _{\Omega} \varepsilon \Big( (u^\varepsilon \cdot \nabla \varphi ^\varepsilon) +(g^\varepsilon +L^\varepsilon r^\varepsilon) \frac{\sqrt{2W(\varphi ^\varepsilon)}}{\varepsilon} \Big)^2 \, dx =
2\int _{\Omega} |f^\varepsilon |^2 \frac{W(\varphi ^\varepsilon)}{\varepsilon}  \, dx\\
 \leq & \Cl{const:1} (D(t) (\|u^\varepsilon (\cdot ,t) \|_{W^{1,p} (\Omega)} ^2 +\|g^\varepsilon (\cdot ,t) \|_{W^{1,p} (\Omega)} ^2)+ \varepsilon ^{1-2\gamma}),
\end{split}
\label{eq4.6}
\end{equation}
where $\Cr{const:1}=\Cr{const:1}(d,p,W,|\Omega|)>0$.
\end{lemma}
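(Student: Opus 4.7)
My plan has three stages: derive the pointwise identity that yields the equality, reduce the $L^2$-integral of $f^\varepsilon$ against $W/\varepsilon \, dx$ to three elementary terms, and invoke a Sobolev trace-type inequality for $\mu_t^\varepsilon$.

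First, using \eqref{q} together with $\varphi^\varepsilon = q^\varepsilon(r^\varepsilon)$ I have
\[
\nabla \varphi^\varepsilon = q^\varepsilon_r(r^\varepsilon) \nabla r^\varepsilon = \frac{\sqrt{2W(\varphi^\varepsilon)}}{\varepsilon} \nabla r^\varepsilon.
\]
Substituting into the integrand and recalling $f^\varepsilon = -(u^\varepsilon \cdot \nabla r^\varepsilon) - g^\varepsilon - L^\varepsilon r^\varepsilon$ gives
\[
u^\varepsilon \cdot \nabla \varphi^\varepsilon + (g^\varepsilon + L^\varepsilon r^\varepsilon) \frac{\sqrt{2W(\varphi^\varepsilon)}}{\varepsilon} = -\frac{\sqrt{2W(\varphi^\varepsilon)}}{\varepsilon} f^\varepsilon,
\]
from which the equality in \eqref{eq4.6} follows after squaring and multiplying by $\varepsilon$.

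Next I expand $|f^\varepsilon|^2 \leq 3(|u^\varepsilon \cdot \nabla r^\varepsilon|^2 + |g^\varepsilon|^2 + (L^\varepsilon)^2 |r^\varepsilon|^2)$ and use the hypotheses $|\nabla r^\varepsilon| \leq 1$ and $L^\varepsilon \leq \varepsilon^{-\gamma}$ to get
\[
|f^\varepsilon|^2 \frac{W(\varphi^\varepsilon)}{\varepsilon} \leq 3(|u^\varepsilon|^2 + |g^\varepsilon|^2) \frac{W(\varphi^\varepsilon)}{\varepsilon} + 3\varepsilon^{-2\gamma} |r^\varepsilon|^2 \frac{W(\varphi^\varepsilon)}{\varepsilon}.
\]
The last summand is handled by the scaling identity $q^\varepsilon(r) = q(r/\varepsilon)$ for the unique solution of \eqref{q}, so that $r^\varepsilon = \varepsilon q^{-1}(\varphi^\varepsilon)$; then assumption \eqref{w4} gives $|r^\varepsilon|^2 W(\varphi^\varepsilon) \leq \Cr{const:w}\varepsilon^2$, hence $\varepsilon^{-2\gamma} \int_\Omega |r^\varepsilon|^2 W(\varphi^\varepsilon)/\varepsilon \, dx \leq \Cr{const:w}|\Omega|\varepsilon^{1-2\gamma}$, which accounts for the $\varepsilon^{1-2\gamma}$ contribution in \eqref{eq4.6}.

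The remaining and crucial step is to bound $\int_\Omega (|u^\varepsilon|^2 + |g^\varepsilon|^2) W(\varphi^\varepsilon)/\varepsilon \, dx$. By the definition of $\mu_t^\varepsilon$ this is $\leq \sigma \int_\Omega (|u^\varepsilon|^2 + |g^\varepsilon|^2) \, d\mu_t^\varepsilon$, and I would then invoke a Sobolev trace-type inequality: for any Radon measure $\nu$ on $\Omega$ satisfying $\nu(B_r^d(x)) \leq D\omega_{d-1} r^{d-1}$ on every ball $B_r^d(x) \subset \Omega$, and any $f \in W^{1,p}(\Omega)$ with $p \geq 2d/(d+1)$,
\[
\int_\Omega |f|^2 \, d\nu \leq C(d,p,|\Omega|) \, D \, \|f\|_{W^{1,p}(\Omega)}^2.
\]
Applying this with $\nu = \mu_t^\varepsilon$ (so that $D = D(t)$) and $f = u^\varepsilon, g^\varepsilon$ completes the proof. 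The main obstacle is precisely this trace-type inequality: the threshold $p = 2d/(d+1)$ is sharp because it matches the classical trace exponent $(d-1)p/(d-p) = 2$ for $L^2$-integrability against $(d-1)$-dimensional measures. I expect it to be established by combining the Sobolev embedding $W^{1,p}(\Omega) \hookrightarrow L^{dp/(d-p)}(\Omega)$ with a Vitali covering argument exploiting the Morrey-type density bound on $\nu$, or by quoting the corresponding estimate already developed in \cite{takasao-tonegawa}.
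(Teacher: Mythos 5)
Your proposal is correct and follows essentially the same route as the paper: the algebraic identity $\nabla\varphi^\varepsilon = \varepsilon^{-1}\sqrt{2W(\varphi^\varepsilon)}\,\nabla r^\varepsilon$ gives the equality, the elementary inequality $|a+b+c|^2\le 3(|a|^2+|b|^2+|c|^2)$ together with $|\nabla r^\varepsilon|\le1$ and $L^\varepsilon\le\varepsilon^{-\gamma}$ gives the three-term split, and \eqref{w4} with $r^\varepsilon=\varepsilon q^{-1}(\varphi^\varepsilon)$ handles the $L^\varepsilon r^\varepsilon$ term. The ``Sobolev trace-type inequality'' you invoke is exactly the Meyers--Ziemer inequality, which the paper records as Theorem \ref{thmmz} and \eqref{mzineq2} in the appendix; the paper transfers it from $\mathbb{R}^d$ to the torus by a partition-of-unity argument (splitting the cases $2d/(d+1)\le p<2$ and $p\ge 2$), which is the concrete realization of the covering argument you sketch.
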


\begin{proof}
We compute
\begin{equation}
\begin{split}
&\int _{\Omega} \varepsilon \Big( (u^\varepsilon \cdot \nabla \varphi ^\varepsilon) +(g^\varepsilon +L^\varepsilon r^\varepsilon)  \frac{\sqrt{2W(\varphi ^\varepsilon)}}{\varepsilon} \Big)^2 \, dx 
= 2 \int _{\Omega} \Big( u^\varepsilon \cdot \nabla r^\varepsilon +g^\varepsilon +L^\varepsilon r^\varepsilon \Big)^2 \frac{W(\varphi ^\varepsilon)}{\varepsilon} \, dx \\
= & 2\int _{\Omega} |f^\varepsilon |^2 \frac{W(\varphi ^\varepsilon)}{\varepsilon}  \, dx \leq 6 \int _{\Omega} |u^\varepsilon |^2 \frac{W(\varphi ^\varepsilon)}{\varepsilon} \, dx
+ 6 \int _{\Omega} (g^\varepsilon )^2 \frac{W(\varphi ^\varepsilon)}{\varepsilon} \, dx
 + 6 \int _{\Omega} (L^\varepsilon r^\varepsilon )^2 \frac{W(\varphi ^\varepsilon)}{\varepsilon} \, dx,
\end{split}
\label{eq4.7}
\end{equation}
where $|\nabla r^\varepsilon | \leq 1$ and $\nabla \varphi ^\varepsilon =q^\varepsilon _r \nabla r^\varepsilon=\varepsilon ^{-1} \sqrt{2W(\varphi^\varepsilon)} \nabla r^\varepsilon$ are used.

Next we show that there exists $C>0$ such that
\begin{equation}
\int _{\Omega} (L^\varepsilon r^\varepsilon ) ^2 \frac{W(\varphi^\varepsilon)}{\varepsilon} \, dx \leq C \varepsilon ^{1-2\gamma}.
\label{eq4.8}
\end{equation}
We remark that $q^\varepsilon (r) = q(r/\varepsilon)$ and $r^\varepsilon =\varepsilon q^{-1} (\varphi^\varepsilon)$. Thus we have
\begin{equation*}
\begin{split}
\int _{\Omega} (L^\varepsilon r^\varepsilon ) ^2 \frac{W(\varphi^\varepsilon)}{\varepsilon} \, dx 
\leq \int _{\Omega} \varepsilon ^{1-2\gamma} (q^{-1} (\varphi ^\varepsilon))^2 W(\varphi^\varepsilon) \, dx\leq \varepsilon ^{1-2\gamma} \Cr{const:w} |\Omega| ,
\end{split}
\end{equation*}
where \eqref{w4} is used. Hence we obtain \eqref{eq4.8}. 

Finally we show that there exists $C>0$ such that
\begin{equation}
\int _{\Omega} |u^\varepsilon |^2 \frac{W(\varphi ^\varepsilon)}{\varepsilon} \, dx \leq CD(t) \|u^\varepsilon (\cdot ,t) \|_{W^{1,p} (\Omega)} ^2.
\label{eq4.9}
\end{equation}
Let $\{ \psi_i \} _{i} $ be a partition of unity on $\Omega$ with $\psi _i  \in C_c ^{\infty} (\Omega)$, $\diam (\spt \psi _i ) \leq 1/2$ and $\| \psi _i \| _{C^2} \leq c(d)$ for any $i$. First we consider the case of $2d/(d+1)\leq p<2$. Set $s:= p(d-1)/(d-p) $. Note that $s\geq 2$ and $p$ satisfies \eqref{assumption-p}. By \eqref{mzineq2} we have 
\begin{equation}
\begin{split}
\int _{\Omega} |u^\varepsilon |^2 \frac{W(\varphi ^\varepsilon)}{\varepsilon} \, dx 
\leq & \Big( \int _{\Omega} |u^\varepsilon |^s \, d\mu _t ^\varepsilon \Big)^{\frac{2}{s}} (2\mu _t ^\varepsilon (\Omega))^{1-\frac{2}{s}} \\
\leq & \Big( \sum _{i} C \int _{\Omega} |\psi _i u^\varepsilon |^s \, d\mu _t ^\varepsilon \Big)^{\frac{2}{s}} (2\mu _t ^\varepsilon (\Omega))^{1-\frac{2}{s}} \\
\leq & \Big( \sum _{i} Cc_{MZ} D(t) \Big( \int _{\spt \psi _i} |u^\varepsilon |^p + |\nabla u^\varepsilon |^p \, dx \Big) ^{\frac{s}{p}}\Big)^{\frac{2}{s}} (2D(t))^{1-\frac{2}{s}} \\
\leq & CD(t) \|u^\varepsilon (\cdot ,t) \|_{W^{1,p} (\Omega)} ^2.
\end{split}
\label{eq4.10}
\end{equation}

For the case of $p\geq 2$, we compute
\begin{equation}
\begin{split}
\int _{\Omega} |u^\varepsilon |^2 \frac{W(\varphi ^\varepsilon)}{\varepsilon} \, dx 
\leq & \Big( \int _{\Omega} |u^\varepsilon |^p \, d\mu _t ^\varepsilon \Big)^{\frac{2}{p}} (2\mu _t ^\varepsilon (\Omega))^{1-\frac{2}{p}} \\
\leq & \Big( \sum _{i} C \int _{\Omega} |\psi _i u^\varepsilon |^p \, d\mu _t ^\varepsilon \Big)^{\frac{2}{p}} (2D(t))^{1-\frac{2}{p}} \\
\leq & \Big( \sum _{i} Cc_{MZ} D(t) \int _{\spt \psi _i} |u^\varepsilon |^p + |u^\varepsilon |^{p-1}|\nabla u^\varepsilon| \, dx \Big)^{\frac{2}{p}} (2D(t))^{1-\frac{2}{p}} \\
\leq & CD(t) \|u^\varepsilon (\cdot ,t) \|_{W^{1,p} (\Omega)} ^2,
\end{split}
\label{eq4.11}
\end{equation}
where \eqref{mzineq} with $p=1$ is used.
By \eqref{eq4.10} and \eqref{eq4.11} we have \eqref{eq4.9}. Similarly, we have
\begin{equation}
\int _{\Omega} (g^\varepsilon )^2 \frac{W(\varphi ^\varepsilon)}{\varepsilon} \, dx \leq CD(t) \|g^\varepsilon (\cdot ,t) \|_{W^{1,p} (\Omega)} ^2.
\label{eq4.9g}
\end{equation}
Therefore by \eqref{eq4.7}, \eqref{eq4.8}, \eqref{eq4.9}, and \eqref{eq4.9g} we obtain \eqref{eq4.6}. 
\end{proof}

\begin{remark}\label{rem4.3}
The estimate \eqref{eq4.8} means that if $\|\nabla u^\varepsilon, \nabla g^\varepsilon\| _{\infty} \leq \varepsilon ^{-\gamma}$ for $\gamma \in [0,1/2)$, then the additional term $- L^\varepsilon r^\varepsilon \sqrt{2W (\varphi ^{\varepsilon})}$ vanishes as $\varepsilon \downarrow 0$ 
in the framework of the phase field method of this paper (see \eqref{vanish-L}). 
\end{remark}
\subsection{Energy estimates and monotonicity formula}\mbox{}\\[0.2cm]
Next we show the standard energy estimates and the monotonicity formula for the Allen-Cahn equation \eqref{ac2}. 
\begin{lemma}\label{lem:4.5}
Let $p\in[2d/(d+1) ,\infty)$ and $2<q<\infty$. Then there exists $\Cl{const:2} =\Cr{const:2}(d,p,q,W,|\Omega|)>0$ such that for any $0\leq t_1 <t_2<T $ we have
\begin{equation}
\begin{split}
& \sup_{t\in [t_1,t_2]} \mu _t ^\varepsilon (\Omega) +\frac{1}{2\sigma} \int _{t_1} ^{t_2} \int_\Omega\varepsilon \Big( \Delta \varphi ^\varepsilon -\frac{W'(\varphi ^\varepsilon)}{\varepsilon^2} \Big)^2 \, dxdt \\
\leq & \mu _{t_1} ^\varepsilon (\Omega) + \Cr{const:2} \Big\{ (t_2-t_1) ^{1-\frac{2}{q}}( \| u^\varepsilon \|^2 _{L^q ([t_1,t_2] ; (W^{1,p} (\Omega))^d)} + \| g^\varepsilon \|^2 _{L^q ([t_1,t_2] ; W^{1,p} (\Omega))} )\sup _{t\in [t_1,t_2]} D(t) \\
& \qquad \qquad \qquad + (t_2-t_1) \varepsilon ^{1-2\gamma }  \Big\}.
\end{split}
\label{eq4.12}
\end{equation}
\end{lemma}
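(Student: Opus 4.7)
The plan is a standard parabolic energy-dissipation computation, with the transport and forcing terms controlled via Lemma \ref{lemma4.6}. First I would differentiate $\mu_t^\varepsilon(\Omega)$ in $t$ and integrate by parts to obtain
\[
\frac{d}{dt}\mu_t^\varepsilon(\Omega)=-\frac{1}{\sigma}\int_\Omega\Bigl(\varepsilon\Delta\varphi^\varepsilon-\frac{W'(\varphi^\varepsilon)}{\varepsilon}\Bigr)\varphi^\varepsilon_t\,dx.
\]
Substituting $\varphi^\varepsilon_t$ from \eqref{ac2} turns the right-hand side into a negative squared-Allen--Cahn dissipation $-\frac{1}{\sigma}\int_\Omega\frac{1}{\varepsilon}(\varepsilon\Delta\varphi^\varepsilon-W'(\varphi^\varepsilon)/\varepsilon)^2\,dx$ plus a cross term pairing the Allen--Cahn operator against $u^\varepsilon\cdot\nabla\varphi^\varepsilon+(g^\varepsilon+L^\varepsilon r^\varepsilon)\sqrt{2W(\varphi^\varepsilon)}/\varepsilon$.

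Next I would apply Young's inequality to this cross term with weights tuned to absorb half of the dissipation into the left-hand side, leaving on the right a constant multiple of $\int_\Omega\varepsilon[u^\varepsilon\cdot\nabla\varphi^\varepsilon+(g^\varepsilon+L^\varepsilon r^\varepsilon)\sqrt{2W(\varphi^\varepsilon)}/\varepsilon]^2\,dx$. That is exactly the quantity estimated by Lemma \ref{lemma4.6}, which returns a bound of the form $C\bigl(D(t)(\|u^\varepsilon(\cdot,t)\|_{W^{1,p}}^2+\|g^\varepsilon(\cdot,t)\|_{W^{1,p}}^2)+\varepsilon^{1-2\gamma}\bigr)$. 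This produces the pointwise-in-$t$ dissipation inequality
\[
\frac{d}{dt}\mu_t^\varepsilon(\Omega)+\frac{1}{2\sigma}\int_\Omega\varepsilon\Bigl(\Delta\varphi^\varepsilon-\frac{W'(\varphi^\varepsilon)}{\varepsilon^2}\Bigr)^2\,dx\le C\Bigl(D(t)(\|u^\varepsilon\|_{W^{1,p}}^2+\|g^\varepsilon\|_{W^{1,p}}^2)+\varepsilon^{1-2\gamma}\Bigr).
\]

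Integrating from $t_1$ to any $t\in[t_1,t_2]$ and discarding the non-negative dissipation on the left gives $\mu_t^\varepsilon(\Omega)\le\mu_{t_1}^\varepsilon(\Omega)+\int_{t_1}^{t}[\,\cdots\,]\,ds$, from which I take the supremum over $t$; evaluating the full inequality separately at $t=t_2$ controls the time-integrated Allen--Cahn dissipation, and the two bounds combine into the left-hand side of the claimed inequality (absorbing harmless numerical factors into $\Cr{const:2}$). The remaining time integral of $D(s)(\|u^\varepsilon(\cdot,s)\|_{W^{1,p}}^2+\|g^\varepsilon(\cdot,s)\|_{W^{1,p}}^2)$ is processed by H\"older with conjugate exponents $q/2$ and $q/(q-2)$, producing the factor $(t_2-t_1)^{1-2/q}$ multiplying $\|u^\varepsilon\|_{L^q([t_1,t_2];W^{1,p})}^2+\|g^\varepsilon\|_{L^q([t_1,t_2];W^{1,p})}^2$ together with $\sup_{[t_1,t_2]}D(t)$, while the $\varepsilon^{1-2\gamma}$ remainder contributes $(t_2-t_1)\varepsilon^{1-2\gamma}$. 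The only non-routine ingredient is the forcing estimate in Lemma \ref{lemma4.6}, which in turn depends on the gradient bound $|\nabla r^\varepsilon|\le 1$ from Lemma \ref{lem-negative} and on a Sobolev-type $W^{1,p}(\Omega)\hookrightarrow L^s(\mu_t^\varepsilon)$ control; everything else is classical parabolic bookkeeping and no genuine obstacle is anticipated.
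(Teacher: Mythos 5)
Your proposal matches the paper's proof: differentiate $\mu_t^\varepsilon(\Omega)$, integrate by parts, substitute \eqref{ac3}, absorb half the dissipation via Young's inequality to arrive at \eqref{eq4.13}, then bound the cross term with Lemma \ref{lemma4.6} and integrate in time with H\"older (exponents $q/2$, $q/(q-2)$) to produce the $(t_2-t_1)^{1-2/q}$ factor. This is exactly the argument the paper gives, just unpacked in more detail.
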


\begin{proof}
By \eqref{ac3} and the integration by parts, we have
\begin{equation}
\frac{d}{dt}\mu _t ^\varepsilon (\Omega) +\frac{1}{2\sigma} \int_\Omega \varepsilon\Big( \Delta \varphi ^\varepsilon -\frac{W'(\varphi ^\varepsilon)}{\varepsilon^2} \Big)^2 \, dx
\leq \frac{1}{\sigma} \int _{\Omega} |f^\varepsilon|^2 \frac{W(\varphi^\varepsilon )}{\varepsilon} \, dx.
\label{eq4.13}
\end{equation}
Integration of \eqref{eq4.13} over $[t_1,t_2]$ with \eqref{eq4.6} gives \eqref{eq4.12}.
\end{proof}

To localize the backward heat kernel $\rho$, we fix a radially symmetric cut-off function
\[ \eta (x) \in C_c ^\infty (B_{1/2} ^d (0))  \quad \text{with} \quad \eta =1 \ \ \text{on} \ \ B_{1/4} ^d (0) , \ 0\leq \eta \leq 1, \]
and  we define $\tilde \rho _{y,s} (x,t) :=\eta (x-y) \rho _{y,s} (x,t)$.
The following estimate is the monotonicity formula for the modified equation \eqref{ac2}. 
\begin{lemma}\label{propmono}
Assume that $d\geq 2$, $T>0$, $\varphi^\varepsilon$ is a solution for \eqref{ac2} and the initial data satisfies $|\varphi ^\varepsilon_0 (x)|<1$ and $|\nabla r ^{\varepsilon} (x,0)|\leq 1$ for any $x \in \Omega$. Then
\begin{equation}
\begin{split}
\frac{d}{dt} \int _{\mathbb{R}^d} \rho _{y,s} (x,t) \, d\mu ^\varepsilon _t (x) \leq & \frac{1}{2\sigma} \int _{\mathbb{R}^d} \rho _{y,s} (x,t) |f^\varepsilon(x,t)|^2 \frac{W(\varphi ^\varepsilon(x,t))}{\varepsilon}  \, dx\\
 \leq & \frac{1}{2} \int _{\mathbb{R}^d} \rho _{y,s} (x,t) |f^\varepsilon(x,t)|^2 \, d\mu_t ^\varepsilon (x)
\end{split}
\label{monoton}
\end{equation}
and
\begin{equation}
\begin{split}
\frac{d}{dt} \int _{\mathbb{R}^d} \tilde \rho _{y,s} (x,t) \, d\mu ^\varepsilon _t (x) \leq & \frac{1}{2\sigma} \int _{\mathbb{R}^d} \tilde \rho _{y,s} (x,t) |f^\varepsilon(x,t)|^2 \frac{W(\varphi ^\varepsilon(x,t))}{\varepsilon}  \, dx\\
  & + \Cl{const:3} e^{-\frac{1}{128(s-t)}} \mu _t ^\varepsilon (B^d _{1/2} (y))
\end{split}
\label{monoton2}
\end{equation}
for any $y\in \mathbb{R}^d$, $0\leq t<s<T$ and $\varepsilon \in (0,1)$. Here $\Cr{const:3}=\Cr{const:3}(d)>0$, $\mu_t ^\varepsilon$ and $f^\varepsilon$ are extended periodically to $\mathbb{R}^d$. 
\end{lemma}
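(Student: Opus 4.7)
My plan is to adapt Ilmanen's monotonicity formula for the Allen--Cahn equation \cite[Section 3]{Ilmanen} to the modified equation \eqref{ac3}, treating the additional term $f^\varepsilon\sqrt{2W(\varphi^\varepsilon)}$ as a perturbation. The first step is to differentiate $\int\rho\,d\mu_t^\varepsilon$ in time and integrate by parts once in space to obtain, with $E^\varepsilon:=\varepsilon\Delta\varphi^\varepsilon-W'(\varphi^\varepsilon)/\varepsilon$,
\begin{equation*}
\frac{d}{dt}\int_{\mathbb{R}^d}\rho\,d\mu_t^\varepsilon
= \int\rho_t\,d\mu_t^\varepsilon - \frac{1}{\sigma}\int\rho\,\varphi_t^\varepsilon E^\varepsilon\,dx - \frac{1}{\sigma}\int\varepsilon\,\varphi_t^\varepsilon\,\nabla\rho\cdot\nabla\varphi^\varepsilon\,dx.
\end{equation*}
I would then substitute \eqref{ac3} in the form $\varepsilon\varphi_t^\varepsilon=E^\varepsilon+f^\varepsilon\sqrt{2W(\varphi^\varepsilon)}$, splitting each $\varphi_t^\varepsilon$ into an Allen--Cahn contribution (depending only on $\varphi^\varepsilon$, $E^\varepsilon$, and $\rho$) plus a forcing correction linear in $f^\varepsilon$.

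The Allen--Cahn piece $\int\rho_t\,d\mu_t^\varepsilon-\tfrac{1}{\sigma}\int\rho(E^\varepsilon)^2/\varepsilon\,dx-\tfrac{1}{\sigma}\int\nabla\rho\cdot\nabla\varphi^\varepsilon E^\varepsilon\,dx$ is a purely structural expression, and Ilmanen's completion-of-squares identity---relying only on the heat-kernel relation $\rho_t+\Delta\rho=-\rho/(2(s-t))$ and algebraic rearrangement---bounds it above by $\tfrac{1}{2(s-t)\sigma}\int\rho\,d\xi_t^\varepsilon$; by Lemma~\ref{lem-negative}, the hypothesis $|\nabla r^\varepsilon(\cdot,0)|\le 1$ propagates to all $t\in[0,T)$, so $\xi_t^\varepsilon\le 0$ and this contribution is $\le 0$. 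For the forcing correction $-\tfrac{1}{\sigma}\int\bigl(\rho E^\varepsilon/\varepsilon+\nabla\rho\cdot\nabla\varphi^\varepsilon\bigr)f^\varepsilon\sqrt{2W(\varphi^\varepsilon)}\,dx$, I would retain a suitable fraction of the negative $-\rho(E^\varepsilon)^2/\varepsilon$ term from the Allen--Cahn part and apply a weighted Young inequality, tuned so that both cross-terms are absorbed and the residual is exactly $\tfrac{1}{2\sigma}\int\rho|f^\varepsilon|^2 W(\varphi^\varepsilon)/\varepsilon\,dx$; this proves the first inequality in \eqref{monoton}. The second inequality is immediate from $W(\varphi^\varepsilon)/\varepsilon\le\varepsilon|\nabla\varphi^\varepsilon|^2/2+W(\varphi^\varepsilon)/\varepsilon$ together with Definition~\ref{defmeas}.

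For the localized bound \eqref{monoton2} I would repeat the same argument with $\tilde\rho=\eta\rho$ in place of $\rho$. All new contributions enter only through derivatives of $\eta$ (via the commutator $[\Delta,\eta]$) and are supported on the annulus $\{1/4\le|x-y|\le 1/2\}$, on which $\rho$ and $|\nabla\rho|$ are both bounded by $Ce^{-1/(64(s-t))}$; absorbing half of this exponent into a multiplicative constant yields the stated error $\Cr{const:3}e^{-1/(128(s-t))}\mu_t^\varepsilon(B_{1/2}^d(y))$. The principal technical difficulty is the precise bookkeeping in the preceding step: the fraction of $-\rho(E^\varepsilon)^2/\varepsilon$ retained from the Allen--Cahn part must exactly match the Young parameter chosen for the forcing correction, so that the residual comes out with coefficient $\tfrac{1}{2\sigma}$ rather than some other multiple; the key algebraic fact making this possible is that both cross-terms generated by substituting \eqref{ac3} are simultaneously cancelable against $-\rho(E^\varepsilon)^2/\varepsilon$ for a single choice of Young parameter.
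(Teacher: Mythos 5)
Your proposal follows the same route as the paper: derive the Ilmanen-type monotonicity identity for the modified Allen--Cahn equation \eqref{ac3}, absorb the forcing perturbation $f^\varepsilon\sqrt{2W(\varphi^\varepsilon)}$ by a Young inequality against the retained quadratic term, invoke Lemma~\ref{lem-negative} so that the $\tfrac{1}{2(s-t)}\int\rho\,d\xi_t^\varepsilon$ contribution is non-positive, and control the localization errors from $\nabla\eta$ by the exponential decay of $\rho$ and $\nabla\rho$ on $\{|x-y|\geq 1/4\}$. The paper itself obtains the key intermediate estimate \eqref{mono1} by citing \cite[Prop.~2.7]{takasao2017} rather than spelling out the completion-of-squares bookkeeping, which is exactly the step you flag as the principal technical difficulty.
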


\begin{proof}
In this proof, we regard all functions and measures as periodically extended on $\mathbb{R}^d$. Set $\rho=\rho_{y,s}(x,t)$. By an argument similar to that in the proof of Proposition 2.7 in \cite{takasao2017}, we have
\begin{equation}
\begin{split}
\frac{d}{dt} \int _{\mathbb{R}^d} \rho \, d\mu_t ^\varepsilon\leq \frac{1}{2(s-t)}\int _{\mathbb{R}^d} \rho \, d\xi _t ^\varepsilon
+\frac{1}{2\sigma} \int _{\mathbb{R}^d} \rho |f^\varepsilon|^2 \frac{W(\varphi ^\varepsilon)}{\varepsilon}  \, dx.
\end{split}
\label{mono1}
\end{equation}
By Lemma \ref{lem-negative} and \eqref{mono1}, we obtain 
\begin{equation*}
\begin{split}
\frac{d}{dt} \int _{\mathbb{R}^d} \rho \, d\mu_t ^\varepsilon\leq
\frac{1}{2\sigma} \int _{\mathbb{R}^d} \rho |f^\varepsilon|^2 \frac{W(\varphi ^\varepsilon)}{\varepsilon}  \, dx
\leq \frac{1}{2} \int _{\mathbb{R}^d} \rho |f^\varepsilon|^2 \, d\mu_t ^\varepsilon.
\end{split}
\end{equation*}
Therefore we have \eqref{monoton}. In the computation \eqref{monoton} with $\tilde \rho$ instead of $\rho$, we obtain additional terms with the differentiation of $\eta$. Note that the integration of these terms are estimated by $c\mu _t ^\varepsilon (B^d _{1/2} (y)) e^{-\frac{1}{128(s-t)}} $ with $c=c(d) >0$ because $|\partial _{x_j} \rho| \leq c(j,d) e^{-\frac{1}{128(s-t)}} $ for any $x \in\Omega$ with $|x-y| >1/4$ and $j=0,1$. Therefore we obtain \eqref{monoton2}.
\end{proof}

The following estimates are given in \cite{takasao-tonegawa}. Thus we skip the proof.
\begin{lemma}\label{lem4.7}
Let $2<q<\infty$ and $p\in[\frac{2d}{d+1} ,\infty) \cap ( \frac{dq}{2(q-1)}, \infty)$. Then there exists $\Cl{const:4}=\Cr{const:4}(d,p,q)>0$ such that for any $0\leq t_1 <t_2 <s<T$ we have
\begin{equation}
\int _{t_1} ^{t_2} \int _{\mathbb{R}^d} \tilde {\rho} _{y,s} |u^\varepsilon|^2 \, d\mu_t ^\varepsilon dt \leq \Cr{const:4} (t_2 -t_1)^{\hat{p}}  \| u^\varepsilon \|^2 _{L^q ([t_1,t_2] ; (W^{1,p} (B_{1/2}^d (y)))^d)} \sup _{t\in [t_1,t_2]} D(t)
 \label{eq4.16}
\end{equation}
and
\begin{equation}
\int _{t_1} ^{t_2} \int _{\mathbb{R}^d} \tilde {\rho} _{y,s} |g^\varepsilon|^2 \, d\mu_t ^\varepsilon dt \leq \Cr{const:4} (t_2 -t_1)^{\hat{p}}  \| g^\varepsilon \|^2 _{L^q ([t_1,t_2] ; W^{1,p} (B_{1/2}^d (y)))} \sup _{t\in [t_1,t_2]} D(t),
 \label{eq4.17}
\end{equation}
where $\hat{p}$ is given by $\hat{p} = \frac{2pq-2p -dq}{pq} >0$ when $p<d$, $\hat{p} <\frac{q-2}{q}$ can be taken arbitrarily close to $\frac{q-2}{q}$ (however $c$ depends on $\hat{p}$ in addition), and $\hat{p} =\frac{q-2}{q}$ when $p>d$.
\end{lemma}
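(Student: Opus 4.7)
The plan is to bound the spatial integral pointwise in time via the Meyers--Ziemer inequality, then integrate in time with Hölder, extracting the factor $(t_2-t_1)^{\hat p}$ from a concavity estimate applied to the resulting singular integral. The estimates for $u^\varepsilon$ and $g^\varepsilon$ are structurally identical, so I focus on \eqref{eq4.16}.

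First consider $p<d$ and set $s:=p(d-1)/(d-p)$. Hölder in space with exponents $s/2$ and $s/(s-2)$ separates the integrand:
\[
\int_{\mathbb{R}^d} \tilde\rho_{y,s} |u^\varepsilon|^2 \, d\mu_t^\varepsilon \leq \Big(\int_{\mathbb{R}^d} |u^\varepsilon|^s\, d\mu_t^\varepsilon\Big)^{2/s} \Big(\int_{\mathbb{R}^d} \tilde\rho_{y,s}^{s/(s-2)}\, d\mu_t^\varepsilon\Big)^{(s-2)/s}.
\]
The first factor is controlled by a partition of unity argument together with \eqref{mzineq2} (exactly as in the proof of Lemma \ref{lemma4.6}), giving $C D(t)^{2/s} \|u^\varepsilon(\cdot,t)\|_{W^{1,p}(B_{1/2}^d(y))}^2$. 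For the weight factor I would combine the pointwise bound $\tilde\rho_{y,s}\leq C(s-t)^{-(d-1)/2}$ on $\spt\tilde\rho_{y,s}$ with the standard density estimate $\int \tilde\rho_{y,s}\, d\mu_t^\varepsilon \leq C D(t)$ (a dyadic layer-cake consequence of $\mu_t^\varepsilon(B_r(x))\leq D(t)\omega_{d-1}r^{d-1}$). This gives $\int \tilde\rho_{y,s}^{s/(s-2)}\, d\mu_t^\varepsilon \leq C (s-t)^{-(d-1)/(s-2)}D(t)$, and multiplying and using $(d-1)/s = d/p-1$ yields
\[
\int_{\mathbb{R}^d} \tilde\rho_{y,s} |u^\varepsilon|^2\, d\mu_t^\varepsilon \leq C D(t) \|u^\varepsilon(\cdot,t)\|_{W^{1,p}(B_{1/2}^d(y))}^2\, (s-t)^{-(d/p-1)}.
\]

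Next I apply Hölder in time with exponents $q/2$ and $q/(q-2)$: the first factor gives the desired $L^q_t W^{1,p}_x$-norm of $u^\varepsilon$, while the second reduces to $\int_{t_1}^{t_2}(s-t)^{-\beta}\,dt$ with $\beta = (d/p-1)q/(q-2)$. The condition $p > dq/(2(q-1))$ is precisely equivalent to $\beta<1$, and then the concavity inequality $a^{1-\beta}-b^{1-\beta}\leq (a-b)^{1-\beta}$ (valid for $0<1-\beta<1$ and $a>b>0$) yields
\[
\int_{t_1}^{t_2}(s-t)^{-\beta}\,dt = \tfrac{1}{1-\beta}\bigl((s-t_1)^{1-\beta}-(s-t_2)^{1-\beta}\bigr) \leq \tfrac{1}{1-\beta}(t_2-t_1)^{1-\beta}.
\]
A direct arithmetic check shows $(q-2)(1-\beta)/q = 2(q-1)/q - d/p = \hat p$, completing \eqref{eq4.16}. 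For $p>d$ I would instead use Morrey's embedding $W^{1,p}\hookrightarrow L^\infty$ together with $\int\tilde\rho_{y,s}\,d\mu_t^\varepsilon\leq CD(t)$, skipping the singular weight entirely and producing $\hat p=(q-2)/q$ after Hölder in time alone; the case $p=d$ is obtained by the same $p<d$ argument after sacrificing an arbitrarily small amount in the Sobolev exponent.

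The main obstacle is the correct handling of the Gaussian weight: bounding $\int \tilde\rho_{y,s}^{s/(s-2)}\,d\mu_t^\varepsilon$ produces a time singularity $(s-t)^{-(d/p-1)}$ that would be nonintegrable without the scaling-critical assumption $p>dq/(2(q-1))$, and the concavity step is what converts the two-endpoint expression in $s-t_1,s-t_2$ into the clean factor $(t_2-t_1)^{1-\beta}$ independent of $s$. Tracking the powers carefully so that the final exponent is exactly $\hat p$ is the part most prone to arithmetic slips.
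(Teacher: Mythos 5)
Your argument is correct and is the standard route one expects behind this estimate (the paper itself only cites \cite{takasao-tonegawa} and skips the proof). The Hölder-in-space step with exponent $\tilde s := p(d-1)/(d-p)$, the Meyers--Ziemer bound for the $L^{\tilde s}(\mu_t^\varepsilon)$-factor, the interpolation $\int\tilde\rho^{\tilde s/(\tilde s-2)}\,d\mu_t^\varepsilon\leq\|\tilde\rho\|_\infty^{2/(\tilde s-2)}\int\tilde\rho\,d\mu_t^\varepsilon$, the Hölder-in-time step with exponents $q/2$ and $q/(q-2)$, and the subadditivity inequality $(s-t_1)^{1-\beta}-(s-t_2)^{1-\beta}\le(t_2-t_1)^{1-\beta}$ all assemble correctly, and the arithmetic $\tfrac{q-2}{q}(1-\beta)=\tfrac{2(q-1)}{q}-\tfrac{d}{p}=\hat p$ checks out, as does the equivalence $\beta<1\iff p>dq/(2(q-1))$. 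Two small points worth tightening: (i) there is a notational collision since you reuse $s$ both for the Sobolev exponent and the time parameter in $\tilde\rho_{y,s}$ and $(s-t)$, which should be resolved by renaming one of them; and (ii) the localization to $W^{1,p}(B_{1/2}^d(y))$ should be made explicit — because $\tilde\rho_{y,s}$ is supported in $B_{1/2}^d(y)$, you should restrict all spatial integrals to that ball before invoking Hölder and Meyers--Ziemer with a single cutoff of that scale, rather than the full partition of unity of Lemma~\ref{lemma4.6}. The $p>d$ case via Morrey and the $p=d$ case via a slightly smaller exponent $p'<d$ (with blow-up of the Meyers--Ziemer constant explaining the $\hat p$-dependence of the constant) are also handled correctly.
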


\subsection{Proof of Theorem \ref{theorem-density}}\mbox{}\\[0.2cm]
In this section we prove the upper bound of the density of $\mu _t ^\varepsilon$ via the monotonicity formula. The proof is based on \cite{liu-sato-tonegawa, takasao-tonegawa}.
\begin{lemma}\label{lemma4.8}
Assume that $2<q<\infty$ and $p\in[\frac{2d}{d+1} ,\infty) \cap ( \frac{dq}{2(q-1)}, \infty)$. Then there exist $\tilde c\geq 2$, $c'>0$ and $\epsilon_1>0$ with the following property. For $0 \leq t_1 <t_2<T$ with $t_2 -t_1 <1$, suppose $D(t_2) =\tilde c D(t_1)$ and $ D(t) < D(t_2) $ for $t_1 \leq t <t_2$. Then for any $0<\varepsilon <\epsilon_1$, we have
\begin{equation}
(t_2 -t_1)^{\hat{p}} ( \| u^\varepsilon \|^2 _{L^q ([t_1,t_2] ; (W^{1,p} (\Omega))^d)} + \| g^\varepsilon \|^2 _{L^q ([t_1,t_2] ; W^{1,p} (\Omega))} )\geq c',
\label{eq4.21}
\end{equation}
where $\hat{p} >0$ is as Lemma \ref{lem4.7}.
\end{lemma}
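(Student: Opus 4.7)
The plan is to apply the monotonicity formula of Lemma \ref{propmono} over $[t_1,t_2]$ against a localized backward heat kernel centered where $\mu^\varepsilon_{t_2}$ concentrates, and to show that the density jump $D(t_2)/D(t_1)=\tilde c$ forces the $f^\varepsilon$-contribution on the right-hand side to carry a fixed amount of mass, which through Lemma \ref{lem4.7} delivers the claimed lower bound on the $L^qW^{1,p}$ norms of $u^\varepsilon$ and $g^\varepsilon$.

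First I would choose $y_0 \in \Omega$ and $r_0 \in (0,1/8]$ (taking $r_0$ comparable to $1$ in the degenerate case $D(t_2)=\mu^\varepsilon_{t_2}(\Omega)$) such that $\mu^\varepsilon_{t_2}(B^d_{r_0}(y_0)) \geq \tfrac{1}{2}D(t_2)\omega_{d-1}r_0^{d-1}$, which exists by the very definition of $D(t_2)$. Setting $s:=t_2+r_0^2$, a direct Gaussian evaluation on the ball $B^d_{r_0}(y_0)\subset\{\eta(\cdot-y_0)=1\}$ yields the lower bound
\[
\int_{\mathbb{R}^d}\tilde\rho_{y_0,s}(x,t_2)\,d\mu^\varepsilon_{t_2}(x)\ \geq\ \frac{e^{-1/4}}{(4\pi r_0^2)^{(d-1)/2}}\mu^\varepsilon_{t_2}(B^d_{r_0}(y_0))\ \geq\ c_dD(t_2).
\]
A standard dyadic-annulus decomposition about $y_0$ combined with $\mu^\varepsilon_{t_1}(B^d_\rho(y_0))/(\omega_{d-1}\rho^{d-1})\leq D(t_1)$ gives the complementary upper bound $\int\tilde\rho_{y_0,s}(x,t_1)\,d\mu^\varepsilon_{t_1}(x)\leq C_dD(t_1)$.

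Next I would integrate \eqref{monoton2} over $[t_1,t_2]$ and estimate the right-hand side. Using $|\nabla r^\varepsilon|\leq 1$, the bound $|f^\varepsilon|^2\leq 3(|u^\varepsilon|^2+|g^\varepsilon|^2+(L^\varepsilon r^\varepsilon)^2)$ splits the $f^\varepsilon$-term. Lemma \ref{lem4.7} directly controls the $|u^\varepsilon|^2+|g^\varepsilon|^2$ contribution by $C(t_2-t_1)^{\hat p}\bigl(\|u^\varepsilon\|^2_{L^qW^{1,p}}+\|g^\varepsilon\|^2_{L^qW^{1,p}}\bigr)\sup_{[t_1,t_2]}D(t)$. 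For the $(L^\varepsilon r^\varepsilon)^2$ term, the non-positivity of the discrepancy from Lemma \ref{lem-negative} gives $d\mu^\varepsilon_t\leq(2/\sigma)W(\varphi^\varepsilon)\varepsilon^{-1}\,dx$; combined with \eqref{eq4.8} and the fact that $\int\tilde\rho_{y_0,s}(\cdot,t)\,dx$ is uniformly bounded for $t\in[t_1,t_2]$, this piece contributes at most $C\varepsilon^{1-2\gamma}$ after integration in time. The cutoff error from $\tilde\rho$ is controlled by $CD(t_2)\int_{t_1}^{t_2}e^{-1/(128(s-t))}\,dt$, a controllable constant multiple of $D(t_2)$.

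Combining these estimates, dividing by $D(t_2)\geq 1$, and using $D(t_1)=D(t_2)/\tilde c$ produces an inequality of the form
\[
c_d\ \leq\ \frac{C_d}{\tilde c}+C(t_2-t_1)^{\hat p}\bigl(\|u^\varepsilon\|^2_{L^qW^{1,p}}+\|g^\varepsilon\|^2_{L^qW^{1,p}}\bigr)+C\varepsilon^{1-2\gamma}+C_{\mathrm{loc}}.
\]
Choosing $\tilde c$ large enough so that $C_d/\tilde c\leq c_d/4$ and $\epsilon_1$ small enough so that $C\varepsilon^{1-2\gamma}\leq c_d/4$ for $\varepsilon\in(0,\epsilon_1)$, the lower bound \eqref{eq4.21} follows provided the localization error $C_{\mathrm{loc}}$ is likewise absorbed. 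Controlling $C_{\mathrm{loc}}$ is the main obstacle: since the cutoff error vanishes neither as $\varepsilon\to 0$ nor as $\tilde c\to\infty$, I expect to have to either sharpen the cutoff $\eta$ so that the boundary contribution from $\nabla\eta$ is suppressed by a stronger exponential factor, or iterate the monotonicity argument on subintervals of $[t_1,t_2]$ on which $s-t$ can be made genuinely small. Once $C_{\mathrm{loc}}$ is tamed, $c'$ is read off directly as a fixed fraction of $c_d/C$.
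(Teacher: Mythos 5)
Your overall strategy (density doubling forces the $f^\varepsilon$-contribution in the monotonicity formula to carry mass, which Lemma~\ref{lem4.7} converts into a lower bound on $(t_2-t_1)^{\hat p}A$) is the right one and matches the paper's proof in its essential case. However, two points diverge from the paper, and the second one is where the gap you flag at the end actually lies.

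First, the paper does not run a single monotonicity argument. It splits according to how $D(t_2)$ is achieved: (i) $D(t_2)=\mu^\varepsilon_{t_2}(\Omega)$, (ii) the supremum is achieved with radius $\geq 1/4$, and (iii) the supremum is achieved with radius $<1/4$. Cases (i) and (ii) are dispatched by the \emph{global} energy estimate \eqref{eq4.12} from Lemma~\ref{lem:4.5} -- no backward heat kernel and no cutoff at all. Only case (iii) uses the localized monotonicity formula \eqref{monoton2}. Your attempt to stretch the Gaussian argument to cover ``$r_0$ comparable to $1$'' runs into the fact that $\eta$ is supported in $B_{1/2}$, so the lower bound $\int\tilde\rho\,d\mu^\varepsilon_{t_2}\geq c_dD(t_2)$ degrades or fails when the density is achieved only at scales comparable to the torus; that is exactly why the paper treats (i) and (ii) separately and more cheaply. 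Keeping the three-case split both removes the degenerate-radius issue and keeps the cutoff error confined to the case where $R<1/4$.

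Second, on the localization error $C_{\mathrm{loc}}$: the paper neither sharpens the cutoff nor iterates on subintervals. It simply records that the cutoff contribution is $\Cr{const:3}e^{-\frac{1}{128(s-t_1)}}(t_2-t_1)\sup_{[t_1,t_2]}D(t)\leq\Cr{const:3}(t_2-t_1)D(t_2)$, i.e.\ carries the explicit factor $(t_2-t_1)$, and inserts this directly into the final inequality \eqref{eq4.27}, where it appears as $-\,\tilde c\,\Cr{const:3}(t_2-t_1)$ alongside $-\,\tilde c\,\Cr{const:4}(t_2-t_1)^{\hat p}A$. The point you are missing is that because $t_2-t_1<1$ and $\hat p\leq 1-2/q<1$, one has $(t_2-t_1)\leq(t_2-t_1)^{\hat p}$, so the cutoff term has the same power of $(t_2-t_1)$ (or better) as the term you are trying to bound from below. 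In your notation, $C_{\mathrm{loc}}\leq C(t_2-t_1)\leq C(t_2-t_1)^{\hat p}$, and after dividing through it combines with the $C(t_2-t_1)^{\hat p}A$ term rather than sitting as an unabsorbed constant. This is why no refinement of $\eta$ and no iteration are needed: the error already arrives with the ``right'' small factor, and $\tilde c$, $\epsilon_1$ are only used to kill $\Cr{const:5}/\tilde c$ and the $\varepsilon^{1-2\gamma}$ term. If you write the final inequality after dividing by $D(t_1)$ (as the paper does, exploiting $D(t_1)\geq 1$) rather than by $D(t_2)$, you land directly on the paper's \eqref{eq4.27} and can read off $c'$.

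So: your lower and upper Gaussian bounds and the $|f^\varepsilon|^2$-splitting are all as in the paper, and the bound $\int_{t_1}^{t_2}\int\tilde\rho(L^\varepsilon r^\varepsilon)^2W/\varepsilon\leq C\varepsilon^{1-2\gamma}$ via \eqref{w4} is exactly \eqref{eq4.23}. What you need to add is the three-case reduction so that monotonicity is only invoked when $R<1/4$, and the observation that the cutoff error comes pre-multiplied by $(t_2-t_1)\leq(t_2-t_1)^{\hat p}$, which lets it be absorbed without further work.
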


\begin{proof}
Set $A:= \| u^\varepsilon \|^2 _{L^q ([t_1,t_2] ; (W^{1,p} (\Omega))^d)} + \| g^\varepsilon \|^2 _{L^q ([t_1,t_2] ; W^{1,p} (\Omega))}$. Let $\tilde c \geq 2$ and assume $D(t_2) =\tilde c D(t_1)$ ($\tilde c$ will be chosen later). We consider the following three cases. First we consider the case of $D(t_2) =\mu _{t_2} ^\varepsilon (\Omega)$. By \eqref{eq4.12} we have
\begin{equation*}
\begin{split}
D(t_2) \leq & D(t_1) + \Cr{const:2} \Big\{ (t_2-t_1) ^{1-\frac{2}{q}}AD(t_2) + (t_2-t_1) \varepsilon ^{1-2\gamma }  \Big\}.
\end{split}
\end{equation*}
Therefore we obtain
\begin{equation*}
\begin{split}
D(t_1)\Big(\tilde c- \tilde c\Cr{const:2} (t_2-t_1) ^{\hat{p}}A -1 \Big) \leq D(t_1)\Big(\tilde c- \tilde c\Cr{const:2} (t_2-t_1) ^{1-\frac{2}{q}}A -1 \Big)\leq \Cr{const:2} \varepsilon ^{1-2\gamma },
\end{split}
\end{equation*}
where $\hat{p} \leq 1-\frac{2}{q}$ is used. Thus, we have \eqref{eq4.21}, for sufficiently large $\tilde c \geq 2$ and sufficiently small $\varepsilon >0$.

Next we consider the case of $D(t_2) = \lim_{n \to \infty} \frac{\mu_{t_2}^\varepsilon (B_{r_n} (y)) }{\omega _{d-1} r_n ^{d-1}} $ with $\lim _{n \to \infty} r_n \geq \frac{1}{4}$. Then  there exists $n\geq 1$ such that $r_n \geq \frac15$ and $D(t_2) -\frac{1}{100} \leq \frac{\mu_{t_2}^\varepsilon (B_{r_n} (y)) }{\omega _{d-1} r_n ^{d-1}}$. Therefore we have
\[
\frac{\omega _{d-1}}{5^{d-1}} D(t_2)- \frac{\omega _{d-1}}{5^{d-1} \cdot100} \leq \mu _{t_2}^\varepsilon (\Omega)  .
\]
Hence, by an argument similar to that in the first case, we obtain
\begin{equation*}
\begin{split}
D(t_1)\Big(\tilde c\frac{\omega _{d-1}}{5^{d-1}}- \tilde c\Cr{const:2} (t_2-t_1) ^{\hat{p}}A -1 \Big) 
\leq \Cr{const:2} \varepsilon ^{1-2\gamma } + \frac{\omega _{d-1}}{5^{d-1} \cdot100} .
\end{split}
\end{equation*}
Thus, we have \eqref{eq4.21}, for sufficiently large $\tilde c \geq 2$ and sufficiently small $\varepsilon >0$.

Finally we consider the case of $D(t_2) = \lim_{n \to \infty} \frac{\mu_{t_2} ^\varepsilon (B_{r_n} (y)) }{\omega _{d-1} r_n ^{d-1}} $ with $\lim _{n \to \infty} r_n <\frac{1}{4}$. Then  there exists $n\geq 1$ such that $0<r_n < \frac14$ and 
\begin{equation}
D(t_2) -\frac{1}{100} \leq \frac{\mu_{t_2}^\varepsilon (B_{r_n} (y)) }{\omega _{d-1} r_n ^{d-1}}.
\label{eq4.26}
\end{equation}
Set $R=r_n$ and $s=t_2 +\frac{R^2}{4}$. We compute that
\begin{equation}
\begin{split}
&\int _{\mathbb{R}^d} \tilde \rho _{y,s}(x,t_1) \, d\mu _{t_1} ^{\varepsilon }(x) \leq \frac{1}{(4\pi (s-t_1))^{\frac{d-1}{2}} } \int _{\mathbb{R}^d} e^{-\frac{|x-y|^2}{4(s-t_1)}} \, d\mu _{t_1} ^{\varepsilon } \\
= & \frac{1}{(4\pi (s-t_1))^{\frac{d-1}{2}} } \int _0 ^1 \mu _{t_1} ^{\varepsilon }( \{ x \, | \, e^{-\frac{|x-y|^2}{4(s-t_1)}}>k \}) \, dk \\
= & \frac{1}{(4\pi (s-t_1))^{\frac{d-1}{2}} } \int _0 ^1 \mu _{t_1} ^{\varepsilon}( B_{\sqrt{4(s-t_1)\log k^{-1}}} (y)) \, dk\\
\leq &  \frac{1}{(4\pi (s-t_1))^{\frac{d-1}{2}} } \int _0 ^1 D(t_1) \omega_{d-1} (\sqrt{4(s-t_1)\log k^{-1}})^{d-1}  \, dk \leq \Cl{const:5} D(t_1),
\end{split}
\label{eq4.22}
\end{equation}
where $\Cr{const:5} >0$ is depending only on $d$. By \eqref{w4} we have
\begin{equation}
\begin{split}
&\int _{t_1} ^{t_2} \int _{\mathbb{R}^d} \tilde \rho _{y,s} (L^\varepsilon r^\varepsilon ) ^2 \frac{W(\varphi^\varepsilon)}{\varepsilon} \, dxdt 
\leq  \int _{t_1} ^{t_2} \int _{\mathbb{R}^d} \varepsilon ^{1-2\gamma} \rho _{y,s} (q^{-1} (\varphi ^\varepsilon))^2 W(\varphi^\varepsilon) \, dxdt \\
\leq & 2\sqrt{\pi} \Cr{const:w} \varepsilon ^{1-2\gamma} \int _{t_1} ^{t_2}  (s- t)^{\frac12} \, dt 
\leq 2\sqrt{\pi} \Cr{const:w} \varepsilon ^{1-2\gamma} (s- t_1 )^{\frac12} (t_2 -t_1),
\end{split}
\label{eq4.23}
\end{equation}
where $\int_{\mathbb{R}^{d}} (4\pi (s-t))^{-\frac{1}{2}} \rho \, dx =1$ is used. 
From $0< R <\frac14 $ and $\eta (y-x) =1$ on $B_{\frac14} (y)$, we obtain
\begin{equation}
\begin{split}
& \int _{\mathbb{R}^d} \tilde \rho _{y,s}(x,t_2) \, d\mu _{t_2} ^{\varepsilon}=\int _{\mathbb{R}^d} \eta (y-x) \frac{1}{\pi ^{\frac{d-1}{2}} R^{d-1}} e^{-\frac{|x-y|^2}{R^2}} \, d \mu _{t_2} ^{\varepsilon }\geq \int _{B_R (y)} \frac{1}{\pi ^{\frac{d-1}{2}} R^{d-1}} e^{-\frac{|x-y|^2}{R^2}} \, d \mu _{t_2} ^{\varepsilon}\\
\geq & \int _{B_R (y)} \frac{1}{\pi ^{\frac{d-1}{2}} R^{d-1}} e^{-1} \, d \mu _{t_2} ^{\varepsilon }=\frac{1}{e \pi ^{\frac{d-1}{2}} R^{d-1}}\mu _{t_2}^{\varepsilon} (B_R (y)).
\end{split}
\label{eq4.24}
\end{equation}
By \eqref{monoton2}, \eqref{eq4.16}, \eqref{eq4.17}, \eqref{eq4.22}, \eqref{eq4.23}, and \eqref{eq4.24} we have
\begin{equation}
\begin{split}
& \frac{1}{e \pi ^{\frac{d-1}{2}}} \frac{\mu _{t_2} ^{\varepsilon} (B_R (y))}{\omega ^{d-1} R^{d-1}}  \leq \int _{\mathbb{R}^d} \tilde \rho _{y, s } (x,t_2) \, d\mu ^\varepsilon _{t_2} (x)  \\
\leq & \int _{\mathbb{R}^d} \tilde \rho _{y, s } (x,t_1) \, d\mu ^\varepsilon _{t_1} (x)  + \int _{t_1} ^{t_2} \frac{1}{2\sigma} \int _{\mathbb{R}^d} \tilde \rho _{y,s} (x,t) |f^\varepsilon(x,t)|^2 \frac{W(\varphi ^\varepsilon(x,t))}{\varepsilon}  \, dx dt \\ 
& + \Cr{const:3} \int _{t_1} ^{t_2} e^{-\frac{1}{128(s-t)}} \mu _t ^\varepsilon (B^d _{1/2} (y)) \, dt\\
\leq & \Cr{const:5} D(t_1) +  \Cr{const:w} \varepsilon ^{1-2\gamma} (s- t_1 )^{\frac12} (t_2 -t_1) + \Cr{const:3}   e^{-\frac{1}{128(s-t_1)}}  (t_2 -t_1) \sup _{t\in [t_1,t_2]} D(t) \\
& + \Cr{const:4}(t_2 -t_1)^{\hat{p}}  A \sup _{t\in [t_1,t_2]} D(t)\\
\leq & \Cr{const:5} D(t_1) +  \Cr{const:w}  \sqrt{2} \varepsilon ^{1-2\gamma}+ \Cr{const:3}(t_2 -t_1) D(t_2) + \Cr{const:4} (t_2 -t_1)^{\hat{p}}  A D(t_2),
\end{split}
\label{eq4.25}
\end{equation} 
where $s-t_1 \leq t_2 +\frac{R^2}{4} -t_1 \leq 2$ is used. By \eqref{eq4.26} and \eqref{eq4.25} we have
\begin{equation}
\begin{split}
D(t_1) \Big\{ \tilde c \Big( \frac{ 1 }{e \pi ^{\frac{d-1}{2}}} - \Cr{const:3} (t_2-t_1) - \Cr{const:4} (t_2 -t_1)^{\hat{p}} A \Big) -\Cr{const:5} \Big\}
\leq \Cr{const:w}\sqrt{2} \varepsilon ^{1-2\gamma} +  \frac{ 1 }{100 \cdot e \pi ^{\frac{d-1}{2}}} .
\end{split}
\label{eq4.27}
\end{equation} 
Thus, we have \eqref{eq4.21}, for sufficiently large $\tilde c \geq 2$ and sufficiently small $\varepsilon >0$.
\end{proof}

\bigskip

\begin{proof}[Proof of Theorem \ref{theorem-density}]
We only need to prove (2). Choose $T_b \in (0,1)$ such that 
\begin{equation}
T_b ^{\hat{p}} B \leq c',
\label{eq4.28}
\end{equation}
where $B:= \| u^\varepsilon \|^2 _{L^q ([0,T] ; (W^{1,p} (\Omega))^d)} + \| g^\varepsilon \|^2 _{L^q ([0,T] ; W^{1,p} (\Omega))}$. Note that $T_b$ depends only on $d,p,q$, and $B$, by  Lemma \ref{lemma4.8}. Define 
\[ 
D_1 := D_0 \tilde{c} ^{[T/T_b] +1}, 
\]
where $D_1$ depends only on $d,p,q,B, T, D_0$ and $D_1 \geq 2 D_0$ by $\tilde c \geq 2$. Assume that $\varepsilon \in (0,\epsilon _1)$. Note that we only need to check that
\begin{equation}
D(t) \leq D_0 \tilde c ^{[t/T_b] +1}, \qquad t \in [0,T].
\label{eq4.29}
\end{equation}
Suppose that there exists $t' \in (0,T]$ such that $D(t') > D_0 \tilde c ^{[t'/T_b] +1}$. Then there exists $\tau \in (0,T)$ such that $D(t) \leq D_0 \tilde c ^{[t/T_b] +1} \leq D_1$ for any $t \in [0,\tau ]$ and $D(\tau ) = D_0 \tilde c ^{[\tau /T_b] +1}$. Assume $\tau \in (0,T_b)$. Then we have $D(\tau) = \tilde c D_0$ and $\sup _{t \in [0,\tau]} D(t) \leq \tilde c D_0$. Thus \eqref{eq4.21} implies $\tau ^{\hat{p}} B \geq c'$, where we used Lemma \ref{lemma4.8} with $t_1=0$ and $t_2=\tau$. But this contradicts $\tau <T_b$ and \eqref{eq4.28}. Therefore we have $\tau \geq T_b$. If $\tau \in [T_b ,2T_b)$, then $D(\tau) =D_0 \tilde c^2$ and $D(t)\leq D_0 \tilde c$ for any $t\in[0,T_b)$. Hence there exists $\tau' \in [T_b , \tau)$ such that $D(\tau') =\tilde c D_0$ and $\tau-\tau'<T_b$. By Lemma \ref{lemma4.8} with $t_1=\tau'$ and $t_2=\tau$, we have $(\tau-\tau')^{\hat{p}} B \geq c'$. But this contradicts $\tau- \tau' <T_b$ and \eqref{eq4.28} again. Repeating this argument, we obtain $\tau=T$ and \eqref{eq4.29}.
\end{proof}

\subsection{Proof of Theorem \ref{theorem-existence}}\mbox{}\\[0.2cm]
Finally, we show the existence theorem for \eqref{mcf} in the sense of $L^2$-flow. 
We can easily show the existence of a $L^2$-flow 
by the result of Theorem 3.1 in \cite{mugnai-roger2011}(see Theorem \ref{mr}). 
However, we need to prove $v=h+(u\cdot \nu) \nu +g \nu$ in addition.

\begin{proof}[Proof of Theorem \ref{theorem-existence}]
Fix $T>0$. Because $T_i >T$ for sufficiently large $i\geq 1$, so we may assume $T_i >T$ for any $i\geq 1$. 
By a standard argument similar to that in \cite[Proposition 8.3]{takasao-tonegawa} we obtain (2). 

Set $G^\varepsilon (x,t) := f^\varepsilon (x,t) \sqrt{2W(\varphi ^ \varepsilon (x,t))}$. 
Then Lemma 4.3 and \eqref{density-ineq} imply
\begin{equation*}
\begin{split}
& \int_0 ^T \int _{\Omega} \frac{1}{\varepsilon} |G^\varepsilon |^2  \, dx dt =
2\int _0 ^T \int _{\Omega} |f^\varepsilon |^2 \frac{W(\varphi ^\varepsilon)}{\varepsilon}  \, dx dt\\
 \leq & \Cr{const:1} (D(T) (\|u^\varepsilon \|_{L^2 ((0, T) : (W^{1,p} (\Omega))^d )} ^2 +\|g^\varepsilon \|_{L^2 ( (0,T) : W^{1,p} (\Omega) )} ^2)+ \varepsilon ^{1-2\gamma} T), \qquad \varepsilon \in (0, \epsilon).
\end{split}
\end{equation*}
Note that the right hand side is uniformly bounded, regarding $\varepsilon \in (0,\epsilon)$. In addition, we have $\mu_0 ^\varepsilon (\Omega) \leq D_0 $ for any $\varepsilon>0$. Therefore $\mu _t ^\varepsilon$ and $\varphi^\varepsilon$ satisfy all the assumptions of Theorem \ref{mr}. Theorem \ref{mr} implies (1) and 
there exist $v, \vec{G} \in L^2 _{loc} (0,\infty ; (L^2 (\mu _t))^d)$ such that $\{ \mu_t \}_{t \in [0,\infty)}$ is a $L^2$-flow
$v=h+\vec{G}$ with \eqref{conv-v}, by taking a subsequence $\varepsilon \to 0$. 
Here $\vec{G}$ satisfies 
\begin{equation}
\begin{split}
\lim _{\varepsilon \to 0} \frac{1}{\sigma}\int _{\Omega \times (0,T)} -G^\varepsilon\nabla \varphi^\varepsilon \cdot \Phi  \, dxdt = \int _{\Omega \times (0,T)} \vec{G} \cdot \Phi  \, d\mu
\end{split}
\label{conv-G}
\end{equation}
for any $\Phi \in C_c (\Omega \times[0,T);\mathbb{R}^d)$. We remark that
\begin{equation}
\begin{split}
\frac{1}{\sigma}\int _{\Omega \times (0,T)} -G^\varepsilon\nabla \varphi^\varepsilon \cdot \Phi  \, dxdt
=& \int _0 ^T \int _{\Omega \cap \{ |\nabla \varphi ^\varepsilon (\cdot ,t)| \not=0 \}} \Big( u^\varepsilon \cdot \frac{\nabla \varphi^\varepsilon}{|\nabla \varphi^\varepsilon|} \Big) \Big( \frac{\nabla \varphi^\varepsilon}{|\nabla \varphi^\varepsilon|} \cdot \Phi\Big) \, d\tilde  \mu ^\varepsilon _t dt\\
&+ \int _0 ^T \int _{\Omega \cap \{ |\nabla \varphi ^\varepsilon (\cdot,t) |\not =0 \}} g^\varepsilon \frac{\nabla \varphi^\varepsilon}{|\nabla \varphi^\varepsilon|} \cdot \Phi  \, d\hat \mu _t ^\varepsilon dt\\
&+\frac{1}{\sigma} \int _0 ^T \int _\Omega L^\varepsilon r^\varepsilon 
\sqrt{2W(\varphi ^\varepsilon)} \nabla \varphi ^\varepsilon \cdot \Phi \, dxdt,
\end{split}
\label{conv-G2}
\end{equation}
where $d \tilde \mu _t ^\varepsilon := \frac{\varepsilon}{\sigma} |\nabla \varphi ^\varepsilon| ^2 dx $ and 
$d\hat \mu _t ^\varepsilon := \frac{1}{\sigma} \sqrt{2W (\varphi ^\varepsilon)} |\nabla \varphi ^\varepsilon| dx$.
We compute the third term of the right hand side. We have
\begin{equation*}
\begin{split}
&\Big| \frac{1}{\sigma} \int _0 ^T \int _\Omega L^\varepsilon r^\varepsilon 
\sqrt{2W(\varphi ^\varepsilon)} \nabla \varphi ^\varepsilon \cdot \Phi \, dxdt \Big| \\
\leq &\frac{1}{\sigma} \|\Phi\|_\infty
\Big( \int_0 ^T \int _\Omega (L^\varepsilon r^\varepsilon)^2 \frac{2W}{\varepsilon} \, dxdt\Big)^{\frac12}
\Big( \int_0 ^T \int _\Omega \varepsilon |\nabla \varphi ^\varepsilon |^2 \, dxdt\Big)^{\frac12}\\
\leq &\frac{1}{\sigma} \|\Phi\|_\infty
\Big( \int_0 ^T \int _\Omega (L^\varepsilon r^\varepsilon)^2 \frac{2W}{\varepsilon} \, dxdt\Big)^{\frac12}
\Big( \int_0 ^T 2D(t) \, dt\Big)^{\frac12}.
\end{split}
\end{equation*}
Hence, \eqref{density-ineq} and \eqref{eq4.8} imply 
\begin{equation}
\lim _{\varepsilon \to 0}\frac{1}{\sigma} \int _0 ^T \int _\Omega L^\varepsilon r^\varepsilon 
\sqrt{2W(\varphi ^\varepsilon)} \nabla \varphi ^\varepsilon \cdot \Phi \, dxdt=0.
\label{vanish-L}
\end{equation}

Now we show (3). The estimates \eqref{density-ineq} and \eqref{eq4.12} give
\[ \sup _{\varepsilon \in (0,\epsilon)} \int _{0} ^{T} \int_\Omega \varepsilon \Big( \Delta \varphi ^\varepsilon -\frac{W'(\varphi ^\varepsilon)}{\varepsilon^2} \Big)^2 \, dxdt <\infty \]
for any $\varepsilon \in (0,\epsilon)$. Hence Fatou's lemma implies
\[
\liminf _{\varepsilon \to 0} \int_\Omega \varepsilon \Big( \Delta \varphi ^\varepsilon (x,t) -\frac{W'(\varphi ^\varepsilon (x,t))}{\varepsilon^2} \Big)^2 \, dx <\infty, \quad \text{a.e.} \ t\in (0,T).
\]
Therefore, by Theorem \ref{rs}, $\xi _t ^\varepsilon \to 0$ a.e. $t$. Thus we obtain (3).

Next we show (4). Fix $\delta > 0$ and $ i \geq 1$ such that $\| u^{\varepsilon_i} - u \|^2 _{L^q ([0,T] ; (W^{1,p} (\Omega))^d)} <\delta $. Set $\hat u:= u^{\varepsilon_i}$. For any $\Phi \in C_c (\Omega \times[0,T);\mathbb{R}^d)$ we have
\begin{equation}
\begin{split}
&\Big|  \int _{\Omega \times (0,\infty)} u \cdot \Phi  \, d\mu - \frac{1}{\sigma}\int _{\Omega \times (0,\infty)} u^\varepsilon \cdot \Phi  \, \varepsilon |\nabla \varphi^\varepsilon|^2dxdt \Big| \\
\leq &  \|\Phi \|_\infty  \int _{\Omega \times (0,T)} |u - \hat u| \, d\mu + \Big| \int _{\Omega \times (0,\infty)}  \hat u \cdot \Phi \, d\mu - \frac{1}{\sigma}\int _{\Omega \times (0,T)} u^\varepsilon \cdot \Phi  \, \varepsilon |\nabla \varphi^\varepsilon|^2dxdt \Big|   \\
\leq & C\delta + \Big| \int _{\Omega \times (0,\infty)}  \hat u \cdot \Phi \, d\mu - \frac{1}{\sigma}\int _{\Omega \times (0,T)} \hat u \cdot \Phi  \, \varepsilon |\nabla \varphi^\varepsilon|^2dxdt \Big| \\
& +\Big| \frac{1}{\sigma}\int _{\Omega \times (0,T)} \hat u \cdot \Phi  \, \varepsilon |\nabla \varphi^\varepsilon|^2dxdt - \frac{1}{\sigma}\int _{\Omega \times (0,T)} u^\varepsilon \cdot \Phi  \, \varepsilon |\nabla \varphi^\varepsilon|^2dxdt \Big|  \\
=:& C\delta +I_1 +I_2,
\end{split}
\label{eq4.31}
\end{equation}
where \eqref{mzineq2} is used and $C>0$ depends only on $d,p,q,D(T), \|\Phi \|_\infty$. By $\xi _t ^\varepsilon \to 0$ a.e. $t$, $d \tilde \mu _t ^\varepsilon := \frac{\varepsilon}{\sigma} |\nabla \varphi ^\varepsilon| ^2 dx \to d\mu _t$ a.e. $t$. Thus $I_1 \to 0$ as $\varepsilon \to 0$. Moreover, for sufficiently small $\varepsilon >0$, the Cauchy-Schwarz inequality gives $|I_2| \leq C \delta$, where $C>0$ depends only on $D(T), \|\Phi \|_\infty$. Hence we obtain (4). 

Next we prove (5).  
First we show that $\hat \mu_t ^\varepsilon \to \mu _t $ as Radon measures for a.e. $t$. 
We compute
\[
\left| \frac{\varepsilon |\nabla \varphi ^\varepsilon |^2 }{2} + \frac{W(\varphi ^\varepsilon)}{\varepsilon} 
-\sqrt{2W (\varphi ^\varepsilon)} |\nabla \varphi ^\varepsilon| \right|
\leq \left( 
\sqrt{ \frac{\varepsilon |\nabla \varphi ^\varepsilon |^2 }{2} } -
\sqrt{ \frac{W(\varphi ^\varepsilon)}{\varepsilon} }
\right)^2
\leq \left| \frac{\varepsilon |\nabla \varphi ^\varepsilon |^2 }{2} - \frac{W(\varphi ^\varepsilon)}{\varepsilon} \right|.
\]
Therefore  $\xi _t ^\varepsilon \to 0$ implies 
$\hat \mu _t ^\varepsilon  \to \mu _t$ a.e. $t$. 
By \eqref{density-ineq} and \eqref{mzineq2} we have
\begin{equation}
\sup _{\varepsilon \in (0,\epsilon) }\int _0 ^T \int _{\Omega} |g^\varepsilon| ^2 \, d \hat \mu _t ^\varepsilon < \infty.
\label{eq4.35}
\end{equation}
Hence there exists a vector valued function $\tilde{g}$ such that
\begin{equation}
\begin{split}
\lim _{\varepsilon \to 0} \int _0 ^T \int _{\Omega \cap \{ |\nabla \varphi ^\varepsilon (\cdot,t) |\not =0 \}} g^\varepsilon \frac{\nabla \varphi^\varepsilon}{|\nabla \varphi^\varepsilon|} \cdot \Phi  \, d\hat \mu _t ^\varepsilon dt 
= \int _{\Omega \times (0,T)} \tilde g \cdot \Phi \, d\mu
\end{split}
\label{eq4.38}
\end{equation}
for any  $\Phi \in C_c (\Omega \times[0,T);\mathbb{R}^d)$ (see \cite[Theorem 4.4.2]{hutchinson}). 
Thus we obtain (5).

Finally we show (6). By \eqref{conv-G}, \eqref{conv-G2}, \eqref{vanish-L}, and \eqref{eq4.38},
we only need to prove \eqref{eq:3.8}, $\spt \tilde g \subset \partial ^{\ast} \{ \psi =1 \}$, and
\begin{equation}
\lim _{\varepsilon \to 0} \int _0 ^T \int _{\Omega \cap \{ |\nabla \varphi ^\varepsilon (\cdot ,t)| \not=0 \}} \Big( u^\varepsilon \cdot \frac{\nabla \varphi^\varepsilon}{|\nabla \varphi^\varepsilon|} \Big) \Big( \frac{\nabla \varphi^\varepsilon}{|\nabla \varphi^\varepsilon|} \cdot \Phi\Big) \, d\tilde  \mu ^\varepsilon _t dt= \int _0 ^T \int _{\Omega} (\text{Id} -T_{x} \mu _t ) u \cdot \Phi \, d\mu
\label{eq4.30}
\end{equation}
for any  $\Phi \in C_c (\Omega \times[0,T);\mathbb{R}^d)$. 
Set $\nu ^\varepsilon :=  \frac{\nabla \varphi^\varepsilon}{|\nabla \varphi^\varepsilon|}$. We compute
\begin{equation*}
\begin{split}
&\int _0 ^T \int _{\Omega \cap \{ |\nabla \varphi ^\varepsilon (\cdot ,t)| \not=0 \}} (u^\varepsilon \cdot \nu ^\varepsilon) (\nu ^\varepsilon \cdot \Phi) \, d\tilde  \mu ^\varepsilon _t dt\\
=& \int _0 ^T \int _{\Omega \cap \{ |\nabla \varphi ^\varepsilon (\cdot ,t)| \not=0 \}} (u^\varepsilon -(\text{Id} -\nu ^\varepsilon \otimes \nu ^\varepsilon) u ^\varepsilon ) \cdot \Phi \, d\tilde  \mu ^\varepsilon _t dt.
\end{split}
\end{equation*}
Note that by the definition of the varifold and integrality of $\mu _t$, $\int (\text{Id} -\nu ^\varepsilon \otimes \nu ^\varepsilon) \Psi \, d\mu_t ^\varepsilon  \to \int T_{x} \mu _t \Psi \,  d\mu_t $ for any $\Psi \in C_c (\Omega \times[0,T);\mathbb{R}^d)$. By using this and an argument  similar to \eqref{eq4.31}, we have \eqref{eq4.30}. 

Set $k(s) := \int _0 ^s \sqrt{2W(\tau)} \, d\tau$.
Recall that $\psi =\lim_{\varepsilon \to 0} \frac{1}{2} (\varphi ^\varepsilon +1) $, $\varphi ^\varepsilon \to \pm 1$ a.e. on $\Omega \times (0,\infty)$, and
\begin{equation} 
\lim_{\varepsilon \to 0} k(\varphi ^\varepsilon )=\lim_{\varepsilon \to 0} \int_0 ^{\varphi^\varepsilon} \sqrt{2W(s)} \, ds=\sigma\Big(\psi-\frac{1}{2}\Big) \quad\text{ a.e. on } \ \Omega \times (0,\infty).
\label{eq4.36}
\end{equation}
By \eqref{eq4.36}, for any $\Phi \in C_c ^1 (\Omega ;\mathbb{R}^d)$ and $t\geq 0$, we have
\begin{equation} 
\lim _{\varepsilon \to 0} \int_{\mathbb{R}^d} \div \Phi k(\varphi ^\varepsilon ) \, dx = \int_{\mathbb{R}^d} \div \Phi \sigma\Big(\psi-\frac{1}{2}\Big) \, dx
=-\sigma \int_{\mathbb{R}^d} \Phi \cdot \nu \, d\| \nabla \psi (\cdot,t) \|,
\label{eq4.37}
\end{equation}
where $\nu (\cdot,t)$ is the inner unit normal vector of $\{ \psi(\cdot,t)=1 \}$ on $\partial ^\ast \{ \psi(\cdot,t)=1\}$. 

Fix $\delta > 0$ and $ i \geq 1$ such that $\| g^{\varepsilon_i} - g \|^2 _{L^q ([0,T] ; W^{1,p} (\Omega))} <\delta $. 
Set $\hat g :=g^{\varepsilon_i}$.
For any $\Phi \in C_c ^1 (\Omega \times[0,T);\mathbb{R}^d)$ we have
\begin{equation}
\begin{split}
&\int _{\Omega \times (0,T)} \tilde g \cdot \Phi  \, d\mu=\lim _{\varepsilon \to 0} \frac{1}{\sigma}\int _{\Omega \times (0,T)} g^\varepsilon \sqrt{2W(\varphi^\varepsilon)}\nabla \varphi^\varepsilon \cdot \Phi  \, dxdt \\
=& \lim _{\varepsilon \to 0} \frac{1}{\sigma}\int _{\Omega \times (0,T)} g^\varepsilon \nabla k(\varphi^\varepsilon) \cdot \Phi  \, dxdt
=-\lim _{\varepsilon \to 0} \frac{1}{\sigma}\int _{\Omega \times (0,T)} k(\varphi^\varepsilon)  \div (g^\varepsilon \Phi) \, dxdt.
\end{split}
\label{req4.32}
\end{equation}
By \eqref{eq4.37}, the Radon-Nikodym theorem, we have
\begin{equation}
\begin{split}
-\lim _{\varepsilon \to 0} \frac{1}{\sigma}\int _{\Omega \times (0,T)} k(\varphi^\varepsilon)  \div (\hat g \Phi) \, dxdt
=\int _{\Omega\times (0,T)} \hat g \frac{1}{\theta} \nu  \cdot \Phi \, d\mu
\end{split}
\label{limitg}
\end{equation}
for any $\Phi \in C_c ^1 (\Omega \times[0,T);\mathbb{R}^d)$. 
Here $\theta : \spt \mu \to \mathbb{N}$ is defined by 
\[
  \theta = \begin{cases}
   \Big( \frac{d\|\nabla\psi(\cdot,t)\|}{d\mu_t} \Big)^{-1} & \text{if} \ (x,t) \in \partial ^\ast \{ \psi = 1 \} ,\\
   \qquad \infty & \text{otherwise},
  \end{cases}
\]
where $\frac{1}{\theta} =0$ if $\theta=\infty$, and $ \frac{d\|\nabla\psi(\cdot,t)\|}{d\mu_t}$ is the Radon-Nikodym Derivative.
We compute
\begin{equation*}
\begin{split}
&\Big| \int _{\Omega\times (0,T)} g \nu  \cdot \Phi \, d\|\nabla \psi (\cdot,t) \| dt
-\frac{1}{\sigma}\int _{\Omega \times (0,T)} g^\varepsilon \sqrt{2W(\varphi^\varepsilon)}\nabla \varphi^\varepsilon \cdot \Phi  \, dxdt \Big|\\
=&\Big| \int _{\Omega\times (0,T)} g \frac{1}{\theta} \nu  \cdot \Phi \, d\mu 
-\frac{1}{\sigma}\int _{\Omega \times (0,T)} g^\varepsilon \sqrt{2W(\varphi^\varepsilon)}\nabla \varphi^\varepsilon \cdot \Phi  \, dxdt \Big|\\
\leq & \|\Phi \|_\infty \int _{\Omega\times (0,T)} |g-\hat g| \, d\mu +
\Big| \int _{\Omega\times (0,T)} \hat g \frac{1}{\theta} \nu  \cdot \Phi \, d\mu -\frac{1}{\sigma}\int _{\Omega \times (0,T)} g^\varepsilon \sqrt{2W(\varphi^\varepsilon)}\nabla \varphi^\varepsilon \cdot \Phi  \, dxdt \Big| \\
\leq & \|\Phi \|_\infty \int _{\Omega\times (0,T)} |g-\hat g| \, d\mu +
\Big| \int _{\Omega\times (0,T)} \hat g \frac{1}{\theta} \nu  \cdot \Phi \, d\mu + \frac{1}{\sigma}\int _{\Omega \times (0,T)} k(\varphi ^\varepsilon ) \div (\hat g \Phi )  \, dxdt \Big| \\
&+\Big| - \frac{1}{\sigma}\int _{\Omega \times (0,T)} k(\varphi ^\varepsilon ) \div (\hat g \Phi )  \, dxdt -\frac{1}{\sigma}\int _{\Omega \times (0,T)} g^\varepsilon \sqrt{2W(\varphi^\varepsilon)}\nabla \varphi^\varepsilon \cdot \Phi  \, dxdt \Big|\\
=: & J_1 + J_2 + J_3.
\end{split}
\end{equation*}
By \eqref{mzineq2} we have $J_1 \leq C \delta$ and \eqref{limitg} implies $J_2 \to 0$ as $\varepsilon \to 0$. By \eqref{mzineq2} and the integration by parts, we have
\[
J_3 \leq C \| g^{\varepsilon_i} - \hat g \|^2 _{L^q ([0,T] ; W^{1,p} (\Omega))} \leq C(\delta + \| g^{\varepsilon_i} - g \|^2 _{L^q ([0,T] ; W^{1,p} (\Omega))}),
\]
where $C>0$ depends only on $d,p,q,D(T), \|\Phi \|_\infty$. Therefore we obtain
\begin{equation}
\int _{\Omega\times (0,T)} g \nu  \cdot \Phi \, d\|\nabla \psi (\cdot,t) \| dt
=
\int _{\Omega\times (0,T)} g \frac{1}{\theta} \nu  \cdot \Phi \, d\mu 
= \lim _{\varepsilon \to 0}\frac{1}{\sigma}\int _{\Omega \times (0,T)} g^\varepsilon \sqrt{2W(\varphi^\varepsilon)}\nabla \varphi^\varepsilon \cdot \Phi  \, dxdt.
\label{eq4.34}
\end{equation}
By \eqref{req4.32} and \eqref{eq4.34} we have \eqref{eq:3.8} and  $\spt \tilde g \subset \partial ^{\ast} \{ \psi =1 \}$.
\end{proof}
\section{Appendix}
\subsection{Meyers-Ziemer inequality}\mbox{}\\[0.2cm]
Let $\mu$ be a Radon measure on $\mathbb{R}^d$ and $f :\mathbb{R}^d \to \mathbb{R}$ be a given function. To define $\mu$-measurable $f$ as a trace function, we use the following inequality:
\begin{theorem}[Meyers-Ziemer inequality]\label{thmmz}
For a Radon measure $\mu$ on $\mathbb{R}^d$ with \\$D= \sup _{r>0 ,x\in \mathbb{R}^d}\frac{\mu (B_r ^d (x))}{\omega _{d-1} r^{d-1}}$ and $1\leq p<d$,
\begin{equation}
\int _{\mathbb{R}^d} |f| ^{\frac{p(d-1)}{d-p}} \, d\mu \leq c_{MZ} D\Big( \int _{\mathbb{R}^d} |\nabla f |^p \, dx \Big)^{\frac{d-1}{d-p}}
\label{mzineq}
\end{equation}
for $f \in C^1 _c (\mathbb{R}^d)$. Here $c_{MZ} = c_{MZ}(d,p)$. See \cite{meyers-Ziemmer1977} and \cite{ziemer1989} for $p=1$. 
\end{theorem}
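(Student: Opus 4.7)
The plan is to follow the classical potential-theoretic approach of Adams, using the Riesz potential representation of $f$ together with the $(d-1)$-dimensional density bound on $\mu$. First I would use the identity
\[ f(x) = c_d \int_{\mathbb{R}^d} \frac{(x-y)\cdot \nabla f(y)}{|x-y|^d}\, dy \]
valid for $f \in C^1_c(\mathbb{R}^d)$, to obtain the pointwise bound $|f(x)| \leq c_d I_1(|\nabla f|)(x)$, where $I_1 g(x) := \int |x-y|^{1-d} g(y)\, dy$ is the Riesz potential of order $1$. This reduces the question to a trace inequality for $I_1$ against the measure $\mu$.

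Next I would establish Hedberg's pointwise interpolation inequality
\[ I_1 g(x) \leq C\bigl[\,r\,(Mg)(x) + r^{1-d/p}\,\|g\|_{L^p}\,\bigr] \]
for every $r>0$, obtained by splitting the integral into the regions $|x-y|<r$ and $|x-y|\geq r$ and applying the trivial maximal-function estimate and H\"older's inequality respectively. Balancing the two terms by optimizing in $r$ yields $I_1 g(x) \leq C\,(Mg)(x)^{(d-p)/d}\,\|g\|_{L^p}^{p/d}$. Applying this with $g=|\nabla f|$ and raising to the power $q:=p(d-1)/(d-p)$ reduces the claim to bounding $\int_{\mathbb{R}^d} (M(|\nabla f|))^{p(d-1)/d}\, d\mu$ by a constant multiple of $D\,\|\nabla f\|_{L^p}^{p(d-1)/d}$.

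The main obstacle is this last estimate for the maximal operator integrated against a measure that is only $(d-1)$-dimensional rather than $d$-dimensional. I would handle it via the layer-cake identity $\int (Mh)^s\, d\mu = s\int_0^\infty t^{s-1}\,\mu(\{Mh>t\})\, dt$ with $s=p(d-1)/d$, together with a Vitali covering of the level set $\{Mh>t\}$ by balls $B_{r_i}(x_i)$ satisfying $\dashint_{B_{r_i}(x_i)} |h|\, dy > t$. The density hypothesis gives $\mu(B_{5 r_i}(x_i)) \leq D\,\omega_{d-1}(5r_i)^{d-1}$, and the ball-average bound $t\,\omega_d\,r_i^{d} \leq \int_{B_{r_i}(x_i)} |h|\, dy$ together with disjointness of the Vitali subfamily and H\"older's inequality with exponents $(d-1)/d$ and $1/d$ delivers the required estimate. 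The delicate point is matching exponents so that the right-hand side comes out exactly as $D\,\|h\|_{L^p}^s$ with no residual loss; the endpoint $p=1$ is a particular case for which a more direct argument via the coarea formula, bounding $\mu(\{|f|>t\})$ by covering $\partial\{|f|>t\}$ with balls and applying the density bound, also works.
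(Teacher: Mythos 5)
The paper does not prove Theorem~\ref{thmmz} at all; it simply cites \cite{meyers-Ziemmer1977} and \cite{ziemer1989}, so there is no in-paper argument to compare yours against. Evaluating your outline on its own merits: the Riesz-potential/Hedberg reduction is indeed the start of Adams' classical proof of this trace theorem, but the step you are reduced to --- the strong-type bound
\[
\int_{\mathbb{R}^d} \bigl(M|\nabla f|\bigr)^{s}\, d\mu \;\leq\; C\,D\,\|\nabla f\|_{L^p}^{s}, \qquad s=\tfrac{p(d-1)}{d},
\]
which is what the layer-cake identity plus your Vitali argument would have to deliver --- is false, and you correctly flag the exponent-matching as ``the delicate point.'' A counterexample in $d=2$: let $\mu=\mathcal H^1\lfloor\{y=0\}$ and $h=\mathbf 1_{[0,1]\times[-\epsilon,\epsilon]}$. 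Then $Mh\gtrsim 1$ on a unit-length segment of the axis, so the left side is $\gtrsim 1$, while $\|h\|_{L^p}^{s}=\bigl((2\epsilon)^{1/p}\bigr)^{s}\to 0$ as $\epsilon\to 0$. Concretely, your Vitali/disjointness argument controls $\sum_i r_i^{d}\leq t^{-p}\omega_d^{-1}\|h\|_{L^p}^{p}$, but what you need is $\sum_i r_i^{d-1}$, and passing from the first to the second by H\"older with exponents $d/(d-1)$ and $d$ introduces a factor $N^{1/d}$ ($N$ the number of Vitali balls) that is unbounded; this is not a presentational issue but the reason the estimate fails.

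The reason the theorem is nonetheless true for $p>1$ is that Hedberg's pointwise bound loses information exactly where it matters: near the support of $h$ one has $I_1h\sim\epsilon\cdot\|h\|_\infty$ but $Mh\sim\|h\|_\infty$, so replacing $I_1$ by $(M\cdot)^{(d-p)/d}\|\cdot\|_p^{p/d}$ overcharges. Adams closes the argument by proving a weak-type bound $\mu(\{I_1 g>t\})\leq C\,D\,(\|g\|_{L^p}/t)^{q}$ directly for the potential (not for $M$), for which the softer near-field behaviour of $I_1$ is precisely what saves the endpoint, and then running Marcinkiewicz interpolation across neighbouring pairs $(p,q(p))$, using that $q(p)=p(d-1)/(d-p)$ is strictly increasing so the marginal exponents straddle the target. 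Your fallback for $p=1$ via the coarea formula and the isoperimetric-type bound $\mu(U)\lesssim D\,\mathcal H^{d-1}(\partial U)$ is the correct route and is exactly the content of the Meyers--Ziemer reference the paper cites; so the proposal is salvageable, but as written it rests on a maximal-function estimate that does not hold.
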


Set $\mu _t := \lim_{\varepsilon \downarrow 0} \mu _t ^\varepsilon $ and $D_T := \sup _{t\in [0,T), r>0 ,x\in \mathbb{R}^d}\frac{\mu_t (B_r ^d (x))}{\omega _{d-1} r^{d-1}}$. Note that, to make sense of the Brakke's inequality or the convergences (4)--(6) in Theorem \ref{theorem-existence}, we only need to define the transport term and forcing term as functions in $L^2 _{loc} (\mu _t \times dt)$. By H{\"o}lder inequality and \eqref{mzineq} we have
\begin{equation}
\begin{split}
\int _{\mathbb{R}^d} |f| ^{2} \, d\mu_t \leq &
\left( \int _{\mathbb{R}^d} |f| ^{\frac{p(d-1)}{d-p}} \, d\mu_t \right)^{\frac{2(d-p)}{p(d-1)}} (\mu _t (\spt f )) ^{\frac{pd+p-2d}{p(d-1)}} \\
\leq & (c_{MZ} D_T )^{\frac{2(d-p)}{p(d-1)}} \Big( \int _{\mathbb{R}^d} |\nabla f |^p \, dx \Big)^{\frac{2}{p}} (\mu _t (\spt f )) ^{\frac{pd+p-2d}{p(d-1)}}
\end{split}
\label{mzineq2}
\end{equation}
for any $f \in C_c ^1 (\mathbb{R}^d)$. To justify \eqref{mzineq2}, we need $\frac{p(d-1)}{d-p} \geq 2$. So we need to assume
\begin{equation}
p\geq \frac{2d}{d+1}
\label{assumption-p}
\end{equation}
for \eqref{mzineq2}.

\subsection{Existence theorem for $L^2$-flow}\mbox{}\\[0.2cm]
Let $U\subset \mathbb{R}^d$ be an open set, $\varphi ^{\varepsilon} \in C^2 (U)$ for $\varepsilon \in (0,1)$ and $\{ \varepsilon _i \}_{i=1} ^\infty$ be a positive sequence with $\varepsilon _i \to 0$. Define $\mu ^\varepsilon  (\phi):=\frac{1}{\sigma}\int _U \phi \Big( \frac{\varepsilon |\nabla \varphi ^\varepsilon|^2}{2}+\frac{W(\varphi^\varepsilon)}{\varepsilon}\Big) \, dx$ and $\xi ^\varepsilon  (\phi):=\frac{1}{\sigma}\int _U \phi \Big( \frac{\varepsilon |\nabla \varphi^\varepsilon|^2}{2}-\frac{W(\varphi^\varepsilon)}{\varepsilon}\Big) \, dx$, where $\sigma:= \int _{-1} ^1 \sqrt{2W(s)} \, ds$. The following theorem is useful for showing the vanishing of the discrepancy measure and the integrality of the limit measure:
\begin{theorem}[\cite{roger-schatzle}]\label{rs}
Assume that $d=2,3$ and
\[ \liminf_{i\to \infty}  \mu ^{\varepsilon_i} (U) <\infty, \quad \liminf_{i\to \infty}  \int _U \varepsilon_i \Big( \Delta \varphi ^{\varepsilon_i} -\frac{W'(\varphi^{\varepsilon_i} )}{\varepsilon_i ^2}\Big)^2 \, dx<\infty \] and 
\[ \mu^{\varepsilon_i} \to \mu \quad \text{as Radon measures.}\]
Then the following hold:
\begin{enumerate}
\item $ |\xi ^{\varepsilon_i} | \to 0$ as Radon measures.
\item $\mu$ is $(d-1)$-integral.
\item $\int _{U} |h|^2 \, d\mu \leq \frac{1}{\sigma} \liminf_{i\to \infty} \int _U \varepsilon_i \Big( \Delta \varphi^{\varepsilon_i} -\frac{W'(\varphi ^{\varepsilon_i} )}{\varepsilon_i ^2}\Big)^2 \, dx$, where $h$ is the generalized mean curvature vector of $\mu$.
\end{enumerate}
\end{theorem}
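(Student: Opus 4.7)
The plan is to follow the approach of Röger and Schätzle. Write $R^\varepsilon := -\varepsilon\Delta\varphi^\varepsilon + W'(\varphi^\varepsilon)/\varepsilon$, so that the hypothesis gives $\liminf_i \int_U \varepsilon_i^{-1}(R^{\varepsilon_i})^2\,dx < \infty$. The starting point is the stress-energy identity, obtained by computing $\nabla(W(\varphi^\varepsilon)/\varepsilon - \varepsilon|\nabla\varphi^\varepsilon|^2/2)$ via the chain and product rules:
\begin{equation*}
\sigma\,\nabla\xi^\varepsilon = \varepsilon\,\nabla\!\cdot\!\bigl(\nabla\varphi^\varepsilon\otimes\nabla\varphi^\varepsilon\bigr) + \frac{R^\varepsilon}{\varepsilon}\,\varepsilon\nabla\varphi^\varepsilon.
\end{equation*}
Testing this against $X\in C_c^1(U;\mathbb{R}^d)$ yields a weak identity in which $\int\xi^\varepsilon\,\nabla\!\cdot\! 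X\,dx$ is bounded by the $L^2$ norms of $R^\varepsilon/\sqrt{\varepsilon}$ and $\sqrt{\varepsilon}|\nabla\varphi^\varepsilon|$, giving a priori control on $\xi^\varepsilon$ as a distribution of negative order.

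The heart of the proof is (1), the vanishing $|\xi^{\varepsilon_i}|\to 0$. The strategy is to carry out a blow-up analysis at $\mu$-a.e.\ point $x_0$. Rescale $\tilde\varphi^\varepsilon(y) := \varphi^\varepsilon(x_0 + \varepsilon y)$, which satisfies $-\Delta\tilde\varphi^\varepsilon + W'(\tilde\varphi^\varepsilon) = \varepsilon R^\varepsilon$ with $\varepsilon R^\varepsilon$ small in an appropriate sense. On annuli whose scale is chosen so that the $\mu^\varepsilon$-mass is $O(r^{d-1})$, the ODE argument for one-dimensional standing waves combined with the stress identity forces $\tilde\varphi^\varepsilon$ to be close to a heteroclinic profile along a nearly planar level set, which then pins down $\varepsilon|\nabla\varphi^\varepsilon|^2/2 \approx W(\varphi^\varepsilon)/\varepsilon$ and gives $\xi^\varepsilon \to 0$. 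The restriction $d\in\{2,3\}$ enters here through the Sobolev embedding $W^{1,2}\hookrightarrow L^{2(d-1)/(d-2)}$ (interpreted with the standard convention when $d=2$) which is needed to control the nonlinear $\nabla\varphi^\varepsilon \otimes \nabla\varphi^\varepsilon$ term in the stress identity against the only available $L^2$ bound on $R^\varepsilon/\sqrt\varepsilon$.

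Once (1) is proved one has $\mu = \lim \tilde\mu^{\varepsilon_i}$ where $d\tilde\mu^\varepsilon = \sigma^{-1}\varepsilon|\nabla\varphi^\varepsilon|^2\,dx$. For (2) and (3), associate to $\varphi^{\varepsilon}$ the $(d-1)$-varifold $V^\varepsilon$ with weight $\tilde\mu^\varepsilon$ and oriented tangent $\mathrm{Id}-\nu^\varepsilon\otimes\nu^\varepsilon$, $\nu^\varepsilon = \nabla\varphi^\varepsilon/|\nabla\varphi^\varepsilon|$. A direct integration by parts shows that the first variation $\delta V^\varepsilon$ has total variation controlled by $\bigl(\sigma^{-1}\int \varepsilon(R^\varepsilon/\varepsilon)^2\,dx\bigr)^{1/2}\bigl(\tilde\mu^\varepsilon(U)\bigr)^{1/2}$, uniformly in $\varepsilon$, and that a mean-curvature-type vector $\vec h^\varepsilon := -(R^\varepsilon/\varepsilon)\nu^\varepsilon$ realizes the first variation with $\int|\vec h^\varepsilon|^2\,d\tilde\mu^\varepsilon = \sigma^{-1}\int\varepsilon^{-1}(R^\varepsilon)^2\,dx$. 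Passing to a subsequential limit varifold $V$ and invoking Allard's rectifiability theorem produces a rectifiable $V$ with $\|V\| = \mu$ and $L^2$ generalized mean curvature $h$; lower semicontinuity of the $L^2$ norm under varifold convergence gives the inequality in (3). Integrality is then obtained by the standard argument (Hutchinson, Brakke–Allard): at $\mu$-a.e.\ point $x_0$ every tangent measure of $\mu$ is a stationary $(d-1)$-cone with an $L^2$-bounded curvature, hence must be a union of hyperplanes; combined with (1) and the fact that $\nu^\varepsilon$ is a unit vector field, the multiplicity equals a positive integer.

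The main obstacle is unambiguously step (1). In Ilmanen's setting one bypasses this via the pointwise sign $\xi^\varepsilon\leq 0$, which propagates by maximum principle. Here we have only $L^2$ control on $R^\varepsilon$ and must extract the vanishing by a compactness/blow-up argument that genuinely uses $d\leq 3$; controlling oscillations of $\xi^\varepsilon$ on intermediate scales where a priori no sign is available is the delicate point, and it is this step that restricts the theorem to low dimensions.
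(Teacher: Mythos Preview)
The paper does not prove Theorem~\ref{rs} at all: it is quoted in the Appendix as a known result from R\"oger--Sch\"atzle \cite{roger-schatzle} (their Proposition~4.9 and surrounding material) and used as a black box in the proof of Theorem~\ref{theorem-existence}. So there is no ``paper's own proof'' to compare against; your proposal is an attempt to sketch the original R\"oger--Sch\"atzle argument.

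As such a sketch, your outline is broadly on target --- the stress-energy (Modica-type) identity, blow-up to a one-dimensional profile for the discrepancy vanishing, then varifold compactness plus Allard rectifiability and lower semicontinuity for (2) and (3) --- but your account of \emph{where} the restriction $d\le 3$ enters is not quite right. It is not a Sobolev embedding $W^{1,2}\hookrightarrow L^{2(d-1)/(d-2)}$ (that exponent is not the Sobolev exponent, and the nonlinear term is handled differently). The genuine obstruction is the scaling of the Willmore-type energy $\int \varepsilon^{-1}(R^\varepsilon)^2\,dx$: on a ball of radius $r$ it behaves like $r^{d-3}$ for a smooth hypersurface, so only for $d\le 3$ does a bound on this quantity prevent concentration at small scales. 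In R\"oger--Sch\"atzle this shows up in the covering/good-bad decomposition that controls the set where $\varphi^\varepsilon$ fails to be close to a heteroclinic profile; for $d\ge 4$ the energy bound gives no smallness on small balls and the argument breaks down. Your identification of step~(1) as the heart of the matter, and of the absence of a sign on $\xi^\varepsilon$ as the reason one cannot simply mimic Ilmanen, is correct.
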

The following theorem is also useful for prove the existence of the weak solutions for the MCF with forcing term, in the sense of $L^2$-flow.
\begin{theorem}[Theorem 3.1 in \cite{mugnai-roger2011}]\label{mr}
Let $d=2,3$ and $\varphi^{\varepsilon}$ be a solution for the following equation:
\begin{equation}
\left\{ 
\begin{array}{ll}
\varepsilon \varphi ^{\varepsilon} _t =\varepsilon \Delta \varphi ^{\varepsilon} -\dfrac{W' (\varphi^{\varepsilon})}{\varepsilon }+G^{\varepsilon} ,& (x,t)\in \Omega \times (0,\infty).  \\
\varphi ^{\varepsilon} (x,0) = \varphi_0 ^{\varepsilon} (x) ,  &x\in \Omega.
\end{array} \right.
\label{acmr}
\end{equation}
We assume that there exists $\tilde \epsilon >0$ such that
\begin{equation*}
\sup_{\varepsilon \in (0,\tilde\epsilon)} \Big( \mu _0 ^\varepsilon (\Omega) + \int _{\Omega \times (0,T)} \frac{1}{\varepsilon} (G^\varepsilon)^2 \, dxdt\Big) <\infty
\end{equation*}
for any $T>0$. Then there exits a subsequence $\varepsilon \to0$ such that the following hold:
\begin{enumerate}
\item There exists a family of $(d-1)$-integral Radon measures $\{ \mu _t \}_{t\in [0,\infty)}$ on $\Omega$ such that
\begin{enumerate}
\item $\mu ^\varepsilon \to \mu$ as Radon measures on $\Omega \times [0,\infty)$, where $d\mu =d\mu_t dt$.
\item $\mu ^\varepsilon _t \to \mu _t$ as Radon measures on $\Omega$ for all $t \in [0,\infty)$.
\end{enumerate}
\item There exists $\vec{G} \in L^2 _{loc} (0,\infty ; (L^2 (\mu _t))^d)$ such that
\begin{equation*}
\begin{split}
\lim _{\varepsilon \to 0} \frac{1}{\sigma}\int _{\Omega \times (0,\infty)} -G^\varepsilon\nabla \varphi^\varepsilon \cdot \Phi  \, dxdt = \int _{\Omega \times (0,\infty)} \vec{G} \cdot \Phi  \, d\mu
\end{split}
\end{equation*}
for any $\Phi \in C_c (\Omega \times[0,\infty);\mathbb{R}^d)$.
\item $\{ \mu _t \}_{t\in (0,\infty)}$ is an $L^2$-flow with a generalized velocity vector $v=h+\vec{G}$ and
\begin{equation*}
\lim _{\varepsilon \to 0} \int _{\Omega \times (0,\infty)}  v^\varepsilon  \cdot \Phi \, d\mu ^\varepsilon = \int _{\Omega \times (0,\infty)} v \cdot \Phi \, d\mu  
\end{equation*}
for any $\Phi \in C_c (\Omega \times[0,\infty);\mathbb{R}^d)$, where $h$ is the generalized mean curvature vector of $\mu_t$ and
\[
  v^\varepsilon = \begin{cases}
    \frac{-\varphi^\varepsilon _t}{|\nabla\varphi^\varepsilon|} \frac{\nabla\varphi^\varepsilon}{|\nabla\varphi^\varepsilon|} & \text{if} \ |\nabla\varphi^\varepsilon|\not=0 ,\\
    \qquad 0 & \text{otherwise}.
  \end{cases}
\]
\end{enumerate}
\end{theorem}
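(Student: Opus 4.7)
Since Theorem~\ref{mr} is the Mugnai--R\"oger $L^2$-flow construction for Allen--Cahn with an $L^2$-scaled forcing, my plan is to follow a four-stage route: uniform energy estimates from the PDE, extraction of limit measures with enough time regularity to upgrade convergence to every $t$, application of Theorem~\ref{rs} to obtain rectifiability/integrality and vanishing of the discrepancy, and finally verification of the $L^2$-flow axioms through a Brakke-type test of \eqref{acmr}.

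For the energy stage, testing \eqref{acmr} against $\varphi^\varepsilon_t/\sigma$ and integrating by parts in $x$ gives the dissipation identity
\begin{equation*}
\frac{d}{dt}\mu^\varepsilon_t(\Omega)+\frac{1}{\sigma}\int_\Omega\varepsilon(\varphi^\varepsilon_t)^2\,dx=\frac{1}{\sigma}\int_\Omega\varphi^\varepsilon_t G^\varepsilon\,dx,
\end{equation*}
and Young's inequality with the hypothesis on $G^\varepsilon$ yields uniform bounds on $\sup_t\mu^\varepsilon_t(\Omega)$, on $\int_0^T\!\int\varepsilon(\varphi^\varepsilon_t)^2\,dxdt$, and (through the PDE itself) on $\int_0^T\!\int\varepsilon(\Delta\varphi^\varepsilon-W'(\varphi^\varepsilon)/\varepsilon^2)^2\,dxdt$. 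Weak-$\ast$ compactness for Radon measures on $\Omega\times[0,T]$ then produces a subsequential limit $\mu=\mu_t\,dt$. Cauchy--Schwarz applied to $\partial_t\int\phi\,d\mu^\varepsilon_t$ against the dissipation bound gives $C^{1/2}$-H\"older equicontinuity of $t\mapsto\int\phi\,d\mu^\varepsilon_t$ for $\phi\in C^1(\Omega)$, which upgrades the convergence to $\mu^\varepsilon_t\to\mu_t$ for every $t$, proving (1a)--(1b). Fatou then gives $\liminf_\varepsilon\int\varepsilon(\Delta\varphi^\varepsilon-W'/\varepsilon^2)^2\,dx<\infty$ for a.e.\ $t$, so Theorem~\ref{rs} delivers $(d-1)$-integrality of $\mu_t$, $|\xi^\varepsilon_t|\to0$, and the $L^2(\mu_t)$ bound on the generalized mean curvature $h$.

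To construct $\vec G$, note that for every $\Phi\in C_c(\Omega\times[0,T);\mathbb{R}^d)$,
\begin{equation*}
\Big|\frac{1}{\sigma}\int_0^T\!\!\int_\Omega G^\varepsilon\nabla\varphi^\varepsilon\cdot\Phi\,dxdt\Big|^2\le\frac{1}{\sigma}\Big(\int_0^T\!\!\int_\Omega\frac{(G^\varepsilon)^2}{\varepsilon}\,dxdt\Big)\Big(\int_0^T\!\!\int_\Omega|\Phi|^2\,d\mu^\varepsilon_t dt\Big);
\end{equation*}
the right-hand side is uniformly controlled and, by (1a), converges to a multiple of $\int|\Phi|^2\,d\mu$, so the vector distributions $-\sigma^{-1}G^\varepsilon\nabla\varphi^\varepsilon\,dxdt$ admit a weak-$\ast$ limit which is absolutely continuous with respect to $\mu$ and whose density lies in $L^2_{\mathrm{loc}}(0,\infty;(L^2(\mu_t))^d)$, giving (2). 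For the flow axioms, I would run the standard Brakke-type test: testing \eqref{acmr} against $\eta\varphi^\varepsilon_t$ and $\nabla\eta\cdot\nabla\varphi^\varepsilon$ and using $|\xi^\varepsilon_t|\to0$ reduces
\begin{equation*}
\int_0^T\!\!\int_\Omega(\eta_t+\nabla\eta\cdot v^\varepsilon)\,d\mu^\varepsilon_t\,dt
\end{equation*}
to an expression bounded via Cauchy--Schwarz and the energy estimate by $C\|\eta\|_\infty$, which gives \eqref{velo2} in the limit. The pointwise identity $v^\varepsilon=h^\varepsilon-G^\varepsilon\nabla\varphi^\varepsilon/(\varepsilon|\nabla\varphi^\varepsilon|^2)$ on $\{|\nabla\varphi^\varepsilon|\neq 0\}$, obtained by dividing \eqref{acmr} by $|\nabla\varphi^\varepsilon|^2$, then identifies the limiting generalized velocity as $v=h+\vec G$.

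The main obstacle is the last identification. Neither $v^\varepsilon d\mu^\varepsilon_t$ nor $h^\varepsilon d\mu^\varepsilon_t$ converges strongly, so one must carefully combine the weak convergence of $-G^\varepsilon\nabla\varphi^\varepsilon$ from the construction of $\vec G$, the varifold-type convergence $(\mathrm{Id}-\nu^\varepsilon\otimes\nu^\varepsilon)\,d\mu^\varepsilon_t\to T_x\mu_t\,d\mu_t$ (which uses both integrality of $\mu_t$ and the vanishing of the discrepancy), and the orthogonality $v^\varepsilon\parallel\nu^\varepsilon$ to conclude both $v\perp T_x\mu_t$ (condition \eqref{velo1}) and the additive decomposition $v=h+\vec G$. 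Justifying these varifold convergences and controlling the $\xi^\varepsilon$-type error terms in the Brakke computation by quantities that vanish on a.e.\ time slice is the technical heart of the argument.
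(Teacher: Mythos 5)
The paper does not actually prove Theorem~\ref{mr}; it states it as an imported result from Mugnai and R\"oger \cite{mugnai-roger2011} (Theorem 3.1) and only remarks afterwards that the restriction $d\in\{2,3\}$ comes from Theorem~\ref{rs} and that the original Neumann boundary condition may be replaced by a periodic one. There is therefore no in-paper proof to compare you against, and your outline is best read as a reconstruction of the original Mugnai--R\"oger argument. As such it is faithful: the dissipation identity obtained by testing \eqref{acmr} against $\varphi^\varepsilon_t$, the Young/Gr\"onwall step giving uniform bounds on $\sup_t\mu^\varepsilon_t(\Omega)$, $\int\varepsilon(\varphi^\varepsilon_t)^2$ and the Willmore-type term, the $C^{1/2}$ equicontinuity of $t\mapsto\mu^\varepsilon_t(\phi)$ upgrading weak-$\ast$ convergence to all $t$, the appeal to Theorem~\ref{rs} (R\"oger--Sch\"atzle) for integrality and vanishing of the discrepancy, the Riesz-representation construction of $\vec G$ from the uniform $L^2(\mu)$-type bound, and the Brakke-type test furnishing \eqref{velo2} are exactly the ingredients there. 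You also correctly isolate the genuine technical point: the pointwise decomposition $v^\varepsilon=h^\varepsilon-G^\varepsilon\nabla\varphi^\varepsilon/(\varepsilon|\nabla\varphi^\varepsilon|^2)$ does not pass to the limit term by term, and establishing $v=h+\vec G$ together with \eqref{velo1} requires varifold convergence $(\mathrm{Id}-\nu^\varepsilon\otimes\nu^\varepsilon)\,d\mu^\varepsilon_t\rightharpoonup T_x\mu_t\,d\mu_t$ (relying on integrality and the vanishing discrepancy) plus control of the $\xi^\varepsilon$-error terms; this is indeed the heart of the Mugnai--R\"oger proof. One immaterial slip: in your Cauchy--Schwarz bound for $\vec G$ the constant should be $2/\sigma$, not $1/\sigma$, since $\varepsilon|\nabla\varphi^\varepsilon|^2\,dx\le 2\sigma\,d\mu^\varepsilon_t$.
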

\begin{remark}

\noindent
\begin{enumerate}
\item The assumption for $d$ comes from Theorem \ref{rs}. 
\item The boundary conditions of \eqref{acmr} of the original theorem are Neumann conditions. However, we may also obtain same results for periodic boundary conditions, with minor modification of the proof (see \cite[Remark 2.3]{mugnai-roger2008}). 
\end{enumerate}
\end{remark}

\bigskip

\noindent
\textbf{Acknowledgments}

\bigskip
\noindent
This work was supported by JSPS KAKENHI
Grant Numbers JP16K17622, JP18H03670, 
and JSPS Leading Initiative for Excellent Young Researchers(LEADER) operated by Funds for the Development of Human Resources in Science and Technology.

\end{document}